\definecolor{webcolor}{rgb}{0,0,1}
\definecolor{webbrown}{rgb}{.6,0,0}
\def\ord{\mathop{\mathrm {ord}}\nolimits}
\newcommand{\Z}{\mathbf{Z}} 
\newcommand{\Q}{\mathbf{Q}}
\DeclareMathOperator{\Spec}{Spec}
\theoremstyle{definition}
\newtheorem{theorem}{Theorem}[section]
\newtheorem{algorithm}[theorem]{Algorithm}
\newtheorem{definition}[theorem]{Definition}
\newtheorem{lemma}[theorem]{Lemma}
\newtheorem{prop}[theorem]{Proposition}
\newtheorem{corollary}[theorem]{Corollary}
\newtheorem{example}[theorem]{Example}
\newtheorem{remark}[theorem]{Remark}
\newtheorem*{theorem*}{Theorem}
\newtheorem*{notation*}{Notation}
\newcommand{\disc}{\mathrm{disc}}
\newcommand{\tr}{\mathrm{tr}}
\newcommand{\End}{\mathrm{End}}
\newcommand{\Ann}{\mathrm{Ann}}
\newcommand{\Ass}{\mathrm{Ass}}
\providecommand{\customgenericname}{}
\newcommand{\newcustomtheorem}[2]{%
  \newenvironment{#1}[1]
      {%
        \renewcommand\customgenericname{#2}%
              \renewcommand\theinnercustomgeneric{##1}%
                 \innercustomgeneric
                   }
                     {\endinnercustomgeneric}
                     }
\title{On the computation of overorders}
\author{Tommy Hofmann}
\address{Tommy Hofmann\\
Fachbereich Mathematik\\
Technische Universität Kaiserslautern\\
67663 Kaiserslautern\\
Germany}
\email{thofmann@mathematik.uni-kl.de}
\urladdr{http://www.mathematik.uni-kl.de/$\sim$thofmann}
\author{Carlo Sircana}
\address{Carlo Sircana\\
Fachbereich Mathematik\\
Technische Universität Kaiserslautern\\
67663 Kaiserslautern\\
Germany}
\email{sircana@mathematik.uni-kl.de}
\subjclass[2000]{11Y40, 11R04}
\keywords{}
\date{\today}
\providecommand\@dotsep{5}
\def\listtodoname{List of Todos}
\def\listoftodos{\@starttoc{tdo}\listtodoname}
\begin{document}

\begin{abstract}
The computation of a maximal order of an order in a semisimple algebra over a global field is
a classical well-studied problem in algorithmic number theory.
In this paper we consider the related problems of computing all minimal overorders as well as all overorders of a given order.
We use techniques from algorithmic representation theory and the theory of minimal integral ring extensions
to obtain efficient and practical algorithms, whose implementation is publicly available.
\end{abstract}

\maketitle


\section{Introduction}

In this paper we are concerned with the following problem from algorithmic algebraic number theory.
Let $R$ be be Dedekind domain which is residually finite, that is, non-trivial quotients of $R$ are finite.
Given an order $\Lambda$ in a semisimple algebra $A$ over the quotient field of $R$,
determine all (minimal) overorders of $\Lambda$.
In purely ring theoretic terms, this is equivalent to finding all (minimal) intermediate rings $\Gamma$ of the ring extension $\Lambda \subseteq A$ such that $\Lambda \subseteq \Gamma$ has finite index.

Since the set of all overorders contains the maximal overorders of $\Lambda$, the problem we consider is related to the computation of one maximal overorder of $\Lambda$, which as a special case also includes the problem of determining a basis of the ring of integers of a number field.
The ring of integers being at the heart of arithmetic questions in number fields, the latter task is among the fundamental problems of algorithmic algebraic number theory as defined by Zassenhaus in~\cite{Zassenhaus1982} and has consequently been the subject of extensive research, see \cite{Pohst1989, Cohen1993, Lenstra1992}.
Adjusting these methods, also the non-commutative case can be handled, see \cite{Ivanyos1993, Friedrichs2000}.

Instead of computing one maximal element in the poset of overorders of $\Lambda$, in this article we will describe how to find the full poset.
While of interest in its own, the computation of overorders is a key tool when computing the so-called ideal class monoid of an order in an étale $\Q$-algebra, see~\cite{Marseglia2018}.
There it also shown that this object can be used to determine representatives of $\Z$-conjugacy classes of integral matrices with given characteristic polynomial.
On the other hand, the ideal class monoid also describes the isomorphism classes of abelian varieties defined
over a finite field belonging to an isogeny class determined by a squarefree Weil polynomial, see~\cite{Marseglia2018b}.
The computation of the endomorphism ring of an ordinary abelian variety $A$ over a finite field is another application from arithmetic geometry. 
In this setting, the endomorphism ring $\End(A)$ is an order in a number field. In dimension $1$ or $2$, it can be efficiently determined by walking through the poset of overorders of $\Z[\pi, \bar \pi]$, where $\pi$ is the Frobenius endomorphism of $A$; see~\cite{Bisson2011, Bisson2015}.
Also an algorithm for computing overorders or intermediate orders respectively, easily translates
into a procedure for computing suborders with prescribed index or conductor.
In particular, this allows for investigation related to conductor ideals of algebraic number fields, see~\cite{Lettl2014, Reinhart2016, Lettl2016}.

We now sketch the basic idea behind the computation of all overorders of an order $\Lambda$ in a semisimple algebra. In the commutative case, the computation reduces to the problem of finding all the intermediate orders between $\Lambda$ and the maximal order $\bar \Lambda$.
For this task, the basic idea is to recover these orders by investigating the finite quotient $\bar \Lambda/\Lambda$.
However, in general a unique maximal order may not exist.
In this case the idea is to find a suitable overorder or $\Lambda$-module $\Gamma$ containing all the minimal overorders of $\Lambda$ and to study the structure of the finite quotient $\Gamma/\Lambda$.
The set of all overorders is found by applying this recursively.
In~\cite{Marseglia2018}, the computation of $\Z$-orders in étale $\Q$-algebras is done by determining all subgroups of the finite abelian group $\bar \Lambda/\Lambda$. The subgroups corresponding to intermediate orders are exactly those which are closed under multiplication.
This approach quickly becomes infeasible if $\lvert \Gamma/\Lambda\rvert$ gets large, mainly because most subgroups will not yield overorders.
In \cite[§III.2.3]{Bisson2011b}, for $\Z$-orders in a number field, a more direct approach based on Gröbner bases over $\Z$
is described.

One of the ingredients of our algorithm is the simple observation that intermediate orders of the ring extension $\Lambda \subseteq \Gamma$ must come from subbimodules of the $(\Lambda, \Lambda)$-bimodule $\Gamma/\Lambda$.
This enables us to use efficient algorithmic tools from representation theory to cut down the number of potential overorders that have to be checked.
The second ingredient is a well-known decomposition of the poset of overorders as a direct product of posets coming from the factorization of the index ideal $[\Gamma : \Lambda] \subseteq R$ into prime ideals of $R$.
We improve upon this in the commutative case of étale algebras by considering the primary decomposition of the conductor ideal $(\Lambda : \Gamma) \subseteq \Lambda$.
This allows us to exploit classical results about minimal extensions of commutative rings and the Gorenstein and Bass properties of orders.

By combining all these ideas, we obtain
a practical algorithm for determining all overorders, that has been implemented in \textsc{Hecke}~\cite{Fieker2017} in the case of $\Z$-orders in semisimple $\Q$-algebras and performs quite well in practice.

The paper is structured as follows. In Section~\ref{sec:gen} we consider the general problem of computing intermediate rings of a ring extension $\Lambda \subseteq \Gamma$. In Section~\ref{sec:intermediate} we apply this to the setting of finitely generated $R$-algebras with $\Gamma/\Lambda$ a torsion $R$-module.
In Section~\ref{sec:overorders} we use this to describe the computation of all overorders in semisimple algebras.
The commutative case of étale algebras is treated in Section~\ref{sec:com}, where we significantly improve upon the general case.
We end in Section~\ref{sec:examples} with various examples.

\subsection*{Acknowledgments}
The authors wish to thank Claus Fieker for several helpful conversations and comments.
The authors were supported by Project II.2 of SFB-TRR 195 `Symbolic Tools in
Mathematics and their Application'
of the German Research Foundation (DFG).

\subsection*{Notation}
For a Dedekind domain $R$ and a finitely generated torsion module $T$ we denote by $\ord(T)$ the order ideal, which
is the product of the elementary divisors of $T$. For two finitely generated $R$-modules $N \subseteq M$ with $M/N$ torsion
we denote by $[ M : N]$ the index ideal of $N$ in $M$, which is defined as the order ideal of $M/N$. See also~\cite[\S{}4.D]{Curtis1990}.

\section{Intermediate rings in a finite index extension}\label{sec:gen}

Let $\Lambda$ be a finitely generated ring, that is, a finitely generated $\Z$-algebra (not necessarily commutative).
Assume further that $\Lambda \subseteq \Lambda_0$ is a ring extension such that $\lvert \Lambda_0/\Lambda\rvert <
\infty$.
Our aim is to find all the intermediate rings of this extension.
As $\Lambda_0/\Lambda$ is a finite abelian group, we can compute all subgroups of $\Lambda_0/\Lambda$ using the techniques of~\cite{Butler1994} and remove afterwards all abelian subgroups that are
not closed under multiplication.
This idea is used by in \cite{Marseglia2018} for computing overorders in étale $\Q$-algebras.
As the number of subgroups grows very fast with the order of $\Lambda_0/\Lambda$ and almost no subgroup corresponds to intermediate
rings, this approach quickly becomes very inefficient, as we show in the examples in Section~\ref{sec:examples}.
We now describe a different method to solve this task, which makes use of the following characterization of the intermediate rings.

\begin{lemma}
  A set $\Gamma$ with $\Lambda \subseteq \Gamma \subseteq \Lambda_0$ is a subring of $\Lambda_0$ if and only if
  $\Gamma$ is a $(\Lambda, \Lambda)$-subbimodule of $\Lambda_0$ such that $\Gamma \cdot \Gamma \subseteq \Gamma$.
\end{lemma}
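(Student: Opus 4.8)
The plan is to prove both implications directly from the definitions, since the statement is essentially an unpacking of what it means to be a subring versus a subbimodule. The only subtlety is that $\Gamma$ is not assumed a priori to be an additive subgroup in the ``if'' direction unless that is read into ``$(\Lambda,\Lambda)$-subbimodule'', so I would first clarify that a $(\Lambda,\Lambda)$-subbimodule of $\Lambda_0$ is in particular an additive subgroup on which $\Lambda$ acts on both sides by the multiplication of $\Lambda_0$.

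For the forward direction, suppose $\Gamma$ with $\Lambda \subseteq \Gamma \subseteq \Lambda_0$ is a subring of $\Lambda_0$. Then $\Gamma$ is an additive subgroup and closed under multiplication, so certainly $\Gamma \cdot \Gamma \subseteq \Gamma$. Moreover, since $\Lambda \subseteq \Gamma$ and $\Gamma$ is closed under the multiplication of $\Lambda_0$, for every $\lambda \in \Lambda$ and $\gamma \in \Gamma$ we have $\lambda \gamma \in \Gamma$ and $\gamma \lambda \in \Gamma$; together with additive closure this says precisely that $\Gamma$ is a $(\Lambda,\Lambda)$-subbimodule of $\Lambda_0$.

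For the converse, suppose $\Gamma$ is a $(\Lambda,\Lambda)$-subbimodule of $\Lambda_0$ with $\Lambda \subseteq \Gamma$ and $\Gamma \cdot \Gamma \subseteq \Gamma$. Being a subbimodule, $\Gamma$ is an additive subgroup of $\Lambda_0$; the hypothesis $\Gamma \cdot \Gamma \subseteq \Gamma$ gives closure under multiplication; and since $\Lambda \subseteq \Gamma$, the identity $1 \in \Lambda$ lies in $\Gamma$. Hence $\Gamma$ is a subring of $\Lambda_0$ containing $\Lambda$. I do not anticipate a genuine obstacle here — the content of the lemma is conceptual rather than technical, namely the observation that ``closed under left and right multiplication by all of $\Lambda_0$'' can be weakened to ``closed under left and right multiplication by $\Lambda$ (a subbimodule condition) plus closed under multiplication by itself.'' The mild point worth stating explicitly is that the bimodule condition and $\Lambda \subseteq \Gamma$ together already force $1 \in \Gamma$, so no separate unitality check is needed.
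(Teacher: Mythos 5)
Your proof is correct. The paper omits the proof of this lemma, treating it as an immediate consequence of the definitions, and your argument is exactly the routine unpacking that is implicitly intended, including the observation that unitality of $\Gamma$ comes for free from $\Lambda \subseteq \Gamma$.
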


Therefore, the first task we have to solve is the computation of the $(\Lambda, \Lambda)$-subbimodules of $\Lambda_0$ containing $\Lambda$. Of course, they are in correspondence with the $(\Lambda, \Lambda)$-subbimodules of $\Lambda_0/\Lambda$, which is a finite group by assumption.
In order to find the subbimodules of $\Lambda_0/\Lambda$, we notice that submodules are by definition invariant under the action of $\Lambda$.
\begin{lemma}
  Let $r_1,\dotsc,r_s \in \Lambda$ be a set of generators of $\Lambda$ as a $\Z$-algebra. Let $M$ be a subgroup of $\Lambda_0/\Lambda$. Then $M$ is a $(\Lambda, \Lambda)$-subbimodule if an only if $r_i\cdot M\subseteq M$ and $M \cdot r_i \subseteq M$ for all $i = 1, \ldots, s$.
\end{lemma}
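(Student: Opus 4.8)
The plan is to prove the two directions separately, with the forward direction (necessity) being essentially trivial and the reverse direction (sufficiency) requiring a short multiplicative-closure argument. Throughout, write $G = \Lambda_0/\Lambda$ and let $\pi \colon \Lambda_0 \to G$ denote the quotient map; the correspondence $\Gamma \mapsto \Gamma/\Lambda$ between subgroups of $\Lambda_0$ containing $\Lambda$ and subgroups of $G$ lets us work interchangeably with a subgroup $M \subseteq G$ and its preimage $\pi^{-1}(M)$.

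For the forward direction, suppose $M$ is a $(\Lambda,\Lambda)$-subbimodule of $G$. By the definition of the bimodule structure, $\lambda \cdot m \in M$ and $m \cdot \lambda \in M$ for \emph{every} $\lambda \in \Lambda$ and $m \in M$; specializing $\lambda = r_i$ gives $r_i \cdot M \subseteq M$ and $M \cdot r_i \subseteq M$ for all $i$, which is exactly what is claimed.

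The substance is the reverse direction. Assume $r_i \cdot M \subseteq M$ and $M \cdot r_i \subseteq M$ for $i = 1,\dots,s$; we must show $\lambda \cdot M \subseteq M$ and $M \cdot \lambda \subseteq M$ for all $\lambda \in \Lambda$. Since $r_1,\dots,r_s$ generate $\Lambda$ as a $\Z$-algebra, every element of $\Lambda$ is a $\Z$-linear combination of $1$ and of words (finite products) in the $r_i$. First note $1 \cdot M = M \subseteq M$. Next, consider the set $S = \{\, \lambda \in \Lambda : \lambda \cdot M \subseteq M \,\}$: it contains $1$ and each $r_i$, it is closed under addition and under negation (because $M$ is a subgroup), and it is closed under multiplication, since if $\lambda, \mu \in S$ then $(\lambda\mu)\cdot M = \lambda \cdot(\mu \cdot M) \subseteq \lambda \cdot M \subseteq M$. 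Hence $S$ is a subring of $\Lambda$ containing all the generators $r_i$ and $1$, so $S = \Lambda$; this proves $\lambda \cdot M \subseteq M$ for all $\lambda \in \Lambda$. The argument for $M \cdot \lambda \subseteq M$ is identical, using the set $S' = \{\,\lambda \in \Lambda : M \cdot \lambda \subseteq M\,\}$ and the associativity identity $M \cdot (\lambda\mu) = (M \cdot \lambda)\cdot \mu$. Combining both inclusions, $M$ is stable under the two-sided $\Lambda$-action, i.e.\ it is a $(\Lambda,\Lambda)$-subbimodule of $G$.

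The only point requiring any care is bookkeeping about what "generated as a $\Z$-algebra" means: one must remember that $1 \in \Lambda$ and that $\Z$-linear combinations are automatically handled because $M$ is a subgroup, so that the verification genuinely reduces to closure of $S$ (resp.\ $S'$) under the ring operations. There is no real obstacle beyond this; the statement is a routine "it suffices to check generators" lemma, and the proof above is the standard closure argument.
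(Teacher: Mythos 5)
Your proof is correct. The paper actually states this lemma without proof, treating it as a routine observation, and your argument is exactly the standard closure argument one would supply: the forward direction by specialization, the reverse direction by noting that the stabilizers $S = \{\lambda \in \Lambda : \lambda M \subseteq M\}$ and $S' = \{\lambda \in \Lambda : M\lambda \subseteq M\}$ are subrings of $\Lambda$ (using that $M$ is a subgroup for additive closure and associativity of the $\Lambda_0$-multiplication for multiplicative closure) that contain $1$ and the generators $r_i$, hence equal $\Lambda$. Nothing is missing; your remark about $\Z$-linear combinations being absorbed by the subgroup hypothesis is the right bookkeeping point.
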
 

Keeping the same notations as in the lemma, we can determine $\varphi_1,\ldots,\varphi_s$, $\psi_1,\ldots,\psi_s \in \End_\Z(\Lambda_0/\Lambda)$ corresponding to the action of the elements $r_1,\dotsc, r_s$ on the left and of the right of $\Lambda_0/\Lambda$ respectively.
Then a subgroup $M \subseteq \Lambda_0/\Lambda$ is a $(\Lambda, \Lambda)$-subbimodule if and only if $M$ is invariant under $\varphi_1,\dotsc,\varphi_l,\psi_1,\dotsc,\psi_l$.
Thus determining the $(\Lambda, \Lambda)$-subbimodules of $\Lambda_0/\Lambda$ is equivalent determining the invariant subgroups of the finite group $\Lambda_0/\Lambda$ with respect to the action of the finite group $\langle \varphi_1,\dotsc,\varphi_s,\psi_1,\dotsc,\psi_s \rangle \subseteq \End_{\Z}(\Lambda_0/\Lambda)$.
In \cite[Section 5]{FHS2019} it is shown how to solve this task using the classical \textsc{MeatAxe}~(\cite{Parker1984, Holt2005}) algorithm. This immediately gives us a method to compute the subbimodules of $\Lambda_0/\Lambda$.
Once we have obtained all the subbimodules of $\Lambda_0/\Lambda$, we can lift them to submodules of $\Lambda_0$. However, we still have to understand which of them are rings. The following lemma gives us an easy criterion to test the ring property on a set of generators.

\begin{lemma}
  Let $M$ be a $(\Lambda$, $\Lambda)$-subbimodule of $\Lambda_0$ with generating set $G$.
   Then $M$ is a ring if and only if $gh \in M$ for every $g, h\in G$.
\end{lemma}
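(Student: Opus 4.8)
The two implications are of very different character, so the plan is to dispose of the trivial one and then reduce the real one to the hypothesis by a bookkeeping argument. The forward implication needs nothing: if $M$ is a ring it is in particular closed under multiplication, and since $G \subseteq M$ we get $gh \in M$ for all $g,h \in G$. So the content is the converse. Assume $gh \in M$ for all $g,h \in G$; since $1 \in \Lambda \subseteq M$ it is enough to show $M \cdot M \subseteq M$. The idea is to write two arbitrary elements of $M$ in terms of the generating set $G$, multiply them out by distributivity, and collapse everything onto the finitely many products $gh$, $g,h\in G$, that we already control.

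If $G$ is a generating set of $M$ as an abelian group — which is how $M$ is produced once it has been written down explicitly — this is immediate: for $x = \sum_i n_i g_i$ and $y = \sum_j m_j h_j$ with $n_i, m_j \in \Z$ and $g_i, h_j \in G$, bilinearity gives $xy = \sum_{i,j} n_i m_j\, g_i h_j$, and since each $g_i h_j$ lies in $M$ and $M$ is closed under $\Z$-linear combinations, $xy \in M$. If instead $G$ generates $M$ only as a one-sided (say left) $\Lambda$-module, the same computation works but now genuinely uses that $M$ is a $(\Lambda,\Lambda)$-bimodule: writing $x = \sum_i \lambda_i g_i$, $y = \sum_j \mu_j h_j$ with $\lambda_i,\mu_j \in \Lambda$, one has $xy = \sum_{i,j}\lambda_i (g_i \mu_j) h_j$, where $g_i\mu_j \in M\Lambda \subseteq M$ may be rewritten over $G$; after pulling the $\Lambda$-coefficients to the left, $xy$ becomes a $\Lambda$-linear combination of elements $g'h'$ with $g',h'\in G$, all of which lie in $M$, so $xy \in \Lambda M = M$. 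Equivalently one can check that $\{x \in \Lambda_0 : xM \subseteq M\}$ is simultaneously a subring and a $(\Lambda,\Lambda)$-subbimodule of $\Lambda_0$; by the hypothesis together with the bimodule property it contains $\Lambda$ and all of $G$, hence it contains $\Lambda G = M$, which is exactly the assertion $M \cdot M \subseteq M$.

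The only delicate point — the place where a careless argument breaks — is the precise meaning of ``generating set''. Reading $G$ as a generating set of $M$ as a $(\Lambda,\Lambda)$-\emph{bi}module, multiplying out two elements of $M$ produces terms of the form $g\lambda g'$ with $g,g'\in G$ and $\lambda\in\Lambda$, and these are not directly controlled by $gh \in M$; unwinding them only reproduces terms of the same shape. I would therefore state and use the lemma with $G$ a generating set of $M$ as an abelian group or as a one-sided $\Lambda$-module — both are available at this stage — for which the reduction to the hypothesis $gh \in M$ is clean, the bimodule structure of $M$ being precisely what is needed to absorb the $\Lambda$-scalars in the one-sided case.
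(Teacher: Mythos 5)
The paper states this lemma without a proof, so there is no argument in the source to compare against; it is evidently treated as an easy observation. Your proof is correct, and moreover you put your finger on the one point that actually requires care and that the paper leaves implicit: the word ``generating set''. In the algorithmic setting in which the lemma is used, $M$ is produced as $\bar M + \Lambda$ where $\bar M$ is a subbimodule of the finite abelian group $\Lambda_0/\Lambda$, so $G$ is in practice a $\Z$-module (abelian group) generating set. With that reading your first argument --- bilinearity of multiplication plus closure of $M$ under $\Z$-linear combinations --- is exactly the intended one-line proof, and the forward direction is trivial, as you say.

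Your observation that the statement would be delicate if $G$ were read as a bimodule generating set is genuinely useful. In that case a product of two elements of $M = \sum_i \Lambda g_i \Lambda$ produces terms of the shape $\lambda g (\mu\nu) g' \xi$ with a $\Lambda$-scalar sandwiched between two generators, and the hypothesis $gh \in M$ says nothing about $g\lambda g'$; the argument does not close up, and the bimodule property cannot absorb a middle factor when $\Lambda$ is noncommutative. Your resolution --- read $G$ as an abelian-group or one-sided $\Lambda$-module generating set, in which case both bimodule actions are genuinely used: one side to absorb the interior scalar ($g_i\mu_j \in M\Lambda \subseteq M$), the other to re-express the result over $G$ and pull coefficients out --- is the right one. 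One small bookkeeping remark on your alternative argument via $N = \{x \in \Lambda_0 : xM \subseteq M\}$: if $G$ is a \emph{left} $\Lambda$-module generating set, the cleanest choice is the other multiplier set $\{x : Mx \subseteq M\}$, since then $Mg = \Lambda G g \subseteq \Lambda M = M$ follows immediately from $Gg \subseteq M$; with your choice of $N$ one has to first rewrite $g\lambda$ over $G$ before concluding $g\lambda g' \in \Lambda M = M$, which works but adds a step. Either way the $N$-argument is sound, and as an addendum it also makes it transparent why the bimodule-generating-set reading fails: $N$ is a subring and a subbimodule, but without one-sidedness there is no way to deduce $G \subseteq N$ from $GG \subseteq M$.
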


This gives us a way of determining all the subrings of the extension.

\begin{algorithm}\label{alg:generic}
  The following steps return all overrings of $\Lambda$ contained in $\Lambda_0$.
  \begin{enumerate}
    \item
      Determine the set $\bar S$ of $(\Lambda, \Lambda)$-subbimodules of $\Lambda_0/\Lambda$.
    \item
      Compute the set of lifts $S = \{ \bar M + \Lambda \mid \bar M \in \bar S\}$.
    \item
      Return the subset of $S$ given by subbimodules that are rings.
  \end{enumerate}
\end{algorithm}

\subsection{Minimal overrings}
Algorithm~\ref{alg:generic} only exploits the $(\Lambda, \Lambda)$-bimodule structure of the overrings. However, every overring $\Gamma$ of $\Lambda$ has the structure of a $(\Gamma_0, \Gamma_0)$-bimodule
for all intermediate rings $\Lambda\subseteq \Gamma_0 \subseteq \Gamma$, as we will see.
In order to exploit this observation, we change our strategy. Instead of listing directly all the subbimodules of $\Lambda_0/\Lambda$, we just find the minimal ones, following the approach of \cite[Section 5]{FHS2019}.
Recall that $\Lambda_0$ is a \textit{minimal ring extension} or a \textit{minimal overring} of $\Lambda$ if $\Lambda_0$ is a ring containing $\Lambda$ and there
is no proper intermediate ring.
To find all minimal intermediate rings, we will make use of the following basic observation.

\begin{lemma}\label{lem:trick}
  Let $\Lambda \subseteq \Gamma \subseteq \Lambda_0$ be an intermediate ring of $\Lambda \subseteq \Lambda_0$. Then the following hold:
  \begin{enumerate}
    \item
      The ring $\Gamma$ is a $(\Gamma_0, \Gamma_0)$-subbimodule of $\Lambda_0$ for any overring $\Gamma_0$ of $\Lambda$ with $\Gamma_0 \subseteq \Gamma$.
    \item
      Assume that $\Gamma$ is a minimal overring of $\Lambda$. Then there exists a minimal $(\Lambda, \Lambda)$-subbimodule $M$ of $\Lambda_0$ with
      $M \subseteq \Gamma$ and $\langle M \rangle = \Gamma$,
      where $\langle M \rangle$ denotes the smallest $\Lambda$-subalgebra of $\Lambda_0$ containing $M$.
  \end{enumerate}
\end{lemma}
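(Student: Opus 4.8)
Part (i) is essentially a direct verification. Let $\Gamma_0$ be an overring of $\Lambda$ with $\Gamma_0 \subseteq \Gamma$. Since $\Gamma$ is itself a ring containing $\Gamma_0$, it is closed under multiplication, so for $x \in \Gamma_0$ and $\gamma \in \Gamma$ we have $x\gamma \in \Gamma$ and $\gamma x \in \Gamma$; together with the fact that $\Gamma$ is an abelian group, this says precisely that $\Gamma$ is a $(\Gamma_0,\Gamma_0)$-subbimodule of $\Lambda_0$. The only thing to note is that $\Gamma \subseteq \Lambda_0$ is required so that the statement makes sense, which is part of the hypothesis. I would write this out in one or two sentences.

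For part (ii), the plan is as follows. Assume $\Gamma$ is a minimal overring of $\Lambda$, so $\Gamma \supsetneq \Lambda$ and there is no intermediate ring strictly between. Since $\Gamma \neq \Lambda$, the quotient $\Gamma/\Lambda$ is a nonzero finite $(\Lambda,\Lambda)$-bimodule, hence it contains a minimal nonzero subbimodule $\bar M$; lifting $\bar M$ back gives a $(\Lambda,\Lambda)$-subbimodule $M$ of $\Lambda_0$ with $\Lambda \subsetneq M \subseteq \Gamma$ which is minimal among subbimodules properly containing $\Lambda$ (equivalently, minimal in the sense used in \cite[Section 5]{FHS2019}). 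Now consider $\langle M\rangle$, the smallest $\Lambda$-subalgebra of $\Lambda_0$ containing $M$. On the one hand, $\langle M\rangle \subseteq \Gamma$ because $\Gamma$ is a ring containing $M$. On the other hand, $\langle M\rangle$ is an intermediate ring of $\Lambda \subseteq \Lambda_0$ with $\Lambda \subsetneq M \subseteq \langle M\rangle$, so $\langle M\rangle \neq \Lambda$; by minimality of $\Gamma$ we conclude $\langle M\rangle = \Gamma$.

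The main subtlety to get right is the precise meaning of "minimal $(\Lambda,\Lambda)$-subbimodule $M$ of $\Lambda_0$." Since $\Lambda$ itself is a subbimodule and every subbimodule under consideration contains $\Lambda$, the word "minimal" must refer to minimality among subbimodules \emph{properly containing} $\Lambda$ (i.e., $M/\Lambda$ is a minimal subbimodule of $\Lambda_0/\Lambda$); I would state this convention explicitly, perhaps with a parenthetical remark, to avoid any ambiguity, and then the existence of such an $M$ inside $\Gamma$ follows from the finiteness of $\Gamma/\Lambda$. Everything else is formal: the only nontrivial content is the interplay between the bimodule-generated object $M$ and the algebra-generated object $\langle M\rangle$, and minimality of $\Gamma$ forces these two containments to collapse to an equality. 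I do not expect any real obstacle here; the lemma is a bookkeeping device that sets up the recursive strategy for enumerating minimal overrings described in the following sections.
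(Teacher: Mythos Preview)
Your proof is correct and follows essentially the same approach as the paper: part~(i) is declared clear, and part~(ii) uses finiteness of $\Gamma/\Lambda$ to produce a minimal subbimodule $M$ inside $\Gamma$, then the minimality of $\Gamma$ forces $\langle M\rangle = \Gamma$. Your explicit remark on the intended meaning of ``minimal subbimodule'' (i.e., minimal among those properly containing $\Lambda$) is a useful clarification that the paper leaves implicit.
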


\begin{proof}
  (i) is clear. (ii) follows from the fact that a minimal $(\Lambda, \Lambda)$-subbimodule $M$ of $\Lambda_0$ contained in $\Gamma$ exists since $\Gamma/\Lambda$ is finite and $\Gamma$ is itself a $(\Lambda, \Lambda)$-bimodule.
  The minimality of $\Gamma$ implies $\langle M \rangle = \Gamma$.
\end{proof}

This immediately translates into an algorithm for computing minimal overrings.

\begin{algorithm}\label{alg:minimal}
  The following steps return all minimal overrings of $\Lambda$ contained in $\Lambda_0$.
  \begin{enumerate}
    \item\label{step:hard}
      Determine the set $\bar S$ of minimal $(\Lambda, \Lambda)$-subbimodules of $\Lambda_0/\Lambda$.
    \item
      Compute $S = \{ \bar M + \Lambda \mid \bar M \in \bar S\}$.
    \item
      Return the minimal (with respect to inclusion) elements of $\{ \langle M \rangle \mid M \in S\}$.
  \end{enumerate}
\end{algorithm}

Applying the algorithm recursively to compute minimal overrings, we can easily compute all intermediate rings.

\begin{algorithm}\label{alg:intermediate}
  Given two finitely generated $\Z$-algebras $\Lambda \subseteq \Lambda_0$ such that $\lvert \Lambda_0/\Lambda\rvert < \infty$,
  the following steps return all intermediate rings of $\Lambda \subseteq \Lambda_0$.
  \begin{enumerate}
    \item
      Set $\mathcal T = \mathcal S = \{ \Lambda \}$.
    \item
      While $\mathcal S \neq \emptyset$ do the following:
      \begin{enumerate}
        \item
          Remove an element $\Gamma$ of $\mathcal S$.
          Use Algorithm~\ref{alg:minimal} to determine the set $\mathcal N$ of all minimal overrings of $\Gamma$ contained in $\Lambda_0$.
        \item
          Replace $\mathcal T$ by $\mathcal T \cup \mathcal N$ and $\mathcal S$ by $\mathcal S \cup \mathcal N$.
      \end{enumerate}
    \item
      Return $\mathcal T$.
  \end{enumerate}
\end{algorithm}

This algorithm is the basic version of the algorithms that we will develop in the next sections.
We end this section with a general remark on the bottleneck and possible improvements of Algorithm~\ref{alg:intermediate}.

\begin{remark}\label{rem:product}
A major bottleneck of Algorithm~\ref{alg:intermediate} comes from the fact that an overring $\Gamma \subseteq \Lambda_0$ of $\Lambda$ corresponds in general to a minimal extension
of more than one overring of $\Lambda$.
Due to the recursive nature of the approach, this means that $\Gamma$ will be unnecessarily recomputed during the course of the algorithm.
One way to decrease the number of unnecessary overring computations is by exploiting additional structure of the poset
of intermediate rings under consideration.
To sketch the underlying idea, we consider the problem of computing intermediate rings in the setting of posets.
Let us call a pair $p \lneq q$ of poset elements \textit{minimal}, if there is no element $s$ such that $p \lneq s \lneq q$.
Thus Algorithm~\ref{alg:intermediate} just describes the traversal of the poset $P$ of intermediate rings by successively computing
minimal pairs and a major bottleneck is that for an element $q \in P$ there exist in general more than one element $p$ with $(p, q)$ minimal.

Assume now that $(P, \leq)$ admits a decomposition $P = P_1 \times P_2$ as the cartesian product of two posets $(P_i, \leq)$ and
that $q = (q_1, q_2) \in P$ is an element.
Now if $p_1 \lneq q_1$, $p_2 \lneq q_2$ are minimal in $P_1$ and $P_2$ respectively,
then both $(p_1, q_2) \lneq (q_1,q_2) = q$ and $(q_1, p_2) \lneq (q_1, q_2) = q$ will be minimal in $P$.
Thus the existence of a non-trivial product decomposition of $P$ implies that there must be redundant computations when passing through $q =(q_1, q_2)$ during the traverse of $P$. On the other hand, this will not happen when passing through $q_1$ and $q_2$ during the traverse of $P_1$ and $P_2$ respectively.
Consider for example, the following poset $P$ with a product decomposition $P = P_1 \times P_2$:
\begin{center}
\begin{tikzpicture}[scale=0.8, every node/.style={scale=0.8}]
  \node (00) at (0,0) {$\cdot$};
  \node (-11) at (-1, 1) {$\cdot$};
  \node (-22) at (-2, 2) {$\cdot$};
  \node (-13) at (-1, 3) {$\cdot$};
  \node (04) at (0, 4) {$\cdot$};
  \node (02) at (0, 2) {$\cdot$};
  \node  at (0.3, 2) {$q$};
  \node (11) at (1, 1) {$\cdot$};
  \node (13) at (1, 3) {$\cdot$};
  \node (22) at (2, 2) {$\cdot$};
  \draw (00) -- (-11) -- (-22) -- (-13) -- (02) -- (11) -- (22) -- (13) -- (04);
  \draw (00) -- (11);
  \draw (-11) -- (02) -- (13);
  \draw (-13) -- (04);
  \node at (4, 2) {$=$};
  \node  at (0, -0.5) {$P$};
  
  \node (a1) at (6,0) {$\cdot$};
  \node (a2) at (6,2) {$\cdot$};
  \node  at (6.3,2) {$q_1$};
  \node (a3) at (6,4) {$\cdot$};
  \draw (a1) -- (a2) -- (a3);
  
  \node  at (6, -0.5) {$P_1$};
  
  \node at (8, 2) {$\times$};
  
  \node (b1) at (10,0) {$\cdot$};
  \node (b2) at (10,2) {$\cdot$};
  \node  at (10.3,2) {$q_2$};
  \node (b3) at (10,4) {$\cdot$};
  \draw (b1) -- (b2) -- (b3);
  \node  at (10, -0.5) {$P_2$};
\end{tikzpicture}
\end{center}
It is clear that when traversing $P$, the element $q$ is visited twice, while this will not happen with $q_1$ or $q_2$ when traversing $P_1$ and $P_2$ respectively.
To avoid redundant computations, it is therefore preferable to first traverse $P_1$ and $P_2$ and then to compute $P = P_1 \times P_2$.
In particular, in view of this behavior of minimal elements in $P$ with respect to cartesian product decompositions, it is desirable
to find as many non-trivial decomposition of $P$ as possible.
\end{remark}


\section{Intermediate rings of algebras over Dedekind domains}\label{sec:intermediate}

In this section, $R$ will always be a Dedekind domain which we assume to be residually finite, that is, the quotient of $R$ by any non-zero ideal is finite. 
Let $\Lambda \subseteq \Lambda_0$ be an extension of finitely generated $R$-algebras (not necessarily commutative) such that
$\Lambda_0 /\Lambda$ is a torsion $R$-module (that is, $\Lambda_0/\Lambda$ is finite since $R$ is residually finite).
We will show how to improve Algorithm~\ref{alg:intermediate} by exploiting a cartesian product decomposition of the poset of intermediate rings coming
from the prime ideal decomposition of the index ideal $[\Lambda_0 : \Lambda]$ (see Remark~\ref{rem:product}).

We will need a few basic results about torsion modules over Dedekind domains.
If $T$ is a module over a Dedekind domain $R$ and $\mathfrak p$ a prime ideal of $R$, we denote by
$T[\mathfrak p^\infty] = \{ x \in T \mid \Ann_R(x) \text{ is a $\mathfrak p$-power}\}$ the \textit{$\mathfrak p$-primary part} of $T$.

\begin{lemma}\label{lem:dedtor}
  If $T$ is a torsion $R$-module, then the following hold:
  \begin{enumerate}
    \item
      We have $T[\mathfrak p^\infty] \neq \{ 0 \}$ if and only of $\mathfrak p$ divides the order ideal $\ord(T)$.
      In particular $T[\mathfrak p^\infty] = \{ 0 \}$ for almost all non-zero prime ideals $\mathfrak p$ of $R$.
    \item We have $T = \bigoplus_{\mathfrak p \in \Spec(R)} T[\mathfrak p^\infty]$.
  \end{enumerate}
\end{lemma}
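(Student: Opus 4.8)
The plan is to reduce everything to the annihilator ideal and the Chinese remainder theorem. Since the order ideal $\ord(T)$ is only defined for a finitely generated torsion module, I take $T$ to be finitely generated (as it is in all our applications, where it is even finite), so that $\mathfrak{a} := \Ann_R(T)$ is a nonzero ideal of $R$. A preliminary observation I would record first: for a prime $\mathfrak{p}$ of $R$, an element $x \in T$ lies in $T[\mathfrak{p}^\infty]$ exactly when $\mathfrak{p}^n x = 0$ for some $n \ge 0$. Indeed, in the Dedekind domain $R$ the only ideals containing $\mathfrak{p}^n$ are the powers $\mathfrak{p}^m$ with $0 \le m \le n$, so $\mathfrak{p}^n \subseteq \Ann_R(x)$ already forces $\Ann_R(x)$ to be a $\mathfrak{p}$-power; in particular $T[\mathfrak{p}^\infty]$ is an $R$-submodule of $T$.

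Next I would factor $\mathfrak{a} = \prod_{i=1}^k \mathfrak{p}_i^{e_i}$ into distinct prime powers. The ideals $\mathfrak{p}_i^{e_i}$ are pairwise comaximal, so the Chinese remainder theorem gives a ring isomorphism $R/\mathfrak{a} \cong \prod_{i=1}^k R/\mathfrak{p}_i^{e_i}$ together with a complete orthogonal system of idempotents $\varepsilon_1, \dots, \varepsilon_k \in R/\mathfrak{a}$ summing to $1$. Since $\mathfrak{a}$ annihilates $T$, the module $T$ is an $R/\mathfrak{a}$-module, hence decomposes as $T = \bigoplus_{i=1}^k \varepsilon_i T$. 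The core of the argument is then to identify $\varepsilon_i T = T[\mathfrak{p}_i^\infty]$ and to check that $T[\mathfrak{q}^\infty] = 0$ for every prime $\mathfrak{q} \notin \{\mathfrak{p}_1, \dots, \mathfrak{p}_k\}$. The inclusion $\varepsilon_i T \subseteq T[\mathfrak{p}_i^\infty]$ holds because $\mathfrak{p}_i^{e_i}\varepsilon_i = 0$ in $R/\mathfrak{a}$; for the reverse inclusion and for the vanishing of the remaining primary parts I would use comaximality: if $\mathfrak{p}_j^n x = 0$ with $j \ne i$ then $\Ann_R(\varepsilon_i x)$ contains $\mathfrak{p}_i^{e_i} + \mathfrak{p}_j^n = R$, so $\varepsilon_i x = 0$; and if $\mathfrak{q}^n x = 0$ with $\mathfrak{q} \nmid \mathfrak{a}$ then $\Ann_R(x)$ contains $\mathfrak{a} + \mathfrak{q}^n = R$, so $x = 0$. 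Finally $\varepsilon_i T \ne 0$ for each $i$, since otherwise $T$ would be annihilated by the proper divisor $\prod_{j \ne i}\mathfrak{p}_j^{e_j}$ of $\mathfrak{a}$. This gives statement (ii) and shows $T[\mathfrak{p}^\infty] \ne \{0\}$ if and only if $\mathfrak{p} \mid \Ann_R(T)$.

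To finish (i) it remains to observe that $\Ann_R(T)$ and $\ord(T)$ have the same prime divisors. Using the elementary divisor decomposition $T \cong \bigoplus_j R/\mathfrak{d}_j$ (see \cite[\S 4.D]{Curtis1990}), one has $\ord(T) = \prod_j \mathfrak{d}_j$ and $\Ann_R(T) = \bigcap_j \mathfrak{d}_j$, and a prime $\mathfrak{p}$ divides the former if and only if it contains some $\mathfrak{d}_j$ if and only if it divides the latter. Since $T$ is finitely generated, $\ord(T)$ is a nonzero ideal, hence has only finitely many prime divisors, which gives the ``almost all'' assertion.

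The whole thing is essentially bookkeeping, and I do not expect a genuine obstacle; the one place where a little care is needed is the precise identification $\varepsilon_i T = T[\mathfrak{p}_i^\infty]$ and the correct handling of comaximality. As an alternative, one could bypass the idempotent argument entirely and read the lemma off the structure theorem for finitely generated modules over a Dedekind domain by applying the Chinese remainder theorem to each cyclic summand $R/\mathfrak{d}_j$.
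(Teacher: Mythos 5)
Your argument is correct. The paper itself does not prove this lemma but simply cites it as the primary decomposition of modules over Dedekind domains, referring to \cite[6.3.15 Proposition]{Berrick2000}; your CRT-plus-idempotents computation is a clean, self-contained proof of that cited fact, and the final step relating the prime divisors of $\Ann_R(T)$ to those of $\ord(T)$ via the elementary divisor decomposition is exactly what is needed to get part (i) in the form stated. One small point of scope: you restrict to finitely generated $T$, which is the right setting for (i), since $\ord(T)$ is only defined there, and which covers every use of the lemma in the paper (where $T$ is even finite). Part (ii) as stated, however, holds for arbitrary torsion $R$-modules; that more general version also follows from your method applied element by element, since for each $x\in T$ the cyclic submodule $Rx$ already has nonzero annihilator $\mathfrak a_x$, the CRT decomposition of $R/\mathfrak a_x$ splits $x$ into its primary components, and the same comaximality computations show that the sum $\sum_{\mathfrak p} T[\mathfrak p^\infty]$ is direct and exhausts $T$.
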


\begin{proof}
  This is the primary decomposition of modules over Dedekind domains, see~\cite[6.3.15 Proposition]{Berrick2000}.
\end{proof}

\begin{definition}
  Let $\Lambda \subseteq \Gamma$ be an extension of finite $R$-algebras with $\Lambda/\Gamma$ torsion. We call $\Gamma$ a \textit{$\mathfrak p$-overring} if the index ideal $[\Gamma : \Lambda]$ is a $\mathfrak p$-power.
\end{definition}

We now show that the poset of overorders of $\Lambda$ contained in $\Lambda_0$ is the product of $r$ non-trivial posets, where $r$ is the number of prime ideals dividing $[ \Lambda_0 : \Lambda]$.

\begin{prop}\label{prop:splitded}
  Let $\Lambda \subseteq \Gamma$ be an extension of finite $R$-algebras and $\mathfrak p_1,\dotsc,\mathfrak p_r$ prime ideals containing the prime ideal divisors of $[\Gamma : \Lambda]$.
  Then there exist
  unique $\mathfrak p_i$-overrings $\Gamma_i$ of $\Lambda$ such that
  $\Gamma = \Gamma_1 + \Gamma_2 + \dotsb + \Gamma_r$.
  More precisely, $\Gamma_i = \{ x \in \Gamma \mid \mathfrak p_i^k x \subseteq \Lambda \text{ for some $k$}\}$.
\end{prop}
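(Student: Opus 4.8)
The plan is to view $\Gamma/\Lambda$ as a torsion $R$-module and apply its primary decomposition (Lemma~\ref{lem:dedtor}). Write $T = \Gamma/\Lambda$. Since $[\Gamma : \Lambda] = \ord(T)$, the only prime ideals $\mathfrak p$ with $T[\mathfrak p^\infty] \neq \{0\}$ are the prime divisors of $[\Gamma : \Lambda]$, hence all occur among $\mathfrak p_1, \dotsc, \mathfrak p_r$; thus $T = \bigoplus_{i=1}^r T[\mathfrak p_i^\infty]$. First I would define $\Gamma_i$ to be the preimage in $\Gamma$ of $T[\mathfrak p_i^\infty]$ under the projection $\Gamma \to \Gamma/\Lambda = T$; unwinding the definition of the $\mathfrak p_i$-primary part, this is exactly $\Gamma_i = \{ x \in \Gamma \mid \mathfrak p_i^k x \subseteq \Lambda \text{ for some } k \}$, since $\Ann_R(x + \Lambda)$ being a $\mathfrak p_i$-power is the same as $\mathfrak p_i^k(x+\Lambda) = 0$ in $T$ for some $k$. (One should note $\mathfrak p_i^k x \subseteq \Lambda$ rather than $\mathfrak p_i^k x = 0$, but the latter makes no sense in $\Gamma$; the statement in the proposition uses the containment, which matches the annihilator condition on the class.)

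The core of the argument is then to check that each $\Gamma_i$ is a subring of $\Gamma$ containing $\Lambda$, i.e. an overring, and that it is a $\mathfrak p_i$-overring. It clearly contains $\Lambda$ (take $k = 0$) and is an $R$-submodule. For the ring property: if $x, y \in \Gamma_i$ with $\mathfrak p_i^{k} x \subseteq \Lambda$ and $\mathfrak p_i^{\ell} y \subseteq \Lambda$, then since $\Lambda \subseteq \Gamma$ is a ring extension and $\Lambda$ absorbs multiplication, $\mathfrak p_i^{k+\ell}(xy) \subseteq (\mathfrak p_i^k x)(\mathfrak p_i^\ell y) \subseteq \Lambda \cdot \Lambda \subseteq \Lambda$, so $xy \in \Gamma_i$; similarly $x + y \in \Gamma_i$. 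Hence $\Gamma_i$ is an overring. Its index ideal is $[\Gamma_i : \Lambda] = \ord(T[\mathfrak p_i^\infty])$, which is a power of $\mathfrak p_i$ by Lemma~\ref{lem:dedtor}(i), so $\Gamma_i$ is a $\mathfrak p_i$-overring.

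Next I would verify $\Gamma = \Gamma_1 + \dotsb + \Gamma_r$. Given $x \in \Gamma$, its class $x + \Lambda \in T$ decomposes as $\sum_i t_i$ with $t_i \in T[\mathfrak p_i^\infty]$; lifting each $t_i$ to some $x_i \in \Gamma_i$, we get $x - \sum_i x_i \in \Lambda \subseteq \Gamma_1$, so $x \in \sum_i \Gamma_i$. For uniqueness: suppose $\Gamma = \Gamma_1' + \dotsb + \Gamma_r'$ with each $\Gamma_i'$ a $\mathfrak p_i$-overring of $\Lambda$. Then the image $\overline{\Gamma_i'}$ of $\Gamma_i'$ in $T$ is a submodule annihilated by a power of $\mathfrak p_i$ (since $[\Gamma_i' : \Lambda]$ is a $\mathfrak p_i$-power forces $\mathfrak p_i^k \Gamma_i' \subseteq \Lambda$ for suitable $k$), hence $\overline{\Gamma_i'} \subseteq T[\mathfrak p_i^\infty]$; and summing these submodules gives all of $T$, which by the directness of $T = \bigoplus_i T[\mathfrak p_i^\infty]$ forces $\overline{\Gamma_i'} = T[\mathfrak p_i^\infty]$ for every $i$, i.e. $\Gamma_i' = \Gamma_i$.

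The main obstacle is largely bookkeeping rather than conceptual: one must be careful that the annihilator condition defining the $\mathfrak p$-primary part translates correctly into the containment condition $\mathfrak p_i^k x \subseteq \Lambda$ at the level of $\Gamma$ (not $= 0$), and that in the uniqueness step "$[\Gamma_i' : \Lambda]$ is a $\mathfrak p_i$-power" genuinely yields $\mathfrak p_i^k \Gamma_i' \subseteq \Lambda$ for a single uniform $k$ — this uses that $\Gamma_i'/\Lambda$ is a finitely generated torsion $R$-module whose order ideal is a $\mathfrak p_i$-power, so it is annihilated by a power of $\mathfrak p_i$. Everything else follows formally from Lemma~\ref{lem:dedtor} and the fact that $\Lambda$ is a subring of $\Gamma$.
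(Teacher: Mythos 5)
Your proposal is correct and follows essentially the same route as the paper: apply the primary decomposition of the torsion module $\Gamma/\Lambda$ (Lemma~\ref{lem:dedtor}), define $\Gamma_i$ as the preimage of the $\mathfrak p_i$-primary part, and read off the sum decomposition. You are in fact more thorough than the paper's own proof, which asserts without detail that $\Gamma_i$ is a ring and never explicitly addresses uniqueness; your verification of multiplicative closure (using that $R$ is central so $\mathfrak p_i^{k+\ell} xy$ lands in $\Lambda$) and your uniqueness argument via the directness of $\bigoplus_i T[\mathfrak p_i^\infty]$ fill both gaps.
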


\begin{proof}
  For $i \in \{1,\dotsc,r\}$ let $\Gamma_i$ be the preimage of $(\Gamma /\Lambda)[\mathfrak p_i^\infty]$ under the canonical projection $\Gamma \to (\Gamma /\Lambda)$.
  Then $\Lambda \subseteq \Gamma_i \subseteq \Gamma$ and by definition
  $\Gamma_i = \{ x \in \Gamma \mid \mathfrak p^k x \subseteq \Lambda \text{ for
  some $k$}\}$. 
  This also shows that $\Gamma_i$ is a ring, that is, an overorder of $\Lambda$. 
  Now $\Gamma = \Gamma_1 + \Gamma_2 + \dotsb + \Gamma_r$ follows
  from Lemma~\ref{lem:dedtor}, since $(\Gamma/\Lambda)[\mathfrak q^\infty] = \{ 0 \}$ for prime ideals $\mathfrak q \neq \mathfrak p_i$.
\end{proof}


Thus when computing overrings $\Gamma$ of $\Lambda$ contained in $\Lambda_0$ by Proposition~\ref{prop:splitded} it is sufficient to determine the intermediate rings that are $\mathfrak p$-overrings for each $\mathfrak p$ dividing $[\Lambda_0 : \Lambda]$.
Moreover, any $\mathfrak p$-overring $\Gamma$ of $\Lambda$ with $\Gamma \subseteq \Lambda_0$ is contained in $\{ x \in \Lambda_0 \mid \mathfrak p^k x \in \Gamma \text{ for
  some $k$}\}$, which is the preimage of the canonical projection
$\Lambda \to (\Lambda_0/\Lambda)[\mathfrak p^\infty]$.
We thus obtain the following algorithm.

\begin{algorithm}\label{alg:intorders}
  Given two $R$-algebras $\Lambda \subseteq \Lambda_0$ the following steps return the intermediate rings of $\Lambda \subseteq \Lambda_0$.
  \begin{enumerate}
    \item
      Determine the prime ideal factors $\mathfrak p_1,\dotsc,\mathfrak p_r$ of $[\Lambda_0 : \Lambda]$.
    \item
      For each $1 \leq i \leq r$, do the following:
      \begin{enumerate}
        \item
          Compute the preimage $\Lambda_i$ of $(\Lambda_0/\Lambda)[\mathfrak p_i^\infty]$ under the canonical projection $\Lambda_0 \to \Lambda_0/\Lambda$.
        \item
          Determine the set $S_{i}$ of rings $\Gamma_i$ with $\Lambda \subseteq \Gamma_i \subseteq \Lambda_i$ using Algorithm~\ref{alg:intermediate}.
      \end{enumerate}
    \item
      Return $\sum_{1 \leq i \leq r} \Gamma_i$, where $(\Gamma_1,\dotsc,\Gamma_r)$ runs through all elements of $S_{1} \times \dotsb \times S_{r}$.
  \end{enumerate}
\end{algorithm}

\section{Computing overorders in semisimple algebras}\label{sec:overorders}

We now consider the computation of all overorders of a given order in semisimple algebras.
For further background on lattices and orders, we refer the reader to~\cite{Reiner2003}.
We let $R$ be a Dedekind domain and denote by $K$ its field of fractions.
Recall that an $R$-algebra $\Lambda$ is called an \textit{$R$-order} (or just \textit{order}), if
$\Lambda$ is a finitely generated, projective $R$-module.
We call $\Lambda$ an \textit{$R$-order of $A$}, if $\Lambda$ is a subring of $A$ and $\Lambda$ contains a $K$-basis of $A$.
An $R$-order $\Lambda$ of $A$ is \textit{maximal} if it is not properly contained in any $R$-order of $A$ and \textit{maximal at $\mathfrak p$}, if
$\Lambda_\mathfrak p$ is a maximal $R_\mathfrak p$-order of $A$.
An overorder $\Gamma$ of $\Lambda$ is called a $\mathfrak p$-overorder, if $\Gamma$ is a $\mathfrak p$-overring of $\Lambda$.
Note that since orders of $A$ have the same rank as $R$-modules, the overrings of $\Lambda$ contained in $A$ with finite index are exactly the overorders.

\subsection{Overorders in semisimple algebras}

Denote by $\tr \colon A \to K$ the reduced trace of $A$.
By $\disc(\Lambda)$ we denote the \textit{discriminant} of $\Lambda$, which is the non-zero ideal of $R$ generated by the elements
\[ \{ \det(\tr(x_i x_j)_{1 \leq i, j \leq n}) \mid m \in \Z_{>0}, x_1,\dotsc,x_m \in \Lambda \}. \]
Notice that, as $A$ is semisimple, the bilinear form induced by $\tr$ is non-degenerate and therefore $\disc(\Lambda)$ is always a non-zero ideal of $R$.

\begin{prop}\label{lem:index}
  Let $\Lambda$ be an order of $A$. Then
  \begin{enumerate}
    \item\label{lem:index:1}
      If $\Gamma$ is an overorder of $\Lambda$, then $\disc(\Lambda) = [\Gamma : \Lambda]^2 \cdot \disc(\Gamma)$.
    \item
      If $\mathfrak p$ is a non-zero prime ideal of $R$ not dividing $\disc(\Lambda)$, then $\Lambda_\mathfrak p$ is maximal.
    \item
      The number of overorders of $\Lambda$ is finite.
  \end{enumerate}
\end{prop}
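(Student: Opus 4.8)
The plan is to establish (i) first, since (ii) and (iii) will both fall out of it. Because the discriminant and the index ideal are both local invariants, for (i) I would reduce to the case where $R$ is a discrete valuation ring: localizing at a non-zero prime $\mathfrak p$, the module $\Lambda$ being $R$-projective forces $\Lambda_{\mathfrak p}$ and $\Gamma_{\mathfrak p}$ to be free $R_{\mathfrak p}$-modules of rank $n = \dim_K A$, and one has $\disc(\Lambda)R_{\mathfrak p} = \disc(\Lambda_{\mathfrak p})$ and $[\Gamma:\Lambda]R_{\mathfrak p} = [\Gamma_{\mathfrak p}:\Lambda_{\mathfrak p}] = \ord((\Gamma/\Lambda)_{\mathfrak p})$. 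Two fractional ideals agreeing at every localization are equal, so it suffices to prove the identity over a DVR.

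Over the DVR, I would pick $R$-bases $x_1,\dots,x_n$ of $\Lambda$ and $y_1,\dots,y_n$ of $\Gamma$, write $x_i = \sum_j T_{ij} y_j$ with $T \in \GL_n(K)$, and note that Smith normal form gives $\det(T)R = \ord(\Gamma/\Lambda) = [\Gamma:\Lambda]$. Bilinearity of the reduced trace form yields $\big(\tr(x_i x_j)\big)_{i,j} = T\,\big(\tr(y_k y_l)\big)_{k,l}\,T^{t}$; taking determinants gives $\disc(\Lambda) = \det(T)^2\,\disc(\Gamma) = [\Gamma:\Lambda]^2\,\disc(\Gamma)$ as ideals, all non-zero because the trace form of the semisimple algebra $A$ is non-degenerate. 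The one point I would be careful about is the identification of $\disc(\Lambda_{\mathfrak p})$ with the Gram determinant of a single basis, i.e. that replacing an arbitrary tuple of elements of $\Lambda_{\mathfrak p}$ by a basis can only enlarge the generated ideal; this is the routine-but-delicate computation with coefficient matrices, and is really the only obstacle in the whole proposition.

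For (ii), I would argue by contradiction: if $\mathfrak p \nmid \disc(\Lambda)$ but $\Lambda_{\mathfrak p}$ is not maximal, there is an $R_{\mathfrak p}$-order $\Gamma$ with $\Lambda_{\mathfrak p} \subsetneq \Gamma$, so $[\Gamma:\Lambda_{\mathfrak p}] = \mathfrak p^{k}R_{\mathfrak p}$ with $k \geq 1$, and the DVR case of (i) gives $\disc(\Lambda)R_{\mathfrak p} = \disc(\Lambda_{\mathfrak p}) = \mathfrak p^{2k}\disc(\Gamma) \subseteq \mathfrak p^{2}R_{\mathfrak p} \subsetneq R_{\mathfrak p}$, contradicting $\mathfrak p \nmid \disc(\Lambda)$. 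This route has the advantage of not requiring the existence of a maximal order containing $\Lambda$.

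For (iii), I would use (i) to confine every overorder inside one fixed finite module. If $\Gamma$ is an overorder and $\mathfrak c = [\Gamma:\Lambda]$, then $\disc(\Lambda) = \mathfrak c^{2}\disc(\Gamma) \subseteq \mathfrak c^{2} \subseteq \mathfrak c$, so $\mathfrak c \supseteq \mathfrak d := \disc(\Lambda)$; and since the order ideal $\ord(\Gamma/\Lambda) = \mathfrak c$ annihilates $\Gamma/\Lambda$, we get $\mathfrak c\Gamma \subseteq \Lambda$, hence $\Lambda \subseteq \Gamma \subseteq \mathfrak c^{-1}\Lambda \subseteq \mathfrak d^{-1}\Lambda$. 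The module $\mathfrak d^{-1}\Lambda/\Lambda$ is finitely generated over $R$ and annihilated by the non-zero ideal $\mathfrak d$, hence a module over the finite ring $R/\mathfrak d$, hence finite; therefore it has only finitely many subgroups, and in particular $\Lambda$ has only finitely many overorders.
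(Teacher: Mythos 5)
Your proof is correct. The paper itself offers no argument for this proposition: it simply cites \cite[(26.3) Proposition]{Curtis1990}, so there is no in-paper proof to compare against. Your argument is the standard one and almost certainly the argument given in that reference: localize to a DVR so that $\Lambda_{\mathfrak p}$ and $\Gamma_{\mathfrak p}$ become free, relate the Gram matrices of two bases by $G_x = T G_y T^{t}$, and read off $\disc(\Lambda_{\mathfrak p}) = \det(T)^2 \disc(\Gamma_{\mathfrak p})$ with $\det(T) R_{\mathfrak p} = [\Gamma_{\mathfrak p} : \Lambda_{\mathfrak p}]$ via Smith normal form; then (ii) is immediate by contradiction and (iii) follows from $\Lambda \subseteq \Gamma \subseteq \disc(\Lambda)^{-1}\Lambda$. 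The one point you flag, that $\disc(\Lambda_{\mathfrak p})$ equals the single Gram determinant of a basis, is indeed the only place needing care: any tuple $z_1,\dots,z_n \in \Lambda_{\mathfrak p}$ satisfies $z_i = \sum_j S_{ij} x_j$ with $S$ over $R_{\mathfrak p}$, so $\det(\tr(z_i z_j)) = \det(S)^2 \det(\tr(x_i x_j))$ lies in the ideal generated by the latter. One remark on (iii): your last step (finite abelian group has finitely many subgroups) uses residual finiteness of $R$, which is a standing hypothesis in this paper; in the general Dedekind setting one would instead invoke the Jordan--Zassenhaus theorem to bound the number of $\Lambda$-lattices between $\Lambda$ and $\disc(\Lambda)^{-1}\Lambda$.
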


\begin{proof}
  This is~\cite[(26.3) Proposition]{Curtis1990}.
\end{proof}


In view of Proposition~\ref{lem:index}~(iii) the problem of computing all overorders of $\Lambda$ is in fact well-defined.
Since in general maximal orders are not unique, there does not exist an order $\Lambda_0$ containing all overorders of $\Lambda$,
preventing us from directly applying Algorithm~\ref{alg:intorders}.
To circumvent this, we will show to find all minimal overorders of $\Lambda$ (if they exist) without the knowledge of any overorder of $\Lambda$.
The following is a generalization of \cite[(1.10) Proposition]{Brzezi1983} from the case of quaternion algebras to arbitrary semisimple algebras.

\begin{prop}\label{prop:important}
  Assume that $\Lambda \subsetneq \Gamma$ is a minimal extension of $R$-orders.
  Then there exists a unique prime ideal $\mathfrak p$ of $R$ with $\mathfrak p^2$ dividing $\disc(\Lambda)$ and $\mathfrak p \Gamma \subseteq \Lambda$. In particular, $\Gamma$ is a $\mathfrak p$-overorder of $\Lambda$.
\end{prop}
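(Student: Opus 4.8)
The plan is to identify $\mathfrak p$ as the unique prime dividing the index ideal $[\Gamma:\Lambda]$, then to upgrade this to $\mathfrak p\Gamma\subseteq\Lambda$ using minimality together with Nakayama's lemma, and finally to read off $\mathfrak p^2\mid\disc(\Lambda)$ from Proposition~\ref{lem:index}. For the first step, let $\mathfrak p_1,\dots,\mathfrak p_r$ be the prime divisors of $[\Gamma:\Lambda]$ and apply Proposition~\ref{prop:splitded} to $\Lambda\subseteq\Gamma$, obtaining a decomposition $\Gamma=\Gamma_1+\dots+\Gamma_r$ with each $\Gamma_i$ an intermediate order $\Lambda\subseteq\Gamma_i\subseteq\Gamma$. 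Minimality of the extension forces $\Gamma_i\in\{\Lambda,\Gamma\}$ for every $i$; since the submodules $(\Gamma/\Lambda)[\mathfrak p_i^\infty]$ give a direct sum decomposition of $\Gamma/\Lambda$ by Lemma~\ref{lem:dedtor}, at most one of them can be all of $\Gamma/\Lambda$, so exactly one $\Gamma_i$ equals $\Gamma$ and the rest equal $\Lambda$. Hence $r=1$, the index ideal is a power of a single prime $\mathfrak p$, the quotient $\Gamma/\Lambda$ is $\mathfrak p$-primary, and $\Gamma$ is a $\mathfrak p$-overorder; uniqueness of $\mathfrak p$ is clear, as $\mathfrak p$ is the only prime ideal of $R$ dividing $[\Gamma:\Lambda]$.

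For the second step I would show $\mathfrak p\Gamma\subseteq\Lambda$. Since $R$ is central in $\Gamma$, the $R$-submodule $\mathfrak p\Gamma$ is a two-sided ideal of $\Gamma$, so $\Lambda+\mathfrak p\Gamma$ is a subring of $\Gamma$ containing $\Lambda$: it is an $R$-submodule containing $1$, and it is closed under multiplication because $\mathfrak p\Gamma$ absorbs multiplication by $\Gamma\supseteq\Lambda$. Being an intermediate ring of finite index it is an $R$-order, so by minimality either $\Lambda+\mathfrak p\Gamma=\Lambda$, which is exactly $\mathfrak p\Gamma\subseteq\Lambda$, or $\Lambda+\mathfrak p\Gamma=\Gamma$. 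In the latter case, reducing modulo $\Lambda$ gives $\mathfrak p\cdot(\Gamma/\Lambda)=\Gamma/\Lambda$; but $\Gamma/\Lambda$ is a nonzero finitely generated $R$-module that is $\mathfrak p$-primary, hence equals its localization at $\mathfrak p$, a finitely generated module over the local ring $R_{\mathfrak p}$, and Nakayama's lemma then forces $\Gamma/\Lambda=0$, a contradiction. Therefore $\mathfrak p\Gamma\subseteq\Lambda$.

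Finally, the first part of Proposition~\ref{lem:index} gives $\disc(\Lambda)=[\Gamma:\Lambda]^2\cdot\disc(\Gamma)$, and since $\Gamma\neq\Lambda$ we have $\mathfrak p\mid[\Gamma:\Lambda]$, whence $\mathfrak p^2\mid\disc(\Lambda)$, as required. The step I expect to need the most care is the application of Nakayama's lemma in the second paragraph: one must localize at $\mathfrak p$ first and use that $\Gamma/\Lambda$ is finitely generated and $\mathfrak p$-primary before concluding. The remaining ingredients are formal — the subring property of $\Lambda+\mathfrak p\Gamma$ and the centrality of $R$ in $\Gamma$ — or direct appeals to Propositions~\ref{prop:splitded} and~\ref{lem:index}.
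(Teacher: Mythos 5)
Your proof is correct and rests on the same central move as the paper's: forming $\Lambda' = \Lambda + \mathfrak p\Gamma$, observing it is an intermediate order, and invoking minimality. The surrounding organization is genuinely different, though. The paper does it all in one stroke: take any prime $\mathfrak p$ dividing $[\Gamma:\Lambda]$, write $\Ann_R(\Gamma/\Lambda)=\mathfrak p\mathfrak a$, and if $\mathfrak a\neq R$ note that $\mathfrak a\Lambda'\subseteq\Lambda$ makes $\Ann_R(\Lambda'/\Lambda)$ strictly larger than $\Ann_R(\Gamma/\Lambda)$, forcing $\Lambda'\subsetneq\Gamma$, hence $\Lambda'=\Lambda$ by minimality, hence $\mathfrak p\subseteq\Ann_R(\Gamma/\Lambda)=\mathfrak p\mathfrak a$, contradiction. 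This single annihilator argument delivers $\mathfrak p\Gamma\subseteq\Lambda$ \emph{and} uniqueness simultaneously (two distinct primes dividing $[\Gamma:\Lambda]$ would each have to equal $\Ann_R(\Gamma/\Lambda)$), with no need for Proposition~\ref{prop:splitded}. You instead establish $r=1$ first via the cartesian decomposition of Proposition~\ref{prop:splitded} and minimality, then rule out $\Lambda'=\Gamma$ by localizing at $\mathfrak p$ and applying Nakayama. Both work; your Nakayama step is set up correctly (the localization is needed, and $\mathfrak p$-primariness makes $\Gamma/\Lambda$ its own $\mathfrak p$-localization). The paper's route is slightly more economical since it avoids the preliminary splitting; yours makes the role of $\mathfrak p$-primariness more explicit, and one can even bypass Nakayama entirely by iterating $\Gamma=\Lambda+\mathfrak p\Gamma$ to get $\Gamma=\Lambda+\mathfrak p^k\Gamma=\Lambda$ for $k$ large.
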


\begin{proof}
  Let $\mathfrak p$ be a prime ideal of $R$ dividing $[\Gamma : \Lambda]$ (by Lemma~\ref{lem:index}(\ref{lem:index:1}), $\mathfrak p^2$ divides the discriminant $\disc(\Lambda)$).
  In particular $\mathfrak p$ divides $\Ann_R(\Gamma/\Lambda)$.
  We need to show that $\Ann_R(\Gamma/\Lambda) = \mathfrak p$.
  Assume that $\Ann_R(\Gamma / \Lambda) = \mathfrak p \mathfrak a$ for some non-trivial ideal $\mathfrak a$ of $R$ and consider
  the $R$-module $\Lambda' = \Lambda + \mathfrak p \Gamma$.
  This is again a finitely generated projective $R$-submodule of $A$.
  Since $\Lambda \Gamma \subseteq \Gamma$ and $\Gamma \Lambda \subseteq \Gamma$, the module $\Lambda'$ is in fact an $R$-order of $A$ such that $\Lambda \subseteq \Lambda' \subseteq \Gamma$.
  As $\mathfrak a \Lambda' = \mathfrak a(\Lambda + \mathfrak p \Gamma) \subseteq \Lambda$, we have $\Ann_R(\Lambda '/\Lambda) \supseteq \mathfrak a \supsetneq \mathfrak p \mathfrak a = \Ann_R(\Gamma/\Lambda)$.
  In particular $\Lambda \subseteq \Lambda' \subsetneq \Gamma$ and from the minimality of $\Gamma$ we conclude $\Lambda + \mathfrak p \Gamma = \Lambda' = \Lambda$. This means that $\mathfrak p \Gamma \subseteq \Lambda$, so that $\Ann_R(\Gamma / \Lambda) \supseteq \mathfrak p$, giving a contradition.
\end{proof}

\begin{corollary}
  Every minimal $\mathfrak p$-overorder of $\Lambda$ is contained in $\mathfrak p ^{-1}\Lambda$.
\end{corollary}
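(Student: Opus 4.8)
The plan is to deduce the statement directly from Proposition~\ref{prop:important}. Let $\Gamma$ be a minimal $\mathfrak p$-overorder of $\Lambda$, so that $\Lambda \subsetneq \Gamma$ and $[\Gamma : \Lambda]$ is a nontrivial power of $\mathfrak p$. The first step is to observe that $\Lambda \subsetneq \Gamma$ is in fact a \emph{minimal} extension of $R$-orders: if $\Lambda \subseteq \Gamma' \subseteq \Gamma$ with $\Gamma'$ an $R$-order, then $[\Gamma' : \Lambda]$ divides $[\Gamma : \Lambda]$ and is therefore again a power of $\mathfrak p$, so $\Gamma'$ is itself a $\mathfrak p$-overorder of $\Lambda$ contained in $\Gamma$; minimality of $\Gamma$ among $\mathfrak p$-overorders then forces $\Gamma' = \Lambda$ or $\Gamma' = \Gamma$. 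This reduction uses only multiplicativity of the index ideal in towers and the fact that $\Lambda$ is itself a $\mathfrak p$-overorder of $\Lambda$ (of trivial index).

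Now Proposition~\ref{prop:important} applies to $\Lambda \subsetneq \Gamma$ and produces a prime ideal $\mathfrak q$ of $R$ with $\mathfrak q \Gamma \subseteq \Lambda$, relative to which $\Gamma$ is a $\mathfrak q$-overorder. Since $[\Gamma : \Lambda]$ is a nontrivial power of $\mathfrak p$, it is divisible by $\mathfrak p$ and by no other prime, so the uniqueness clause of Proposition~\ref{prop:important} yields $\mathfrak q = \mathfrak p$ and hence $\mathfrak p \Gamma \subseteq \Lambda$. Multiplying this inclusion of $R$-submodules of $A$ by the fractional ideal $\mathfrak p^{-1}$ gives $\Gamma = R\Gamma = (\mathfrak p^{-1}\mathfrak p)\Gamma = \mathfrak p^{-1}(\mathfrak p\Gamma) \subseteq \mathfrak p^{-1}\Lambda$, which is exactly the claim. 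I do not expect any genuine obstacle here: the content is entirely carried by Proposition~\ref{prop:important}, and the only point deserving an explicit word is the initial reduction to a minimal ring extension, which is immediate from the behaviour of the index ideal.
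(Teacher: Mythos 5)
Your proof is correct and follows essentially the same route as the paper: apply Proposition~\ref{prop:important} to deduce $\mathfrak p\Gamma \subseteq \Lambda$ and then multiply by $\mathfrak p^{-1}$. The only addition is your explicit preliminary observation that a minimal $\mathfrak p$-overorder is automatically a minimal ring extension (via multiplicativity of the index ideal), a point the paper leaves implicit; this is a harmless and slightly more careful formulation of the same argument.
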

\begin{proof}
    Let $\Lambda \subsetneq \Gamma$ be a minimal extension of orders and assume that
  $\Gamma$ is a $\mathfrak p$-overorder of $\Lambda$.  Then
  $\mathfrak p$ is equal to the prime ideal of
  Proposition~\ref{prop:important}, that is $\mathfrak p \Gamma \subseteq
  \Lambda$. Since $\mathfrak p \Lambda \subseteq \mathfrak p \Gamma$, this
  implies $\Lambda \subseteq \Gamma \subseteq \mathfrak p^{-1} \Lambda$.
\end{proof}

\begin{lemma}
  Let $\Gamma$ be a $\mathfrak p$-overorder of $\Lambda$.
  Then there exist $\mathfrak p$-overorders $\Lambda_1, \dotsc, \Lambda_s$ of $\Lambda$ such that
  \[
    \Lambda \subsetneq \Lambda_1 \subsetneq \dotsc \subsetneq \Lambda_{s-1} \subsetneq \Lambda_s = \Gamma,
  \]
  and for $1 \leq i \leq s -1$ the order $\Lambda_{i+1}$ is a minimal $\mathfrak p$-overorder of $\Lambda_i$.
\end{lemma}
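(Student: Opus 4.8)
The plan is to argue by induction on the finite cardinality $\lvert \Gamma/\Lambda\rvert$ (finite because $R$ is residually finite and $[\Gamma:\Lambda]$ is a $\mathfrak p$-power). If $\Gamma = \Lambda$ there is nothing to prove, so assume $\Lambda \subsetneq \Gamma$. The crux is to extract the first link $\Lambda_1$ of the chain. The set $\{\lvert\Delta/\Lambda\rvert \mid \Delta \text{ an } R\text{-order with } \Lambda \subsetneq \Delta \subseteq \Gamma\}$ is a nonempty subset of $\Z_{>0}$ (nonempty since $\Gamma$ itself qualifies), so it has a least element; I would let $\Lambda_1$ be an order attaining it.

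Next I would check that $\Lambda_1$ is a minimal overorder of $\Lambda$: any $R$-order $\Delta'$ with $\Lambda \subseteq \Delta' \subseteq \Lambda_1$ also satisfies $\Lambda \subseteq \Delta' \subseteq \Gamma$, so by minimality of $\lvert\Lambda_1/\Lambda\rvert$ either $\Delta' = \Lambda$ or $\Delta' = \Lambda_1$; hence there is no proper intermediate order and $\Lambda \subsetneq \Lambda_1$ is a minimal extension of orders. To see that $\Lambda_1$ is moreover a $\mathfrak p$-overorder, I would use the multiplicativity of the index ideal in towers, $[\Gamma:\Lambda] = [\Gamma:\Lambda_1]\cdot[\Lambda_1:\Lambda]$, so that $[\Lambda_1:\Lambda]$ divides $[\Gamma:\Lambda]$, which is a $\mathfrak p$-power by hypothesis; therefore $[\Lambda_1:\Lambda]$ is a $\mathfrak p$-power and $\Lambda_1$ is a minimal $\mathfrak p$-overorder of $\Lambda$. (Alternatively, Proposition~\ref{prop:important} already forces the index of a minimal extension to be a prime power, and the divisibility above only serves to identify that prime as $\mathfrak p$.)

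Finally, I would apply the induction hypothesis to the pair $\Lambda_1 \subseteq \Gamma$. This is again an extension of $\mathfrak p$-overorders, since $[\Gamma:\Lambda_1]$ divides the $\mathfrak p$-power $[\Gamma:\Lambda]$, and $\lvert\Gamma/\Lambda_1\rvert < \lvert\Gamma/\Lambda\rvert$ because $\Lambda \subsetneq \Lambda_1$. The induction yields orders $\Lambda_1 \subsetneq \Lambda_2 \subsetneq \dotsb \subsetneq \Lambda_s = \Gamma$ with each $\Lambda_{i+1}$ a minimal $\mathfrak p$-overorder of $\Lambda_i$; prepending the link $\Lambda \subsetneq \Lambda_1$ gives the desired chain.

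There is no substantial obstacle here: the argument is a routine well-founded induction, and the only points needing a little care are the existence of a minimal overorder inside $\Gamma$ (resting on the finiteness of $\Gamma/\Lambda$) and the bookkeeping with index ideals — in particular their multiplicativity in towers — which is exactly what guarantees that the property of being a $\mathfrak p$-overorder is inherited at every stage of the chain.
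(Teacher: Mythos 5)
Your proof is correct. The paper in fact states this lemma without any proof (it is immediately followed by the remark that one can therefore compute $\mathfrak p$-overorders recursively), and your well-founded induction on $\lvert\Gamma/\Lambda\rvert$, extracting a first link $\Lambda_1$ of minimal index and appealing to multiplicativity of the index ideal, is exactly the natural argument the authors suppress. One small point you leave implicit: the induction hypothesis applied to $\Lambda_1 \subseteq \Gamma$ produces orders $\Lambda_2,\dotsc,\Lambda_s$ that are $\mathfrak p$-overorders of $\Lambda_1$, whereas the lemma also requires them to be $\mathfrak p$-overorders of $\Lambda$; this is supplied by the same multiplicativity, $[\Lambda_i : \Lambda] = [\Lambda_i : \Lambda_1]\cdot[\Lambda_1 : \Lambda]$, together with the fact that in a Dedekind domain any divisor of a $\mathfrak p$-power ideal is again a $\mathfrak p$-power.
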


Therefore, by computing recursively minimal $\mathfrak p$-overorders, we can find all the $\mathfrak p$-overorders of $\Lambda$. 

\begin{algorithm}\label{alg:pover}
Given an $R$-order $\Lambda$ of $A$ and a prime ideal $\mathfrak p$, the following steps return all $\mathfrak p$-overorders of $\Lambda$.
  \begin{enumerate}
    \item
      Set $\mathcal T = \mathcal S = \{ \Lambda \}$.
    \item
      While $\mathcal S \neq \emptyset$ do the following:
      \begin{enumerate}
        \item\label{step:cheating}
          Remove an element $\Gamma$ of $\mathcal S$.
          Use Algorithm~\ref{alg:minimal} to determine the set $\mathcal N$ of all minimal overorders of $\Gamma$ contained in $\mathfrak p^{-1}\Gamma$.
        \item
          Replace $\mathcal T$ by $\mathcal T \cup \mathcal N$ and $\mathcal S$ by $\mathcal S \cup \mathcal N$.
      \end{enumerate}
    \item
      Return $\mathcal T$.
  \end{enumerate}
\end{algorithm}

\begin{remark}
  Note that in Step~(\ref{step:cheating}), the $R$-module $\mathfrak p^{-1}\Gamma$ is actually not an order, that is,
  it is not an overring of $\Gamma$, but just a $(\Gamma,\Gamma)$-bimodule. On the other hand, it is straightforward to see that Algorithm~\ref{alg:minimal} works
  also in this situation.
\end{remark}

Using Proposition~\ref{prop:splitded} this immediately translates into an algorithm for computing all overorders.

\begin{algorithm}\label{alg:allover}
Given an $R$-order $\Lambda$ of $A$ the following steps return all overorders of $\Lambda$.
  \begin{enumerate}
    \item
      Determine the prime ideals $\mathfrak p_1,\dotsc,\mathfrak p_r$ of $R$ whose square divides $\disc(\Lambda)$.
    \item
      For each $1 \leq i \leq r$ compute the set $S_i$ of $\mathfrak p$-overorders of $\Lambda$ using Algorithm~\ref{alg:pover}.
    \item
      Return $\sum_{1 \leq i \leq r} \Lambda_i$, where $(\Lambda_1,\dotsc,\Lambda_r)$ runs through all elements of $S_{1} \times \dotsb \times S_{r}$.
  \end{enumerate}
\end{algorithm}

\subsection{Splitting the algebra and order}\label{sec:split}

Recall that in view of Remark~\ref{rem:product}, it is desirable to find a non-trival cartesian product decomposition of the poset of overorders of $\Lambda$.
Since $A$ is a semisimple $K$-algebra, $A$ is in fact equal to the direct sum $A_1 \oplus \dotsb \oplus A_r$, where the $A_i$ are simple $K$-algebras.
While this implies that any $A$-module enjoys a similar splitting, in general it is not true that any order $\Lambda$ will decompose as $\Lambda_1 \oplus \dotsb \oplus \Lambda_r$, with
$\Lambda_i$ an order of $A_i$ for $i \in \{1,\dotsc,r\}$.
We now describe when the splitting of $A$ induces a splitting of $\Lambda$.

\begin{definition}
  We call an order $\Lambda$ of $A$ \textit{decomposable}, if there exist non-trivial rings $\Lambda_1$ and $\Lambda_2$ contained in $\Lambda$, such that
  \[ \Lambda = \Lambda_1 \oplus \Lambda_2 \]
  and $\Lambda_1$ and $\Lambda_2$ are $R$-orders of $K\Lambda_1$ and $K\Lambda_2$.
\end{definition}

We have the following criterion for an order to be decomposable.

\begin{lemma}
  An order $\Lambda$ of $A$ is decomposable if and only if $\Lambda$ contains a non-trivial central idempotent of $A$.
\end{lemma}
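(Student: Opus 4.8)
The plan is to prove both implications of the equivalence ``$\Lambda$ decomposable $\iff$ $\Lambda$ contains a non-trivial central idempotent of $A$.''

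First I would handle the easy direction. Suppose $\Lambda = \Lambda_1 \oplus \Lambda_2$ with $\Lambda_i$ an $R$-order of $A_i := K\Lambda_i$ and both $\Lambda_i$ non-trivial. Then $A = K\Lambda = A_1 \oplus A_2$ as a direct sum of $K$-algebras, so the identity decomposes as $1 = e_1 + e_2$ with $e_i \in A_i$ the identity of $A_i$. Each $e_i$ is a central idempotent of $A$, it is non-trivial since $A_1, A_2 \neq 0$, and moreover $e_i \in \Lambda_i \subseteq \Lambda$ because $\Lambda_i$ is a ring with identity $e_i$ (an $R$-order of $A_i$ must contain the identity of $A_i$, as it is a subring spanning $A_i$ and hence equals $e_i \Lambda_i e_i$, or more simply: $\Lambda_i$ is unital with unit acting as $1$ on $A_i$). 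So $\Lambda$ contains the non-trivial central idempotent $e_1$ of $A$.

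For the converse, suppose $e \in \Lambda$ is a non-trivial central idempotent of $A$. Set $e' = 1 - e$, also a central idempotent in $\Lambda$, and put $\Lambda_1 = e\Lambda$, $\Lambda_2 = e'\Lambda$. Because $e$ is central, $\Lambda_1$ and $\Lambda_2$ are two-sided ideals of $\Lambda$ which are also rings (with units $e$ and $e'$), and the orthogonality $ee' = 0$ together with $e + e' = 1$ gives the internal direct sum decomposition $\Lambda = \Lambda_1 \oplus \Lambda_2$ as rings. It remains to check that $\Lambda_1$ is an $R$-order of $K\Lambda_1$: it is a subring of the $K$-algebra $eA$, it is a finitely generated $R$-module (being a direct summand, hence homomorphic image, of the finitely generated $R$-module $\Lambda$) and projective over the Dedekind domain $R$ (as an $R$-submodule, or direct summand, of the projective module $\Lambda$, using that submodules of projective modules over Dedekind domains are projective), and it contains a $K$-basis of $eA = K\Lambda_1$ since $\Lambda$ contains a $K$-basis of $A$ and multiplication by $e$ is $K$-linear onto $eA$. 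The same holds for $\Lambda_2$, and both are non-trivial precisely because $e \neq 0, 1$.

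I expect the main (minor) obstacle to be the bookkeeping around unitality and the order axioms rather than any real difficulty: one must be slightly careful that in the direct sum decomposition the summand rings are unital with units summing to $1_A$, and that ``$R$-order of $K\Lambda_i$'' is verified on all three counts (finitely generated, projective, contains a $K$-basis). All of these follow from standard facts about modules over Dedekind domains together with the centrality and idempotency of $e$, so the proof is essentially a matter of assembling these observations cleanly.
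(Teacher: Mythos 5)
Your proof is correct and follows essentially the same route as the paper's: in one direction the identity of a summand $\Lambda_1$ furnishes the non-trivial central idempotent, and in the other $e\Lambda \oplus (1-e)\Lambda$ gives the decomposition. You merely spell out the verification of the $R$-order axioms (finitely generated, projective, full rank) in more detail than the paper, which leaves them implicit.
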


\begin{proof}
  Assume that $\Lambda = \Lambda_1 \oplus \Lambda_2$. Let $e \in \Lambda$ be the identity of $\Lambda_1$. Then $e$ is a central idempotent of the algebra $A$.
  On the other hand, if $e \in \Lambda$ is a central idempotent, then $e\Lambda$ and $(1 - e)\Lambda$ are orders of $eA$ and $(1 - e)A$ respectively.
  Clearly $\Lambda = e\Lambda + (1-e)\Lambda$ as for every $x \in \Lambda$ we have $x = x\cdot 1 = x(e +1-e ) = xe + x(1-e)$. The sum is direct as $e$ annihilates $(1-e)$ and vice versa.
\end{proof}

In case the order is decomposable, the poset of overorders is again a direct product of two smaller posets.

\begin{prop}
  Let $\Lambda = \Lambda_1 \oplus \Lambda_2$ be a decomposable order.
  Then every overorder $\Gamma$ of $\Lambda$ is of the form $\Gamma = \Gamma_1 \oplus \Gamma_2$,
  for unique overorders $\Gamma_1$, $\Gamma_2$ of $\Lambda_1$ and $\Lambda_2$ respectively.
\end{prop}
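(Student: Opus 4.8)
The plan is to reduce everything to the central idempotent furnished by the decomposition $\Lambda = \Lambda_1 \oplus \Lambda_2$. By the preceding lemma, $\Lambda$ contains a non-trivial central idempotent $e$ of $A$, namely the identity element of $\Lambda_1$; then $e_1 := e$ and $e_2 := 1-e$ are orthogonal central idempotents of $A$ with $\Lambda_i = e_i \Lambda$ and $K\Lambda_i = e_i A$. The one observation that makes the proof work is that any overorder $\Gamma$ of $\Lambda$ contains $\Lambda$, hence contains $e_1$ and $e_2$, and these remain central idempotents of $A$ since centrality is a property of $A$, not of $\Lambda$.

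First I would set $\Gamma_i = e_i \Gamma$. Exactly as in the proof of the decomposability lemma, $\Gamma_i$ is a subring of $e_i A$ with identity $e_i$, and $\Gamma = \Gamma_1 \oplus \Gamma_2$, because $e_1$ annihilates $e_2$ and vice versa, and $x = e_1 x + e_2 x$ for every $x \in \Gamma$. Since $\Lambda_i = e_i \Lambda \subseteq e_i \Gamma = \Gamma_i$, each $\Gamma_i$ contains $\Lambda_i$. Moreover $\Gamma_i$ is an $R$-submodule of the finitely generated projective $R$-module $\Gamma$ over the Dedekind domain $R$, hence is itself finitely generated and projective, and it contains $\Lambda_i$, which contains a $K$-basis of $e_i A = K\Lambda_i$. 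Thus $\Gamma_i$ is an $R$-order of $K\Lambda_i$ containing $\Lambda_i$; the index $[\Gamma_i : \Lambda_i]$ is finite because $\Gamma/\Lambda \cong \Gamma_1/\Lambda_1 \oplus \Gamma_2/\Lambda_2$ and $[\Gamma : \Lambda]$ is finite. So $\Gamma_i$ is an overorder of $\Lambda_i$ and $\Gamma = \Gamma_1 \oplus \Gamma_2$ as required.

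For uniqueness, suppose also $\Gamma = \Gamma_1' \oplus \Gamma_2'$ with $\Gamma_i'$ an overorder of $\Lambda_i$. Then $\Gamma_1'$ is an $R$-order of $K\Lambda_1 = e_1 A$, so $\Gamma_1' \subseteq e_1 A$; hence $e_1$ acts as the identity on $\Gamma_1'$ and $e_2$ annihilates it, and symmetrically $e_2$ is the identity on $\Gamma_2'$ while $e_1$ annihilates it. Multiplying the equality $\Gamma = \Gamma_1' \oplus \Gamma_2'$ by $e_1$ yields $\Gamma_1 = e_1\Gamma = e_1\Gamma_1' = \Gamma_1'$, and likewise $\Gamma_2 = \Gamma_2'$. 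This establishes both existence and uniqueness.

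I do not expect a genuine obstacle here: the content is entirely the remark that $e \in \Lambda \subseteq \Gamma$ and that $e$ stays central in $A$, after which everything is the same bookkeeping as in the decomposability lemma. The only points that need a word of justification are that a submodule of a finitely generated projective module over a Dedekind domain is again finitely generated and projective (so that $\Gamma_i$ really is an order), and that the index $[\Gamma_i:\Lambda_i]$ inherits finiteness from $[\Gamma:\Lambda]$.
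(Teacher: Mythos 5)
Your proof is correct and follows the same idea as the paper's one-line argument (multiply $\Gamma$ by the central idempotents $e_1, e_2 \in \Lambda \subseteq \Gamma$ to obtain $\Gamma = e_1\Gamma \oplus e_2\Gamma$), merely supplying the routine verifications that $e_i\Gamma$ is indeed an $R$-order of $e_iA$ and the uniqueness check that the paper leaves implicit. One small stylistic remark: projectivity of $\Gamma_i$ is even more immediate from $\Gamma_i$ being a direct summand of the projective module $\Gamma$, without invoking the Dedekind property.
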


\begin{proof}
  Let $1 = e_1 + e_2$ with central idempotents $e_i \in \Lambda_i$. Since $e_i \in
  \Lambda \subseteq \Lambda_i$, we have $\Gamma = e_1\Gamma \oplus e_2\Gamma$.
\end{proof}

\begin{remark}
  Let us denote by $1 = e_1 + \dotsb + e_r$ the decomposition of $1 \in A$ into primitive central idempotents with $e_i \in A_i$, which can easily be computed by splitting the center of $A$.
  To check if an order $\Lambda$ of $A$ is decomposable (and to find an explitiy decomposition of $\Lambda$) it is not sufficient to check if $e_i \in \Lambda$ for $i \in \{1,\dotsc,r\}$.
  Instead one has to check if $\Lambda$ contains any of the $2^r - 1$ non-trivial central idempotents $\{ \sum_{1 \leq i \leq r} b_i e_i \, | \,b \in \{0, 1\}^r, b \neq 0 \}$ of $A$. Although by removing complements this reduces to $2^{r- 1} - 1$ tests, the search space is still exponential in $r$.
  While this works (well) for small $r$, this is not advisable for large values of $r$.
  See Example~\ref{example:split} for an application.
\end{remark}

\section{Étale algebras}\label{sec:com}

As in the previous section, we denote by $R$ a residually finite Dedekind domain with field of fractions $K$. We will now consider the problem of computing overorders of an order $\Lambda$ in a finite étale $K$-algebra $A$, that is, a finite product of finite separable field extensions of $K$.
In particular, in contrast to the previous sections, the algebra and rings we are dealing with are commutative.
Note that in this case, all $R$-orders of $A$ are noetherian reduced rings and the set $\bar \Lambda = \{ x \in A \mid x\text{ integral over $R$}\}$
is the unique maximal order of $A$.

We will focus on improving Algorithm~\ref{alg:pover}, that is, the determination of $\mathfrak p$-overorders of $\Lambda$, where $\mathfrak p$ is a prime ideal of the base ring $R$.
On the one hand, in Section~\ref{subsec:compp} we will show how to exploit the commutative structure to determine minimal $\mathfrak p$-overorders more efficiently. This relies on the properties of conductors of minimal ring extensions of commutative rings, which are absent in the non-commutative setting.
On the other hand, in Section~\ref{subsec:splitbeta} we will exploit the primary decomposition of the conductor of the ring extension $\Lambda \subseteq \Lambda_0$ to decompose the poset of $\mathfrak p$-overorders of $\Lambda$ contained in $\bar \Lambda$ as a cartesian product of posets (see Remark~\ref{rem:product}). Again, this is not possible in the non-commutative setting due to the lack of an analogue of primary decompositions. In the last Section~\ref{subsec:gor}, we show how the algorithm simplifies in case $\Lambda$ is a Gorenstein or Bass order.

In the following, for two subsets $X, Y$ of $A$ we will denote by $(X : Y)$ the set $\{ x \in A \mid xY \subseteq X\}$.
In case $X \subseteq Y$ is a ring extensions of subrings of $A$, we call $(X : Y)$ the \textit{conductor} of the extension.
It is the largest ideal of $Y$ which is contained in $X$.

\subsection{Computing $\mathfrak p$-overorders}\label{subsec:compp}
We begin by investigating the structure of minimal overorders to obtain a stronger version of Proposition~\ref{prop:important}.

\begin{lemma}\label{lem:awesome}
  Let $\mathfrak P$ be a non-zero prime ideal of $\Lambda$. Then $\bar \Lambda \cap (\Lambda : \mathfrak P) = (\mathfrak P : \mathfrak P)$.
\end{lemma}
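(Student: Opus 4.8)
The plan is to prove the two inclusions $\bar\Lambda \cap (\Lambda : \mathfrak P) \subseteq (\mathfrak P : \mathfrak P)$ and $(\mathfrak P : \mathfrak P) \subseteq \bar\Lambda \cap (\Lambda : \mathfrak P)$ separately. The second inclusion should be the routine one: if $x \in (\mathfrak P : \mathfrak P)$, then $x \mathfrak P \subseteq \mathfrak P \subseteq \Lambda$, so immediately $x \in (\Lambda : \mathfrak P)$; and since $\mathfrak P$ is a nonzero ideal of $\Lambda$, it is a faithful finitely generated $R$-module (here using that $A$ is étale, so $\Lambda$ is reduced and $\mathfrak P$ contains a nonzerodivisor, hence a $K$-basis of $A$ after scaling, or more simply that $\mathfrak P$ is a full-rank $R$-lattice), and $x\mathfrak P \subseteq \mathfrak P$ forces $x$ to be integral over $R$ by the determinant trick. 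Thus $x \in \bar\Lambda$.

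**The forward inclusion.**

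For the first inclusion, suppose $x \in \bar\Lambda$ and $x\mathfrak P \subseteq \Lambda$. I want to conclude $x\mathfrak P \subseteq \mathfrak P$. Consider the $\Lambda$-module $x\mathfrak P + \mathfrak P$; call it $\mathfrak b$. It is an ideal of $\Lambda$ contained in $\Lambda$, and it contains $\mathfrak P$. Since $\mathfrak P$ is a maximal ideal of $\Lambda$ (being a nonzero prime of a one-dimensional ring — again using that $\Lambda$ is an order in an étale algebra, so $\Lambda$ has Krull dimension one), either $\mathfrak b = \mathfrak P$, in which case we are done, or $\mathfrak b = \Lambda$. I need to rule out $\mathfrak b = \Lambda$. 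The key point is that $x \in \bar\Lambda$: the ring $\Lambda[x] = R[x] + \Lambda x + \cdots$ is a finitely generated $\Lambda$-module (since $x$ is integral over $R$, hence over $\Lambda$), so $\Lambda[x]$ is an overorder of $\Lambda$. If $x\mathfrak P + \mathfrak P = \Lambda$, then $1 = xp + q$ for some $p, q \in \mathfrak P$, and multiplying by an element of $\mathfrak P$ and iterating, one shows $\mathfrak P$ generates the unit ideal in $\Lambda[x]$, i.e. $\mathfrak P \Lambda[x] = \Lambda[x]$. But $\mathfrak P \subseteq (\Lambda : \Lambda[x])$ would then be forced to equal $\Lambda[x]$-wide... more cleanly: localize at $\mathfrak P$. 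In $\Lambda_{\mathfrak P}$, the ideal $\mathfrak P_{\mathfrak P}$ is the maximal ideal, and $x \in (\Lambda_{\mathfrak P})$-integral closure; the relation $x \mathfrak P_{\mathfrak P} = \Lambda_{\mathfrak P}$ would make $\mathfrak P_{\mathfrak P}$ invertible with inverse containing $x$, forcing $\Lambda_{\mathfrak P}$ to be a DVR and $x \in \Lambda_{\mathfrak P}$, hence $x\mathfrak P \subseteq \mathfrak P$ after all — a contradiction with $\mathfrak b = \Lambda \supsetneq \mathfrak P$ at $\mathfrak P$. At primes $\mathfrak Q \neq \mathfrak P$ we have $x \mathfrak P \subseteq \mathfrak P$ trivially since $\mathfrak P_{\mathfrak Q} = \Lambda_{\mathfrak Q}$. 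So $x\mathfrak P \subseteq \mathfrak P$ globally.

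**The main obstacle.**

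The delicate step is the argument that $x\mathfrak P + \mathfrak P \ne \Lambda$ when $x$ is integral — i.e. extracting a contradiction from $x\mathfrak P = \Lambda$ locally at $\mathfrak P$. The cleanest route is probably: reduce to the local ring $\Lambda_\mathfrak{P}$, note $\mathfrak P_\mathfrak{P}$ is the maximal ideal, and if $x\mathfrak P_\mathfrak{P} = \Lambda_\mathfrak{P}$ then $\mathfrak P_\mathfrak{P}$ is an invertible (hence principal, say $= (\pi)$) ideal in a one-dimensional local Noetherian domain-or-reduced-ring, so $\Lambda_\mathfrak{P}$ is regular (a DVR if a domain), hence integrally closed, so $\bar\Lambda_\mathfrak{P} = \Lambda_\mathfrak{P}$ and $x \in \Lambda_\mathfrak{P}$, whence $x\pi \in \mathfrak P_\mathfrak{P}$, contradicting $x\mathfrak P_\mathfrak{P} = \Lambda_\mathfrak{P} \ne \mathfrak P_\mathfrak{P}$. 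One must be careful that $\Lambda$ is reduced but not a domain in general; this is handled by passing to the étale factors of $A$ (equivalently, since $\mathfrak P$ is prime, $\Lambda/\mathfrak P$ is a domain and the relevant completion/localization behaves as in the domain case) — or by citing that a one-dimensional reduced Noetherian local ring in which the maximal ideal is invertible is a DVR. I would isolate this as the crux and spend most of the writeup there, treating everything else as a short verification.
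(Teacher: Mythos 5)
Your proof is correct and takes a genuinely different route from the paper's. The paper handles the hard inclusion $\bar\Lambda \cap (\Lambda : \mathfrak P) \subseteq (\mathfrak P : \mathfrak P)$ by a direct, elementary computation: given $x$ in the left-hand side and $y \in \mathfrak P$, one takes a monic $f \in \Lambda[X]$ with $f(\mu_x) = 0$ on $\bar\Lambda$, multiplies the relation $f(\mu_x)y = 0$ through by $y^{n-1}$ to produce $(xy)^n \in \mathfrak P$, and then invokes only that $\mathfrak P$ is radical --- no localization, no fractional-ideal machinery, no DVR theory. You instead pass to the local ring $\Lambda_{\mathfrak P}$, deduce invertibility of $\mathfrak P_{\mathfrak P}$ from the hypothesis $x\mathfrak P + \mathfrak P = \Lambda$, and conclude $\Lambda_{\mathfrak P}$ is a DVR, hence integrally closed, contradicting $x \in \bar\Lambda \setminus \Lambda_{\mathfrak P}$. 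Your argument is more structural and exposes the geometric content (an invertible maximal ideal forces regularity at $\mathfrak P$, so nothing integral can live strictly above $\Lambda_{\mathfrak P}$), whereas the paper's is shorter and needs fewer prerequisites. Two small points worth tightening in your version: (a) both arguments implicitly need $\mathfrak P$ to lie over a nonzero prime of $R$ --- equivalently, to be maximal --- already for the easy inclusion, since the determinant trick requires $\mathfrak P$ to be a faithful $R[x]$-module, i.e., a full-rank lattice; you use maximality a second time in the dichotomy $x\mathfrak P + \mathfrak P \in \{\mathfrak P, \Lambda\}$, which the paper's forward direction avoids entirely (it would work for any radical ideal). (b) In the crux step you should spell out that an invertible ideal in a Noetherian local ring is principal and generated by a nonzerodivisor, so that Krull's intersection theorem forces $\Lambda_{\mathfrak P}$ to be a domain before one can call it a DVR; as written this is asserted but the nonzerodivisor point is what makes the reduction from ``reduced'' to ``domain'' go through.
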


\begin{proof}
  The Cayley--Hamilton theorem (\cite[Theorem 4.3]{Eisenbud1995}) shows that $(\mathfrak P : \mathfrak P) \subseteq \bar \Lambda$.
  Thus we have $(\mathfrak P : \mathfrak P) \subseteq \bar \Lambda \cap (\Lambda : \mathfrak P)$.
  Now let $x \in \bar \Lambda \cap (\Lambda : \mathfrak P)$ and $y \in \mathfrak P$.
  We need to show that $xy \in \mathfrak P$. To this end, let $\mu_x \colon \bar\Lambda \to \bar\Lambda$ be multiplication with $x$.
  Since $\bar\Lambda$ is a finitely generated $\Lambda$-module, by the Cayley--Hamilton theorem there
  exists a monic polynomial $f = \sum_{0\leq i \leq n} f_i X^i \in \Lambda[X]$ with $f(\mu_x) = 0$.
  Thus $0 = f(\mu_x)y = \sum_{0 \leq i \leq n} f_i x^i y = x^n y + \sum_{0 \leq i \leq n - 1} f_i x^i y$.
  Multiplying with $y^{n-1}$ we obtain
  \[ 0 = x^n y^n + y\sum_{0 \leq i \leq n - 1} f_i x^i y^{n-1}. \]
  As $x \in (\Lambda : \mathfrak P)$ and $y \in \mathfrak P$ this implies $(xy)^n \in \mathfrak P$.
  Since $\mathfrak P$ is a radical ideal, we obtain $xy \in \mathfrak P$.
\end{proof}

\begin{prop}\label{prop:commin}
  Let $\Lambda \subseteq \Gamma$ be a minimal extension of orders of $A$. Then the following hold:
  \begin{enumerate}
    \item
      The conductor $(\Lambda : \Gamma)$ is a prime ideal $\mathfrak P$ of $\Lambda$.
    \item
      We have $\Gamma \subseteq (\mathfrak P : \mathfrak P)$.
    \item
      If $\mathfrak P \cap R = \mathfrak p$, then $\mathfrak p$ is the unique prime ideal divisor of $[\Gamma : \Lambda]$.
  \end{enumerate}
\end{prop}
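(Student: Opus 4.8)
The plan is to prove the three parts in order, with part~(i) carrying essentially all the work. Set $\mathfrak P = (\Lambda : \Gamma)$. It is a proper ideal of $\Lambda$, since $1 \in \mathfrak P$ would give $\Gamma \subseteq \Lambda$, and it is nonzero, since it contains $\Ann_R(\Gamma/\Lambda)$, a nonzero ideal of $R$ because $\Gamma/\Lambda$ is a nonzero finite $R$-module. To see that $\mathfrak P$ is prime, let $a, b \in \Lambda$ with $ab \in \mathfrak P$ and $a \notin \mathfrak P$; I claim $b \in \mathfrak P$. Following the idea of the proof of Proposition~\ref{prop:important}, consider the $R$-module $\Lambda' = \Lambda + a\Gamma$. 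A short computation, using only that $\Gamma$ is a ring with $\Lambda \subseteq \Gamma$ and that $a \in \Lambda$, shows $\Lambda' \cdot \Lambda' \subseteq \Lambda'$; hence $\Lambda'$ is a subring of $A$ with $\Lambda \subseteq \Lambda' \subseteq \Gamma$, and $\Lambda' \neq \Lambda$ because $a \notin \mathfrak P$ means $a\Gamma \not\subseteq \Lambda$. Minimality of $\Lambda \subseteq \Gamma$ forces $\Lambda' = \Gamma$, i.e.\ $\Gamma = \Lambda + a\Gamma$, and then $b\Gamma = b\Lambda + (ab)\Gamma \subseteq \Lambda$: here one uses commutativity of $A$ to rewrite $b \cdot a\Gamma$ as $(ab)\Gamma$ and then $ab \in \mathfrak P$. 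Thus $b \in (\Lambda : \Gamma) = \mathfrak P$, proving primality. This step is the only place the commutativity (i.e.\ étale) hypothesis is genuinely used.

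Part~(ii) is then immediate: by construction the conductor $\mathfrak P = (\Lambda : \Gamma)$ is an ideal of $\Gamma$, so $\Gamma \mathfrak P \subseteq \mathfrak P$, which is precisely $\Gamma \subseteq (\mathfrak P : \mathfrak P)$. (Alternatively, combine $\Gamma \subseteq \bar\Lambda$, $\Gamma \subseteq (\Lambda : \mathfrak P)$, and Lemma~\ref{lem:awesome}.)

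For part~(iii), apply Proposition~\ref{prop:important} to the minimal extension $\Lambda \subsetneq \Gamma$: there is a unique prime ideal $\mathfrak q$ of $R$ with $\mathfrak q\Gamma \subseteq \Lambda$, and $\Gamma$ is a $\mathfrak q$-overorder, so $[\Gamma : \Lambda]$ is a positive power of $\mathfrak q$; in particular $\mathfrak q$ is nonzero and is the unique prime divisor of $[\Gamma : \Lambda]$. It remains to identify $\mathfrak q$ with $\mathfrak p = \mathfrak P \cap R$. From $\mathfrak q\Gamma \subseteq \Lambda$ we get $\mathfrak q \subseteq (\Lambda : \Gamma) \cap R = \mathfrak P \cap R = \mathfrak p$; and $\mathfrak p$ is a proper prime ideal of $R$, being the contraction of the proper prime ideal $\mathfrak P$. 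Since nonzero prime ideals of a Dedekind domain are maximal, $\mathfrak q \subseteq \mathfrak p$ forces $\mathfrak q = \mathfrak p$, completing the argument. I do not foresee any real obstacle; the only point requiring a little care is the routine closure check for $\Lambda + a\Gamma$ in part~(i), which mirrors exactly the corresponding computation in the proof of Proposition~\ref{prop:important}.
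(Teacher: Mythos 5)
Your proof is correct, and it takes a genuinely different (and more self‑contained) route than the paper for parts (i) and (ii). For (i), the paper invokes Ferrand--Olivier \cite[Th\'eor\`eme 2.2]{Ferrand1970} to conclude that the conductor of a minimal integral extension is a maximal ideal, and then just observes that this ideal lies inside $(\Lambda:\Gamma)\subsetneq\Lambda$, which forces equality. You instead prove primality directly by the $\Lambda'=\Lambda+a\Gamma$ trick, in the same spirit as the paper's own proof of Proposition~\ref{prop:important}; this is elementary and avoids the citation, at the cost of proving only that $\mathfrak P$ is prime rather than maximal. That is exactly what the statement asks for, and in this one-dimensional setting (and since you showed $\mathfrak P\cap R\neq 0$) prime and maximal are in any case equivalent, so nothing is lost. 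For (ii), the paper deduces $\Gamma\subseteq(\Lambda:\mathfrak P)\cap\bar\Lambda=(\mathfrak P:\mathfrak P)$ via the Cayley--Hamilton argument of Lemma~\ref{lem:awesome}, whereas you simply note that the conductor $(\Lambda:\Gamma)$ is an ideal of the larger ring $\Gamma$ (indeed the largest $\Gamma$-ideal inside $\Lambda$, as the paper itself remarks), so $\Gamma\mathfrak P\subseteq\mathfrak P$, i.e.\ $\Gamma\subseteq(\mathfrak P:\mathfrak P)$; this is cleaner and bypasses Lemma~\ref{lem:awesome} entirely. Part (iii) is essentially the same in both: reduce to Proposition~\ref{prop:important} and identify the prime of $R$ there with $\mathfrak p=\mathfrak P\cap R$ via containment and maximality (the paper's version is shorter because, having $\mathfrak P$ maximal already, $\mathfrak p\Gamma\subseteq\mathfrak P\Gamma=\mathfrak P\subseteq\Lambda$ is immediate). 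One small point worth making explicit in your (iii): you need $\mathfrak p\neq 0$ to conclude it is maximal, but this follows from $0\neq\Ann_R(\Gamma/\Lambda)\subseteq\mathfrak P\cap R=\mathfrak p$, or simply from $\mathfrak q\subseteq\mathfrak p\subsetneq R$ with $\mathfrak q$ already maximal, as you in effect argue.
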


\begin{proof}
  (i): Since $\Lambda \subseteq \Gamma$ is an integral extension, from \cite[Théorème 2.2]{Ferrand1970} it follows that there exists a maximal ideal $\mathfrak P$ of $\Lambda$ such that $\mathfrak P \Gamma = \mathfrak P \subseteq \Lambda$. Hence $\mathfrak P \subseteq (\Lambda : \Gamma)$ and since $(\Lambda : \Gamma) \neq \Lambda$, the claim follows.

  (ii): From (i) we have $\mathfrak P \Gamma \subseteq \Lambda$, hence $\Gamma \subseteq (\Lambda : \mathfrak P) \cap \bar \Lambda = (\mathfrak P : \mathfrak P)$ by Lemma~\ref{lem:awesome}.

  (iii): As $\mathfrak p \subseteq \mathfrak P$, we have $\mathfrak p \Gamma \subseteq \Lambda$ and the claim follows.
\end{proof}

\begin{prop}\label{prop:commin2}
  Let $\Lambda \subseteq \Gamma$ be a minimal extension of orders of $A$ with conductor
  $\mathfrak P = (\Lambda : \Gamma)$.
  Then either $\Gamma/\mathfrak P$ is a two-dimensional $\Lambda/\mathfrak P$-subspace of $(\mathfrak P : \mathfrak P)/\mathfrak P$ or $\Lambda/\mathfrak P \subseteq \Gamma/\mathfrak P$ is an extension of finite fields of prime degree.
  Moreover, there are at most two prime ideals of $\Gamma$ lying over $\mathfrak P$.
\end{prop}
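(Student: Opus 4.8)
The plan is to reduce the entire statement to the classification of minimal ring extensions of the finite field $k := \Lambda/\mathfrak P$, and then read off the three possibilities.

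First I would pass to the quotient by $\mathfrak P$. By Proposition~\ref{prop:commin}(i) the conductor $\mathfrak P = (\Lambda : \Gamma)$ is a maximal ideal of $\Lambda$; being the conductor, it is also an ideal of $\Gamma$ and of $(\mathfrak P:\mathfrak P)$, and $\mathfrak P \subsetneq \Lambda \subsetneq \Gamma$. Since $R$ is residually finite, $k = \Lambda/\mathfrak P$ is a finite field and $\bar\Gamma := \Gamma/\mathfrak P$ is a finite commutative $k$-algebra strictly containing $k$. Pulling back an intermediate ring $k \subseteq D \subseteq \bar\Gamma$ along the projection $\pi\colon\Gamma\to\bar\Gamma$ (note $\pi(\Lambda)=k$) produces an intermediate ring $\Lambda \subseteq \pi^{-1}(D) \subseteq \Gamma$, so minimality of $\Lambda \subseteq \Gamma$ shows that $k\subseteq\bar\Gamma$ is again a \emph{minimal} ring extension. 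Finally, Proposition~\ref{prop:commin}(ii) gives $\Gamma\subseteq(\mathfrak P:\mathfrak P)$, whence $\bar\Gamma$ is a $k$-subspace of $(\mathfrak P:\mathfrak P)/\mathfrak P$.

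Next I would classify the finite minimal ring extensions $k\subseteq B$ of a finite field; this is the classification of minimal ring extensions of fields in \cite{Ferrand1970}, but it is short enough to reprove directly. Since $B\neq k$, any $\alpha\in B\setminus k$ generates $B$ over $k$ by minimality, so $B\cong k[X]/(f)$ for the minimal polynomial $f$ of $\alpha$; writing $f=\prod_i g_i^{e_i}$ and applying the Chinese remainder theorem, $B\cong\prod_i k[X]/(g_i^{e_i})$ is a finite product of local Artinian $k$-algebras. A case analysis then shows that exactly one of three things occurs, in each case by exhibiting a forbidden intermediate ring unless we are in the stated situation. If $B\cong B_1\times\dotsb\times B_m$ with $m\geq2$, the subring $k+(0\times B_2\times\dotsb\times B_m)$ forces $B_1=k$, hence all factors equal $k$, and then $m\geq3$ is excluded by the subring $\{(a,b,\dotsc,b)\mid a,b\in k\}$, so $B\cong k\times k$. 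If $B$ is local with nonzero maximal ideal $\mathfrak m$, then $k+\mathfrak m^2$ forces $\mathfrak m^2=0$ by Nakayama, the subring $k+\mathfrak n$ for a proper $(B/\mathfrak m)$-subspace $\mathfrak n\subsetneq\mathfrak m$ forces $\dim_{B/\mathfrak m}\mathfrak m=1$, and $k+\mathfrak m$ forces $[B/\mathfrak m:k]=1$, so $B\cong k[\varepsilon]/(\varepsilon^2)$. Finally, if $B$ is a field, it is a minimal field extension of the finite field $k$, hence of prime degree, since the intermediate fields of $\F_q\subseteq\F_{q^d}$ biject with the divisors of $d$.

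Assembling this: in the first two cases $\dim_k\bar\Gamma=2$ and $\bar\Gamma\subseteq(\mathfrak P:\mathfrak P)/\mathfrak P$, so $\Gamma/\mathfrak P$ is a two-dimensional $\Lambda/\mathfrak P$-subspace of $(\mathfrak P:\mathfrak P)/\mathfrak P$; in the third case $\Lambda/\mathfrak P\subseteq\Gamma/\mathfrak P$ is an extension of finite fields of prime degree. For the last assertion, a prime of $\Gamma$ lies over $\mathfrak P$ precisely when it contains the conductor $\mathfrak P$ (using that $\mathfrak P$ is maximal in $\Lambda$), hence corresponds to a maximal ideal of the Artinian ring $\bar\Gamma$; there are two of these in the first case and one in the other two, so at most two altogether. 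I expect the only genuinely delicate point to be the field-extension case: the hypothesis that $R$, hence $k$, is residually finite is essential there, since over an arbitrary field a minimal field extension need not have prime degree and the dichotomy would break down; the remainder is the routine manufacture of forbidden intermediate subrings.
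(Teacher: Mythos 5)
Your proof is correct and follows the same route as the paper's: pass to the quotient by the conductor $\mathfrak P$, observe that $\Lambda/\mathfrak P \subseteq \Gamma/\mathfrak P$ is a minimal ring extension of the finite field $k = \Lambda/\mathfrak P$ sitting inside $(\mathfrak P:\mathfrak P)/\mathfrak P$ by Proposition~\ref{prop:commin}, classify such extensions into $k\times k$, $k[\varepsilon]/(\varepsilon^2)$, or a finite field extension, and read off both the dichotomy and the prime count from the Artinian ring $\Gamma/\mathfrak P$. The only difference is that the paper cites \cite[Lemme 1.2]{Ferrand1970} as a black box for the classification, whereas you reprove it from scratch by exhibiting forbidden intermediate subrings — a more self-contained presentation of what is, at bottom, the same argument.
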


\begin{proof}
  First of all, notice that $\Gamma/\mathfrak P$ is a subspace of $(\mathfrak P : \mathfrak P)/\Lambda$.
  Consider the extension $\Lambda/\mathfrak P\rightarrow \Gamma/\mathfrak P$. By \cite[Lemme 1.2]{Ferrand1970}, it follows that $\Gamma/\mathfrak P$ is either a  two-dimensional $\Lambda/\mathfrak P$-vector space or it is a minimal field extension. As $R$ is residually finite, it is a minimal extension of finite fields and it must therefore have prime degree. The statement regarding the number of prime ideals lying over $\mathfrak P$ follows from the characterization of~\cite[Lemme 1.2]{Ferrand1970}: in the first and third case, we clearly have only one prime ideal lying over $\mathfrak P$, in the second there are two.
\end{proof}

\begin{remark}\label{rem:nomeataxe}
  Thus, to find minimal $\mathfrak p$-overorders of $\Lambda$ it is sufficient to compute minimal overorders of $\Lambda$ contained in $(\mathfrak P : \mathfrak P)$, where $\mathfrak P$ is a prime ideal of $\Lambda$ over $\mathfrak p$.
  As the quotient $(\mathfrak P : \mathfrak P)/\Lambda$ is a $\Lambda/\mathfrak P$-vector space and $\Lambda/\mathfrak P$ a finite field, we can compute the $\Lambda$-modules $\Gamma$ with $\Lambda \subseteq \Gamma \subseteq (\mathfrak P : \mathfrak P)$ directly using linear algebra and without using the stable subgroups routine.
  %
  Note that $\dim_{\Lambda/\mathfrak P}(\Gamma/\mathfrak P) = 2$ is equivalent to $\dim_{\Lambda/\mathfrak P}(\Gamma/\Lambda) = 1$.
  Thus Proposition~\ref{prop:commin2} implies that part of the minimal overorders with conductor $\mathfrak P$ can be determined from the one-dimensional $\Lambda/\mathfrak P$-subspaces of $(\mathfrak P : \mathfrak P)/\Lambda$.
  Once we have computed all the one-dimensional subspaces of $(\mathfrak P : \mathfrak P)/\Lambda$, we have to check whether they are orders or not. 
  This can be checked easily by computing one multiplication and testing a membership of an element in a submodule.
  \end{remark}
  
  \begin{lemma}\label{lem:powering}
    Let $V = \langle x \rangle$ be a one-dimensional subspace of $(\mathfrak P : \mathfrak P)/\Lambda$. Then $V$ corresponds to an order if and only if $x^2 \in V$.
  \end{lemma}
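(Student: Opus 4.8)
The plan is to translate the statement "$V$ corresponds to an order" into the concrete condition that the preimage of $V$ in $(\mathfrak P : \mathfrak P)$ is closed under multiplication, and then to reduce this to a single test on the generator $x$. Write $\Gamma$ for the preimage of $V = \langle x \rangle$ under the projection $(\mathfrak P : \mathfrak P) \to (\mathfrak P : \mathfrak P)/\Lambda$; thus $\Gamma = \Lambda + \Lambda x$ (lifting $x$ to an element of $(\mathfrak P : \mathfrak P)$ and abusing notation). Since $\Lambda \subseteq \Gamma \subseteq (\mathfrak P : \mathfrak P) \subseteq \bar\Lambda$ and $\Gamma$ is already an $R$-submodule of $A$ of full rank (it contains $\Lambda$), the only thing preventing $\Gamma$ from being an overorder is the failure of multiplicative closure. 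By the last lemma of Section~\ref{sec:gen} (the generating-set criterion for the ring property), $\Gamma = \Lambda + \Lambda x$ is a ring if and only if the products of elements of a generating set lie in $\Gamma$; here $\Gamma$ is generated as a $\Lambda$-module by $1$ and $x$, so the only product that needs checking is $x \cdot x = x^2$.

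First I would verify the easy direction: if $\Gamma$ is an order then $x^2 \in \Gamma$, which means the image of $x^2$ in $(\mathfrak P : \mathfrak P)/\Lambda$ lies in $V$, i.e.\ $x^2 \in V$ in the notation of the lemma. Conversely, suppose $x^2 \in V$, i.e.\ $x^2 \in \Lambda + \Lambda x$. I claim $\Gamma = \Lambda + \Lambda x$ is closed under multiplication. Indeed a general element is $a + bx$ with $a, b \in \Lambda$, and $(a + bx)(c + dx) = ac + (ad + bc)x + bd\,x^2$; the first two summands lie in $\Gamma$ trivially, and $bd\,x^2 \in \Gamma$ because $x^2 \in \Gamma$ and $\Gamma$ is a $\Lambda$-module (using commutativity of $A$, so that $bd \in \Lambda$ multiplies $x^2 \in \Gamma$ back into $\Gamma$). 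Hence $\Gamma$ is a subring of $A$ containing $\Lambda$ with $\Gamma/\Lambda$ finite, i.e.\ an overorder; and it is an $R$-order since it is finitely generated and projective over the Dedekind domain $R$ (being a full-rank $R$-submodule of the $R$-order $\bar\Lambda$, hence a lattice). This establishes the equivalence.

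There is no serious obstacle here; the only point requiring a little care is the bookkeeping between $V$ as a subspace of $(\mathfrak P : \mathfrak P)/\Lambda$ and $\Gamma$ as a submodule of $(\mathfrak P : \mathfrak P)$ — in particular the observation that $\dim_{\Lambda/\mathfrak P} V = 1$ forces $\Gamma$ to be generated over $\Lambda$ by a single extra element $x$, so that multiplicative closure is a one-equation condition. I would state the argument at the level of the module $\Gamma = \Lambda + \Lambda x$ and invoke the generating-set lemma to conclude, keeping the proof to a few lines.
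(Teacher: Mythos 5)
The paper gives no proof for this lemma; it is stated as an immediate consequence of the preceding discussion and of Lemma~2.3 (the generating-set criterion for the ring property). Your argument is correct and fills in exactly the details the authors take for granted: the preimage $\Gamma$ of $V$ is $\Lambda + \Lambda\tilde{x}$ for any lift $\tilde{x}$, and since $\Gamma$ is generated as a $\Lambda$-module by $1$ and $\tilde{x}$, the only nontrivial product to test is $\tilde{x}^2$, whence the condition $x^2 \in V$. Your direct expansion of $(a+bx)(c+dx)$ makes the converse direction self-contained rather than citing Lemma~2.3; both routes are fine, though one could note that the statement implicitly asserts $x^2$ is well defined modulo $\Lambda$ up to $V$ --- changing the lift $\tilde{x}$ by $\lambda \in \Lambda$ alters $\tilde{x}^2$ by $2\lambda\tilde{x} + \lambda^2$, whose class lies in $V$ --- so the membership test is independent of the lift. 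This is a small point worth recording, but your argument as written is sound.
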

  
  Even if this method is efficient, we have to apply it for every subspace, and the number of subspaces could be large, depending on the prime and the dimension of $(\mathfrak P : \mathfrak P)/\Lambda$.
  
  \begin{remark}\label{rem:ptwo}
  Inspired by the observation above, we want to give a criterion to reduce the number of subspaces.
  Let $q$ be the cardinality of the residue field of $\mathfrak P$.
  Then the map 
  \[
    \begin{matrix}
      \varphi_q \colon & (\mathfrak P : \mathfrak P)/\Lambda & \longrightarrow & (\mathfrak P : \mathfrak P)/\Lambda \\
                   & x & \longmapsto & x^q
    \end{matrix}
  \]
  is a linear map. By Lemma~\ref{lem:powering} a subspace of $(\mathfrak P : \mathfrak P)/\Lambda$ corresponds to an order only if it is invariant under the map $\varphi_q$.
  Therefore, instead of computing the one-dimensional subspaces of $(\mathfrak P : \mathfrak P)/\Lambda$, it is sufficient to determine one-dimensional subspaces of the eigenspaces of $\varphi_q$.
  Note that when $q = 2$, this is also a sufficient condition: Every subspace of an eigenspace of $\varphi_2$ corresponds to an order.
  In particular, if $\varphi_q$ has no eigenvectors, no one-dimensional vector space will be multiplicatively closed.
  See Example~\ref{example3} for an application of this criterion.
  \end{remark}

\begin{algorithm}\label{alg:pover2}
  Let $\Lambda$ be an order in an étale $K$-algebra and $\mathfrak p$ a non-zero prime ideal of $\Lambda$. The following steps return all $\mathfrak p$-overorders of $\Lambda$.
  \begin{enumerate}
    \item
      Set $\mathcal T = \mathcal S = \{ \Lambda \}$.
    \item
      While $\mathcal S \neq \emptyset$ do the following:
      \begin{enumerate}
        \item
          Remove an element $\Gamma$ of $\mathcal S$.
        \item
          Determine the prime ideals $\mathfrak P_1,\dotsc,\mathfrak P_s$ of $\Lambda$ with $\mathfrak P_i \cap R = \mathfrak p$.
          Determine the set $\mathcal N$ of all minimal overrings of $\Gamma$ with conductor $\mathfrak P_i$ for some $i \in \{1,\dotsc,r\}$, as described in Remark~\ref{rem:nomeataxe}.
        \item
          Replace $\mathcal T$ by $\mathcal T \cup \mathcal N$ and $\mathcal S$ by $\mathcal S \cup \mathcal N$.
      \end{enumerate}
    \item
      Return $\mathcal T$.
  \end{enumerate}
\end{algorithm}

\begin{remark}\hfill
  \begin{enumerate}
    \item
  There are various ways to determine the prime ideals $\mathfrak P_1,\dotsc,\mathfrak P_s$ of $\Lambda$ lying over $\mathfrak p$. 
  The most basic approach is to determine the radical $\sqrt{\mathfrak p \Lambda}$ and to split the finite semisimple $R/\mathfrak p$-algebra $\Lambda/\sqrt{\mathfrak p \Lambda} \cong A_1 \times \dotsb \times A_s$. The prime ideals can then be recovered as the kernels of the projection onto $A_i$ (see~\cite{Friedrichs2000}).
  If one already knows the maximal order $\bar \Lambda$ and the prime ideals of $\bar \Lambda$ lying over $\mathfrak p$, one can obtain the $\mathfrak P_i$ by intersecting with $\Lambda$. Note that this quite efficient in case one needs to repeat the process with the same prime ideal $\mathfrak p$ and different orders $\Lambda$.
\item
  While the overall strategy of Algorithms~\ref{alg:pover} and~\ref{alg:pover2}
  are the same, the main difference is \textit{where} we look for minimal
  overorders.  In Algorithm~\ref{alg:pover}, the minimal overrings are
  determined from the minimal submodules of $\mathfrak
  p^{-1}\Lambda/\Lambda$, where in Algorithm~\ref{alg:pover2}, the minimal
  overrings are determined from minimal submodules of $(\mathfrak P_i :
  \mathfrak P_i)/\Lambda$, where $\mathfrak P_1,\dotsc,\mathfrak P_s$ are
  the prime ideals of $\Lambda$ lying above $\mathfrak p$.
  \end{enumerate}
\end{remark}

We now show that also from a point of dimensions it is indeed more efficient to consider $(\mathfrak P_i : \mathfrak P_i)/\Lambda$, $i \in \{1,\dotsc,r\}$, instead of $\mathfrak p^{-1}\Lambda/\Lambda$.

\begin{lemma}\label{lem:pregras}
  Let $\Lambda$ be an $R$-order of $A$.
\begin{enumerate}
  \item
    If $\mathfrak P$, $\mathfrak Q$ are ideals of $\Lambda$, then $(\mathfrak P : \mathfrak P) + (\mathfrak Q : \mathfrak Q) \subseteq (\mathfrak P \mathfrak Q : \mathfrak P \mathfrak Q)$.
  \item
  Let $\mathfrak P_1,\dotsc,\mathfrak P_r$ be pairwise coprime ideals of $\Lambda$.
  Then for $1 \leq i \leq r$ we have $(\mathfrak P_i \colon \mathfrak P_i) \cap \sum_{j \neq i} (\mathfrak P_j : \mathfrak P_j) = \Lambda$.
\end{enumerate}
\end{lemma}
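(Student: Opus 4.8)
The plan is to treat the two parts separately, with part (i) being a direct computation and part (ii) relying on the Chinese Remainder Theorem together with part (i). For part (i), I would take $x \in (\mathfrak P : \mathfrak P)$ and $y \in (\mathfrak Q : \mathfrak Q)$ and show $x + y \in (\mathfrak P \mathfrak Q : \mathfrak P \mathfrak Q)$. A general element of $\mathfrak P \mathfrak Q$ is a finite sum $\sum_k p_k q_k$ with $p_k \in \mathfrak P$, $q_k \in \mathfrak Q$; since $A$ is commutative, $(x+y) p_k q_k = x p_k q_k + y p_k q_k$, and $x p_k \in \mathfrak P$ forces $x p_k q_k \in \mathfrak P \mathfrak Q$, while $y q_k \in \mathfrak Q$ forces $y p_k q_k \in \mathfrak P \mathfrak Q$. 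Summing over $k$ gives $(x+y)\big(\sum_k p_k q_k\big) \in \mathfrak P \mathfrak Q$, so $x + y \in (\mathfrak P \mathfrak Q : \mathfrak P \mathfrak Q)$. Since both $(\mathfrak P : \mathfrak P)$ and $(\mathfrak Q : \mathfrak Q)$ are rings (hence in particular contain $\Lambda$ and are closed under the relevant operations), and the displayed inclusion of sets of the form $x+y$ generates the $\Lambda$-module sum, this yields the claimed containment $(\mathfrak P : \mathfrak P) + (\mathfrak Q : \mathfrak Q) \subseteq (\mathfrak P \mathfrak Q : \mathfrak P \mathfrak Q)$.

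For part (ii), fix $i$ and set $N = (\mathfrak P_i : \mathfrak P_i) \cap \sum_{j \neq i} (\mathfrak P_j : \mathfrak P_j)$. The inclusion $\Lambda \subseteq N$ is immediate since $\Lambda \subseteq (\mathfrak P_k : \mathfrak P_k)$ for every $k$. For the reverse inclusion, I would first apply part (i) inductively to the pairwise coprime ideals $\{\mathfrak P_j\}_{j \neq i}$ to get $\sum_{j \neq i} (\mathfrak P_j : \mathfrak P_j) \subseteq \big(\prod_{j \neq i}\mathfrak P_j : \prod_{j \neq i}\mathfrak P_j\big)$; write $\mathfrak Q = \prod_{j\neq i} \mathfrak P_j$. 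So it suffices to show $(\mathfrak P_i : \mathfrak P_i) \cap (\mathfrak Q : \mathfrak Q) \subseteq \Lambda$ when $\mathfrak P_i$ and $\mathfrak Q$ are coprime, i.e.\ $\mathfrak P_i + \mathfrak Q = \Lambda$. Take $x$ in the intersection. Then $x \mathfrak P_i \subseteq \mathfrak P_i \subseteq \Lambda$ and $x \mathfrak Q \subseteq \mathfrak Q \subseteq \Lambda$, hence $x(\mathfrak P_i + \mathfrak Q) \subseteq \Lambda$; but $\mathfrak P_i + \mathfrak Q = \Lambda$, so $x = x \cdot 1 \in \Lambda$. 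This completes the argument.

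I expect the only real subtlety to be bookkeeping: in part (ii) one must be careful that coprimality of the $\mathfrak P_j$ is preserved under taking products (so that $\mathfrak P_i$ and $\mathfrak Q = \prod_{j \neq i}\mathfrak P_j$ are indeed coprime), which follows from the standard fact that if $\mathfrak P_i$ is coprime to each $\mathfrak P_j$ for $j \neq i$ then it is coprime to their product. One should also note that the step $x(\mathfrak P_i + \mathfrak Q)\subseteq \Lambda \implies x \in \Lambda$ uses $1 \in \mathfrak P_i + \mathfrak Q$, which is exactly coprimality. None of this requires anything beyond elementary commutative algebra, so the main "obstacle" is really just organizing the induction in part (ii) cleanly; the content is entirely in the one-line computation of part (i).
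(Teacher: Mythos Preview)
Your proposal is correct and follows essentially the same approach as the paper: part~(i) is the same elementwise computation using commutativity, and part~(ii) reduces to the two-ideal case via part~(i) and then uses a decomposition $1 = e_1 + e_2$ with $e_1 \in \mathfrak P_i$, $e_2 \in \mathfrak Q$ to conclude $x = xe_1 + xe_2 \in \Lambda$. Your bookkeeping remark about coprimality of $\mathfrak P_i$ and $\prod_{j\neq i}\mathfrak P_j$ is exactly the point the paper leaves implicit.
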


\begin{proof}
  (i): Let $x \in (\mathfrak P : \mathfrak P)$ and $y \in (\mathfrak Q : \mathfrak Q)$.
  Then $x\mathfrak P \mathfrak Q \subseteq \mathfrak P \mathfrak Q$ and similarly $y \mathfrak P \mathfrak Q \subseteq \mathfrak P \mathfrak Q$, since $\Lambda$ is commutative.
  Hence $(x + y)\mathfrak P \mathfrak Q \subseteq \mathfrak P \mathfrak Q$, that is, $x + y \in (\mathfrak P \mathfrak Q : \mathfrak P \mathfrak Q)$.
  
  (ii): We prove this by induction on $r$, considering first the case $r = 2$. It is clear that $\Lambda$ is
        contained in the intersection. As $\mathfrak P_1,\mathfrak P_2$ are coprime, there exist $e_1 \in \mathfrak P_1$, $e_2 \in \mathfrak P_2$ with $1 = e_1
        + e_2$. Hence if $x \in \mathfrak (\mathfrak P_1 : \mathfrak P_1) \cap
        (\mathfrak P_1 : \mathfrak P_2)$, then $xe_i \in \mathfrak P_i \subseteq
        \Lambda$ for $i \in \{1,2\}$ and $x = xe_1 + xe_2 \in \Lambda$.
        Now using (i), in the general case we have
        \[ (\mathfrak P_i : \mathfrak P_i) \cap \sum_{j \neq i} (\mathfrak P_j : \mathfrak P_j) \subseteq (\mathfrak P_i : \mathfrak P_i) \cap \bigl(\prod_{j \neq i} \mathfrak P_j : \prod_{j \neq i} \mathfrak P_j\bigr) \subseteq \Lambda,\]
        where the last inclusion follows from the base case $r = 2$ as $\mathfrak P_i$ and $\prod_{j \neq i} \mathfrak P_j$ are coprime.
\end{proof}

\begin{prop}
  Let $\mathfrak p$ be a prime ideal of $R$ and $\mathfrak P_1,\dotsc,\mathfrak P_s$ the prime ideals of $\Lambda$ with $\mathfrak P_i \cap R = \mathfrak p$ for $1 \leq i \leq s$.
  Then \[ \sum_{i=1}^r \dim_{\Lambda/\mathfrak P_i}((\mathfrak P_i : \mathfrak P_i)/\Lambda) \leq \sum_{i=1}^r \dim_{R/\mathfrak p}((\mathfrak P_i : \mathfrak P_i)/\Lambda) \leq \dim_K(A) = \dim_{R/\mathfrak p}(\mathfrak p^{-1}\Lambda/\Lambda). \]
\end{prop}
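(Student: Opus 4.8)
The plan is to verify the chain of relations one link at a time; only the middle inequality carries real content. For the rightmost equality $\dim_K(A)=\dim_{R/\mathfrak p}(\mathfrak p^{-1}\Lambda/\Lambda)$ I would use that $\Lambda$ is a projective $R$-module of rank $\dim_K(A)$: tensoring the exact sequence $0\to R\to\mathfrak p^{-1}\to\mathfrak p^{-1}/R\to 0$ with the flat module $\Lambda$ identifies $\mathfrak p^{-1}\Lambda/\Lambda$ with $(\mathfrak p^{-1}/R)\otimes_R\Lambda$, and since $\mathfrak p^{-1}/R$ is a one-dimensional $R/\mathfrak p$-vector space (hence $\cong R/\mathfrak p$ as an $R$-module) this is $\Lambda/\mathfrak p\Lambda$, a free $R/\mathfrak p$-module of rank $\rank_R(\Lambda)=\dim_K(A)$; one could equally localize at $\mathfrak p$ and use a uniformizer. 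For the leftmost inequality, note $\mathfrak P_i(\mathfrak P_i:\mathfrak P_i)\subseteq\mathfrak P_i\subseteq\Lambda$, so each $(\mathfrak P_i:\mathfrak P_i)/\Lambda$ is a vector space over the field $\Lambda/\mathfrak P_i$, which is a finite extension of $R/\mathfrak p$ because $\Lambda$ is module-finite and residually finite over $R$ and $\mathfrak P_i$ lies over the maximal ideal $\mathfrak p$; as $\dim_{R/\mathfrak p}(W)=[\Lambda/\mathfrak P_i:R/\mathfrak p]\cdot\dim_{\Lambda/\mathfrak P_i}(W)\geq\dim_{\Lambda/\mathfrak P_i}(W)$ for any $\Lambda/\mathfrak P_i$-vector space $W$, the leftmost inequality follows term by term.

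The middle inequality is the crux, and I would deduce it from two facts. First, each $(\mathfrak P_i:\mathfrak P_i)$ lies in $\mathfrak p^{-1}\Lambda$: from $\mathfrak p=\mathfrak P_i\cap R\subseteq\mathfrak P_i$ we get $\mathfrak p\,(\mathfrak P_i:\mathfrak P_i)\subseteq\mathfrak P_i\subseteq\Lambda$, and multiplying by $\mathfrak p^{-1}$ yields $(\mathfrak P_i:\mathfrak P_i)\subseteq\mathfrak p^{-1}\Lambda$. Second, the $\mathfrak P_i$ are pairwise distinct maximal ideals of $\Lambda$, hence pairwise coprime, so Lemma~\ref{lem:pregras}(ii) gives $(\mathfrak P_i:\mathfrak P_i)\cap\sum_{j\neq i}(\mathfrak P_j:\mathfrak P_j)=\Lambda$ for every $i$. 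Passing to quotients by $\Lambda$ (which is contained in every module occurring), the second fact says precisely that the submodules $M_i:=(\mathfrak P_i:\mathfrak P_i)/\Lambda$ of $M:=\bigl(\sum_i(\mathfrak P_i:\mathfrak P_i)\bigr)/\Lambda$ satisfy $M_i\cap\sum_{j\neq i}M_j=0$, i.e. $M=\bigoplus_i M_i$. Each $M_i$, and hence $M$, is killed by $\mathfrak p$, so this is a direct sum of $R/\mathfrak p$-vector spaces and $\sum_i\dim_{R/\mathfrak p}(M_i)=\dim_{R/\mathfrak p}(M)$. By the first fact $M$ is a subspace of $\mathfrak p^{-1}\Lambda/\Lambda$, whence $\dim_{R/\mathfrak p}(M)\leq\dim_{R/\mathfrak p}(\mathfrak p^{-1}\Lambda/\Lambda)=\dim_K(A)$, closing the chain.

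I do not anticipate a serious obstacle. The points needing a moment of care are that the internal-direct-sum statement of Lemma~\ref{lem:pregras}(ii) is preserved upon quotienting by $\Lambda$, and that every quotient in sight is genuinely a vector space over the field claimed. Once the two containments $\mathfrak p\subseteq\mathfrak P_i$ and $(\mathfrak P_i:\mathfrak P_i)\subseteq\mathfrak p^{-1}\Lambda$ are in place, only routine dimension bookkeeping remains, nothing beyond the fact that a subspace has dimension at most that of the ambient space.
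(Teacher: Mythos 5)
Your proof is correct and, for the substantive middle inequality, follows exactly the paper's route: show $\mathfrak p(\mathfrak P_i:\mathfrak P_i)\subseteq\mathfrak P_i\subseteq\Lambda$ to get $(\mathfrak P_i:\mathfrak P_i)\subseteq\mathfrak p^{-1}\Lambda$, then invoke Lemma~\ref{lem:pregras}(ii) to conclude the sum of the subspaces $(\mathfrak P_i:\mathfrak P_i)/\Lambda$ inside $\mathfrak p^{-1}\Lambda/\Lambda$ is direct and compare dimensions. You additionally spell out the outer inequality and the equality $\dim_K(A)=\dim_{R/\mathfrak p}(\mathfrak p^{-1}\Lambda/\Lambda)$, which the paper treats as immediate; those verifications are correct and unobjectionable.
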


\begin{proof}
  For all $i \in \{1,\dotsc,s\}$ we have $\mathfrak p (\mathfrak P_i : \mathfrak P_i) \subseteq \mathfrak P_i \subseteq \Lambda$, hence $(\mathfrak P_i : \mathfrak P_i) \subseteq \mathfrak p^{-1}\Lambda$. Lemma~\ref{lem:pregras} implies that the sum of the subspaces $(\mathfrak P_i : \mathfrak P_i)/\Lambda$ is a direct sum. Therefore the sum of their dimensions must be lower than the dimension of the entire space. This proves the last inequality.
\end{proof}

\subsection{$\mathfrak P$-overorders}\label{subsec:splitbeta}

Our aim is to give a refined version of Algorithm~\ref{alg:pover2}.
The idea is to decompose the set of $\mathfrak p$-overorders, this time using the prime ideals of the order $\Lambda$ itself and not the base ring $R$. This strategy allows us to reduce the number of times we compute an overorder and therefore improves the algorithm.
In the following we will make use of classical results on associated ideals and primary decomposition of ideals and modules over noetherian rings,
that can be found for example in~\cite[IV.\S 2.1]{Bourbaki1972}.

\begin{definition}
  Let $\mathfrak P$ be a prime ideal of $\Lambda$. An overorder $\Lambda \subseteq \Gamma$ is called a $\mathfrak P$-overorder,
  if the conductor $(\Lambda : \Gamma)$ is $\mathfrak P$-primary.
  More general, if $S$ is a set of prime ideals of $\Lambda$ we call $\Gamma$ an $S$-overorder, if the associated prime ideals of the primary decomposition of the conductor $(\Lambda : \Gamma)$ in $\Lambda$ are contained in $S$.
\end{definition}

\begin{lemma}\label{lem:primary}
  Let $\Lambda \subseteq \Gamma$ be an extension of orders, $\mathfrak P$ a
  prime ideal and $S$ a set of prime ideals of $\Lambda$. Then the following
  hold:
  \begin{enumerate}
    \item
      The order $\Gamma$ is
      a $\mathfrak P$-overorder if and only if the quotient $\Gamma /\Lambda$ is
      $\mathfrak P$-primary.
    \item
      The order $\Gamma$ is
      an $S$-overorder if and only if the associated prime ideals of $\Gamma / \Lambda$ are contained in $S$.
  \end{enumerate}
\end{lemma}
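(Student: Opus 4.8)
The plan is to reduce both parts to a single statement about associated primes. The crucial first observation is that the conductor coincides with the module annihilator: since $1 \in \Gamma$, any $x \in A$ with $x\Gamma \subseteq \Lambda$ lies in $\Lambda$, so $(\Lambda : \Gamma) \subseteq \Lambda$, and then for $x \in \Lambda$ the two conditions $x\Gamma \subseteq \Lambda$ and $x \cdot (\Gamma/\Lambda) = 0$ are identical. Hence $(\Lambda : \Gamma) = \Ann_\Lambda(\Gamma/\Lambda) =: I$. With this, part (i) asserts that $I$ is $\mathfrak P$-primary if and only if $\Ass_\Lambda(\Gamma/\Lambda) = \{\mathfrak P\}$, and part (ii) asserts that $\Ass_\Lambda(\Lambda/I) \subseteq S$ if and only if $\Ass_\Lambda(\Gamma/\Lambda) \subseteq S$; both follow at once from the single equality $\Ass_\Lambda(\Gamma/\Lambda) = \Ass_\Lambda(\Lambda/I)$.

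To prove this equality I would argue that both sides equal $V(I)$. For a finitely generated module $M$ over the noetherian ring $\Lambda$ one has $\mathrm{Supp}_\Lambda(M) = V(\Ann_\Lambda(M))$, so $\mathrm{Supp}_\Lambda(\Gamma/\Lambda) = V(I) = \mathrm{Supp}_\Lambda(\Lambda/I)$. The point is that every prime in $V(I)$ is maximal: since $\Gamma/\Lambda$ is a torsion $R$-module it is annihilated by some nonzero $r \in R$, and such an $r$ is a non-zero-divisor of the $R$-order $\Lambda$; thus $I$ contains a non-zero-divisor and is therefore not contained in any minimal prime of $\Lambda$, so every prime of $V(I)$ has height at least one, hence is maximal because $\Lambda$ is one-dimensional. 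In any finitely generated module the minimal elements of the support are associated, and maximal ideals are incomparable, so if the support consists entirely of maximal ideals then $\Ass = \mathrm{Supp}$. Applying this to $\Gamma/\Lambda$ and to $\Lambda/I$ gives $\Ass_\Lambda(\Gamma/\Lambda) = V(I) = \Ass_\Lambda(\Lambda/I)$, as desired.

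It then remains to unwind the definitions, using the standard facts on primary decomposition over noetherian rings (see \cite{Bourbaki1972}): an ideal $I$ is $\mathfrak P$-primary exactly when $\Ass_\Lambda(\Lambda/I) = \{\mathfrak P\}$, the associated primes occurring in a primary decomposition of $I$ are precisely the elements of $\Ass_\Lambda(\Lambda/I)$, the module $\Gamma/\Lambda$ is $\mathfrak P$-primary exactly when $\Ass_\Lambda(\Gamma/\Lambda) = \{\mathfrak P\}$, and its associated primes are the elements of $\Ass_\Lambda(\Gamma/\Lambda)$. Combining these with $\Ass_\Lambda(\Gamma/\Lambda) = \Ass_\Lambda(\Lambda/I)$ yields (i) and (ii) simultaneously. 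The one step that genuinely needs care — and where a naive argument would be wrong — is the equality $\Ass_\Lambda(\Gamma/\Lambda) = \Ass_\Lambda(\Lambda/(\Lambda : \Gamma))$: it fails for general modules and ideals and relies essentially on $\Lambda$ being one-dimensional together with the conductor containing a non-zero-divisor, so that all relevant primes are maximal and "associated" and "in the support" coincide.
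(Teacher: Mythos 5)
Your proof is correct and takes essentially the same approach as the paper: both reduce the claim to identifying $\Ass_\Lambda(\Gamma/\Lambda)$ with the associated primes of the conductor $(\Lambda:\Gamma)$, using that $\Gamma/\Lambda$ has finite length (equivalently, that all relevant primes are maximal) so that support and associated primes coincide. Your writeup is a bit more explicit than the paper's about where $\dim\Lambda=1$ and the non-zero-divisor in the conductor enter, steps the paper leaves implicit when it passes from ``radical equals $\mathfrak P$'' to ``$\mathfrak P$-primary.''
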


\begin{proof}
  (i): Assume first that $\Gamma$ is a $\mathfrak P$-overorder.
  By definition, the conductor $(\Gamma:\Lambda)$ is then a $\mathfrak P$-primary ideal. This means that for every element $x\in\Gamma/\Lambda$ the annihilator $\Ann(x)$ is contained in $\mathfrak P$ and contains $(\Gamma:\Lambda)$. Therefore the only associated prime of $\Gamma/\Lambda$ is $\mathfrak P$ by~\cite[IV.\S 1.1, Proposition 2]{Bourbaki1972}. Assume now that the quotient $\Gamma /\Lambda$ is $\mathfrak P$-primary, so that $\mathfrak P$ is the only associated prime by~\cite[IV.\S 2.5, Corollary 1]{Bourbaki1972}. As $\Gamma/\Lambda$ has finite length, its support is equal to the set of associated primes. This means that $(\Lambda:\Gamma)$ is contained in $\mathfrak P$ and it is not contained in any other prime ideal of $\Lambda$. Therefore the radical of $(\Lambda:\Gamma)$ is equal to $\mathfrak P$ by~\cite[IV.\S 2.1, Example 2]{Bourbaki1972}.
  (ii): As in (i).
\end{proof}

\begin{theorem}\label{thm:Pdec}
  Let $S = \{\mathfrak P_1,\dotsc,\mathfrak P_r\}$ be a set of prime ideals of $\Lambda$ and $\Gamma$ an $S$-overorder of $\Lambda$.
  Then there exist unique $\mathfrak P_i$-overorders $\Gamma_i$ of $\Gamma$, $i \in \{1,\dotsc,r\}$, such that
  \[ \Gamma = \Gamma_1 + \Gamma_2 + \dotsb + \Gamma_r. \]
  More precisely, $\Gamma_i = \{ x \in \Gamma \mid \mathfrak P_i^k x \subseteq \Lambda \text{ for some $k$}\}$.
\end{theorem}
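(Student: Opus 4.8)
The plan is to mimic the proof of Proposition~\ref{prop:splitded}, replacing the primary decomposition of torsion modules over the Dedekind domain $R$ by the primary decomposition of the finite-length module $M := \Gamma/\Lambda$ over the noetherian ring $\Lambda$. By Lemma~\ref{lem:primary}~(ii), since $\Gamma$ is an $S$-overorder, the associated primes of $M$ lie in $S$; because $M$ has finite length each of them is maximal. Discarding those $\mathfrak P_i \in S$ that are not associated to $M$ (they will only contribute the trivial summand $\Lambda$, as will be clear below), I may assume $\{\mathfrak P_1,\dotsc,\mathfrak P_r\}$ is exactly the set of associated primes of $M$, so that $\operatorname{supp}(M) = \{\mathfrak P_1,\dotsc,\mathfrak P_r\}$ is a finite set of pairwise comaximal maximal ideals.

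First I would record the module decomposition. For each $i$ set $M[\mathfrak P_i^\infty] = \{\,\bar x \in M \mid \mathfrak P_i^k \bar x = 0 \text{ for some } k\,\}$. Since $\Lambda$ is noetherian we have $\prod_{i} \mathfrak P_i^{k} \subseteq \sqrt{\Ann(M)}^{\,k} \subseteq \Ann(M) = (\Lambda : \Gamma)$ for a suitable $k$; the ideals $\mathfrak P_i^{k}$ are pairwise comaximal, so by the Chinese remainder theorem $\Lambda/\prod_i \mathfrak P_i^{k} \cong \prod_i \Lambda/\mathfrak P_i^{k}$, and the orthogonal idempotents of the right-hand side act on $M$ and cut it into the internal direct sum $M = \bigoplus_{i=1}^{r} M[\mathfrak P_i^\infty]$ of its $\mathfrak P_i$-primary components; this is just the primary decomposition of modules over noetherian rings in the finite-length case, see \cite[IV.\S 2]{Bourbaki1972}.

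Now define $\Gamma_i$ to be the preimage of $M[\mathfrak P_i^\infty]$ under the projection $\Gamma \to M = \Gamma/\Lambda$, so that $\Lambda \subseteq \Gamma_i \subseteq \Gamma$ and $\Gamma_i = \{\,x \in \Gamma \mid \mathfrak P_i^k x \subseteq \Lambda \text{ for some } k\,\}$, which is the asserted formula. I would then check that $\Gamma_i$ is a ring: if $x, y \in \Gamma_i$ with $\mathfrak P_i^k x \subseteq \Lambda$ and $\mathfrak P_i^\ell y \subseteq \Lambda$, then for $a \in \mathfrak P_i^k$ and $b \in \mathfrak P_i^\ell$ one has $ab\,xy = (ax)(by) \in \Lambda$ by commutativity, whence $\mathfrak P_i^{k+\ell}\,xy \subseteq \Lambda$ and $xy \in \Gamma_i$; thus $\Gamma_i$ is an overorder of $\Lambda$ contained in $\Gamma$. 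Since $\Gamma_i/\Lambda = M[\mathfrak P_i^\infty]$ is $\mathfrak P_i$-primary, Lemma~\ref{lem:primary}~(i) shows $\Gamma_i$ is a $\mathfrak P_i$-overorder. The equality $\Gamma = \Gamma_1 + \dotsb + \Gamma_r$ is immediate from $M = \sum_i M[\mathfrak P_i^\infty]$. For uniqueness, suppose $\Gamma = \Gamma_1' + \dotsb + \Gamma_r'$ with each $\Gamma_i'$ a $\mathfrak P_i$-overorder of $\Lambda$ contained in $\Gamma$; then $\Gamma_i'/\Lambda$ is $\mathfrak P_i$-primary by Lemma~\ref{lem:primary}~(i), hence killed by a power of $\mathfrak P_i$, so $\Gamma_i'/\Lambda \subseteq M[\mathfrak P_i^\infty] = \Gamma_i/\Lambda$; projecting the equality $\sum_i \Gamma_i'/\Lambda = M = \bigoplus_i \Gamma_i/\Lambda$ onto the $i$-th summand of the direct sum forces $\Gamma_i'/\Lambda = \Gamma_i/\Lambda$, i.e.\ $\Gamma_i' = \Gamma_i$.

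The ring-closedness of $\Gamma_i$ and the uniqueness bookkeeping are routine; the only real content is the module-theoretic decomposition $M = \bigoplus_i M[\mathfrak P_i^\infty]$, that is, that the primary decomposition of the finite-length $\Lambda$-module $\Gamma/\Lambda$ splits it as an \emph{internal} direct sum of its $\mathfrak P_i$-primary submodules, and that the $\mathfrak P_i$-primary component is exactly $\{x : \mathfrak P_i^k x = 0\}$. This is where commutativity of $\Lambda$ is essential — there is no analogue for general noncommutative orders — and I expect getting this decomposition precisely right to be the main (though standard) point of the argument.
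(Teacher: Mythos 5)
Your proof is correct and follows essentially the same route as the paper: both reduce to the primary (coprimary) decomposition of the finite-length $\Lambda$-module $\Gamma/\Lambda$, take $\Gamma_i$ to be the preimage of the $\mathfrak P_i$-primary component, and then observe that this gives the stated formula, the ring property, and uniqueness. The only cosmetic difference is that you re-derive the decomposition by hand via the Chinese remainder theorem and spell out the ring-closure and uniqueness bookkeeping, whereas the paper simply cites Bourbaki (IV.\S 2.8, Prop.\ 8) and leaves those points terse.
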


\begin{proof}
  By ~\cite[IV.\S 2.8, Proposition 8]{Bourbaki1972}, 
  $\Gamma/\Lambda$ can be written as $\Gamma/\Lambda = \bigoplus_{\mathfrak P \in \Ass_{\Lambda}(\Gamma/\Lambda)} T_\mathfrak P$ since it has finite length (it is finite).
  By Lemma~\ref{lem:primary}~(ii) we know that $\Ass_{\Lambda}(\Gamma/\Lambda) \subseteq S$. We now define $T_{\mathfrak P} = \{ 0 \}$
  if $\mathfrak P \in S \setminus \Ass_{\Lambda}(\Gamma/\Lambda)$ and observe that $\Gamma/\Lambda = \bigoplus_{\mathfrak P \in S} T_\mathfrak P$.
  Let $\Gamma_i$ be the preimage of $T_{\mathfrak P_i}$ under the canonical projection $\Gamma \to \Gamma/\Lambda$.
  Since $T_{\mathfrak P}$ is $\mathfrak P$-primary or $\{0\}$, we have $\Gamma_i = \{ x \in \Gamma \mid \mathfrak P^kx \subseteq \Lambda \text{ for some $k$}\}$.
  Note that this also shows that $\Gamma_i$ is closed under multiplication and therefore a $\mathfrak P_i$-overorder of $\Gamma$.
  The uniqueness follows from the unique primary decomposition and the fact that any overorder is a $\Lambda$-module.
\end{proof}

\begin{prop}\label{prop:Pover2}
  Let $\Lambda \subseteq \Gamma_0$ be an overorder and $\Gamma_0 \subseteq \Gamma$ an overorder.
  Then $\Lambda \subseteq \Gamma$ is a $\mathfrak P$-overorder if and only if $\Lambda \subseteq \Gamma_0$ is a $\mathfrak P$-overorder and $\Gamma_0 \subseteq \Gamma$ is an $S$-overorder, where $S = \{ \mathfrak Q \in \Spec(\Gamma_0) \mid \mathfrak Q \cap \Lambda = \mathfrak P\}$.
\end{prop}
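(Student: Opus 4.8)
The plan is to translate all three conditions in the statement, via Lemma~\ref{lem:primary}, into assertions about radicals of annihilator ideals, and then to observe that the claimed equivalence collapses to an elementary fact about the maximal ideal $\mathfrak P$. Throughout, $\mathfrak P$ is a non-zero prime of $\Lambda$, hence a maximal ideal, since it lies over a non-zero prime of $R$ and $\Lambda/\mathfrak P$ is finite over a field. I would first dispose of the degenerate cases: if $\Gamma_0 = \Gamma$ the assertion is immediate, and the case $\Gamma_0 = \Lambda$ is read off directly (or excluded, since $\Lambda$ is not a proper overorder of itself). So assume $\Lambda \subsetneq \Gamma_0 \subsetneq \Gamma$. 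Then $\Gamma/\Lambda$, $\Gamma_0/\Lambda$ and $\Gamma/\Gamma_0$ are non-zero modules of finite length (over $\Lambda$, and $\Gamma/\Gamma_0$ also over $\Gamma_0$), so for each of them the associated primes, the support and $V(\text{annihilator})$ coincide. By Lemma~\ref{lem:primary}~(i), ``$\Lambda \subseteq \Gamma$ is a $\mathfrak P$-overorder'' is then equivalent to $\sqrt{\Ann_\Lambda(\Gamma/\Lambda)} = \mathfrak P$, and likewise ``$\Lambda \subseteq \Gamma_0$ is a $\mathfrak P$-overorder'' is equivalent to $\sqrt{\Ann_\Lambda(\Gamma_0/\Lambda)} = \mathfrak P$.

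Next I would use the short exact sequence of $\Lambda$-modules $0 \to \Gamma_0/\Lambda \to \Gamma/\Lambda \to \Gamma/\Gamma_0 \to 0$, or, what amounts to the same, the inclusions
\[ \Ann_\Lambda(\Gamma_0/\Lambda)\cdot\Ann_\Lambda(\Gamma/\Gamma_0) \subseteq \Ann_\Lambda(\Gamma/\Lambda) \subseteq \Ann_\Lambda(\Gamma_0/\Lambda) \cap \Ann_\Lambda(\Gamma/\Gamma_0), \]
which after taking radicals (using $\sqrt{IJ} = \sqrt{I \cap J} = \sqrt I \cap \sqrt J$) give $\sqrt{\Ann_\Lambda(\Gamma/\Lambda)} = \sqrt{\Ann_\Lambda(\Gamma_0/\Lambda)} \cap \sqrt{\Ann_\Lambda(\Gamma/\Gamma_0)}$. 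Thus the left-hand side of the proposition is equivalent to $\sqrt{\Ann_\Lambda(\Gamma_0/\Lambda)} \cap \sqrt{\Ann_\Lambda(\Gamma/\Gamma_0)} = \mathfrak P$.

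It then remains to identify ``$\Gamma_0 \subseteq \Gamma$ is an $S$-overorder'' with $\sqrt{\Ann_\Lambda(\Gamma/\Gamma_0)} = \mathfrak P$. Here I would use $\Ann_\Lambda(\Gamma/\Gamma_0) = \Ann_{\Gamma_0}(\Gamma/\Gamma_0) \cap \Lambda$, hence $\sqrt{\Ann_\Lambda(\Gamma/\Gamma_0)} = \sqrt{\Ann_{\Gamma_0}(\Gamma/\Gamma_0)} \cap \Lambda$. By Lemma~\ref{lem:primary}~(ii) together with finite length, ``$\Gamma_0 \subseteq \Gamma$ is an $S$-overorder'' says exactly that every prime $\mathfrak Q$ of $\Gamma_0$ containing $\Ann_{\Gamma_0}(\Gamma/\Gamma_0)$ lies in $S$, i.e.\ satisfies $\mathfrak Q \cap \Lambda = \mathfrak P$. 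Every such $\mathfrak Q$ contains $\sqrt{\Ann_{\Gamma_0}(\Gamma/\Gamma_0)}$, and conversely $\sqrt{\Ann_{\Gamma_0}(\Gamma/\Gamma_0)}$ is the intersection of them; since $\mathfrak Q \cap \Lambda$ is always a proper prime of $\Lambda$ and $\mathfrak P$ is maximal, the displayed condition on all the $\mathfrak Q$ is equivalent to $\sqrt{\Ann_{\Gamma_0}(\Gamma/\Gamma_0)} \cap \Lambda = \mathfrak P$, that is, to $\sqrt{\Ann_\Lambda(\Gamma/\Gamma_0)} = \mathfrak P$.

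Granting these three translations, the proposition reduces to the observation that for proper radical ideals $A = \sqrt{\Ann_\Lambda(\Gamma_0/\Lambda)}$ and $B = \sqrt{\Ann_\Lambda(\Gamma/\Gamma_0)}$ of $\Lambda$ and the maximal ideal $\mathfrak P$, one has $A \cap B = \mathfrak P$ if and only if $A = \mathfrak P$ and $B = \mathfrak P$: the implication ``$\Leftarrow$'' is clear, and for ``$\Rightarrow$'', $\mathfrak P = A \cap B \subseteq A \subsetneq \Lambda$ forces $A = \mathfrak P$ by maximality of $\mathfrak P$, and symmetrically $B = \mathfrak P$. The only point that requires genuine care is the identification in the previous paragraph: the $S$-overorder condition is phrased in terms of primes of the intermediate order $\Gamma_0$, and what makes it collapse to a statement about $\Lambda$ is precisely that $\mathfrak P$ is maximal, so that a prime of $\Gamma_0$ whose contraction to $\Lambda$ contains $\mathfrak P$ must contract exactly onto $\mathfrak P$. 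Everything else — finite length, the coincidence of associated primes, support and $V(\Ann)$, the radical identities, and the compatibility of radicals with contraction along $\Lambda \hookrightarrow \Gamma_0$ — is routine.
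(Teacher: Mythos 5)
Your proof is correct and follows the same overall skeleton as the paper's: both hinge on Lemma~\ref{lem:primary} and on splitting $\Gamma/\Lambda$ along the short exact sequence $0 \to \Gamma_0/\Lambda \to \Gamma/\Lambda \to \Gamma/\Gamma_0 \to 0$, then transferring the primary condition on $\Gamma/\Gamma_0$ between the $\Lambda$- and $\Gamma_0$-module structures. The difference is in how that transfer is carried out. The paper argues with associated primes and invokes the (nontrivial, though standard) fact that for a module-finite extension $\Lambda \subseteq \Gamma_0$ the set $\Ass_{\Lambda}(M)$ consists of the contractions of the primes in $\Ass_{\Gamma_0}(M)$. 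You instead work with $\sqrt{\Ann}$, using only the elementary identities $\Ann_{\Lambda}(M) = \Ann_{\Gamma_0}(M) \cap \Lambda$ and $\sqrt{I \cap \Lambda} = \sqrt{I}\cap\Lambda$, together with the maximality of $\mathfrak P$ to force every contraction $\mathfrak Q \cap \Lambda$ with $\mathfrak Q \supseteq \Ann_{\Gamma_0}(\Gamma/\Gamma_0)$ down to $\mathfrak P$. For finite-length modules the two languages are equivalent (support $=$ associated primes $=$ $V(\Ann)$), so the content is the same; your version trades the Bourbaki-style statement about $\Ass$ under ring extensions for a slightly longer but entirely elementary radical/contraction calculation, and makes the role of the maximality of $\mathfrak P$ explicit where the paper leaves it implicit.
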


\begin{proof}
  For the proof we will make repeated use of Lemma~\ref{lem:primary} without mentioning it explicitly.
  Assume first that $\Gamma$ is a $\mathfrak P$-overorder of $\Lambda$. Since $(\Lambda : \Gamma) \subseteq (\Lambda : \Gamma_0)$, also $\Gamma_0$
  is a $\mathfrak P$-overorder of $\Lambda$.
  Now the $\Lambda$-module $\Gamma/\Gamma_0$ is a quotient of $\Gamma/\Lambda$, hence it is also $\mathfrak P$-primary.
  Thus $\Ass_{\Gamma_0}(\Gamma/\Gamma_0) = \{ \mathfrak Q \in \Spec(\Gamma_0) \mid \mathfrak Q \cap \Lambda \in \Ass_{\Lambda}(\Gamma/\Gamma_0)\} = S$.

  Now assume that $\Gamma_0$ is a $\mathfrak P$-overorder of $\Lambda$ and $\Gamma$ is an $S$-overorder of $\Gamma_0$.
  Since $\Ass_{\Gamma_0}(\Gamma/\Gamma_0) \subseteq S$, we have $\Ass_{\Lambda}(\Gamma / \Gamma_0) \subseteq S \cap \Spec(\Lambda) = \{ \mathfrak P\}$. Hence $\Gamma/\Gamma_0$ is a $\mathfrak P$-primary $\Lambda$-module.
  This implies that $\Gamma/\Lambda$ is $\mathfrak P$-primary, that is, $\Gamma$ is a $\mathfrak P$-overorder.
\end{proof}

\begin{corollary}\label{cor:Pover}
  Let $\Gamma_0$ be a $\mathfrak P$-overorder of $\Lambda$ and $\mathfrak Q_1,\dotsc,\mathfrak Q_r$ the set of prime ideals of $\Gamma_0$ lying over $\mathfrak P$.
  If $\Gamma$ is a $\mathfrak P$-overorder of $\Lambda$ containing $\Gamma_0$, then there exist unique $\mathfrak Q_i$-overorders $\Gamma_i$ of $\Gamma_0$ such that
  \[ \Gamma = \Gamma_1 + \Gamma_2 + \dotsb + \Gamma_r. \]
  More precisely, $\Gamma_i = \{ x \in \Gamma \mid \mathfrak Q_i^k x \subseteq \Gamma \text{ for some $k$}\}$.
\end{corollary}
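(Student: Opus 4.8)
The plan is to deduce this immediately from Proposition~\ref{prop:Pover2} and Theorem~\ref{thm:Pdec}, applied with the base order $\Gamma_0$ in place of $\Lambda$. First I would observe that the set $S$ occurring in Proposition~\ref{prop:Pover2} is exactly $\{\mathfrak Q_1,\dotsc,\mathfrak Q_r\}$: by definition, saying $\mathfrak Q_i$ lies over $\mathfrak P$ means $\mathfrak Q_i \cap \Lambda = \mathfrak P$, which is precisely the condition defining $S = \{\mathfrak Q \in \Spec(\Gamma_0) \mid \mathfrak Q \cap \Lambda = \mathfrak P\}$.

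Next, since $\Gamma_0$ is a $\mathfrak P$-overorder of $\Lambda$, $\Gamma$ is a $\mathfrak P$-overorder of $\Lambda$, and $\Lambda \subseteq \Gamma_0 \subseteq \Gamma$, Proposition~\ref{prop:Pover2} shows that $\Gamma_0 \subseteq \Gamma$ is an $S$-overorder. Now I would invoke Theorem~\ref{thm:Pdec} for the order $\Gamma_0$ and the set $S = \{\mathfrak Q_1,\dotsc,\mathfrak Q_r\}$ of prime ideals of $\Gamma_0$: since $\Gamma$ is an $S$-overorder of $\Gamma_0$, the theorem yields unique $\mathfrak Q_i$-overorders $\Gamma_i$ of $\Gamma_0$ with $\Gamma = \Gamma_1 + \dotsb + \Gamma_r$ and $\Gamma_i = \{ x \in \Gamma \mid \mathfrak Q_i^k x \subseteq \Gamma_0 \text{ for some } k\}$. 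This delivers existence, uniqueness and the explicit description in one stroke. (I note that the displayed formula in the statement should read $\mathfrak Q_i^k x \subseteq \Gamma_0$ rather than $\subseteq \Gamma$; with $\Gamma$ on the right the set would trivially equal $\Gamma$. The correct expression describes $\Gamma_i$ as the preimage in $\Gamma$ of the $\mathfrak Q_i$-primary component of $\Gamma/\Gamma_0$.)

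The only points requiring a little care — and where I expect the (minor) obstacle to lie — are verifying that the hypotheses of Theorem~\ref{thm:Pdec} genuinely apply: namely that $\Gamma_0$ is a finite $R$-algebra, so that $\Gamma/\Gamma_0$ has finite length and the primary decomposition used in the proof of Theorem~\ref{thm:Pdec} is available, and that $\Ass_{\Gamma_0}(\Gamma/\Gamma_0) \subseteq \{\mathfrak Q_1,\dotsc,\mathfrak Q_r\}$. The first is part of the standing assumptions on orders, and the second is exactly the $S$-overorder property just established, via Lemma~\ref{lem:primary}~(ii). No new computation is needed beyond assembling these ingredients.
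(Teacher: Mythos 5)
Your proof is correct and follows exactly the paper's own route, which simply cites Proposition~\ref{prop:Pover2} and Theorem~\ref{thm:Pdec}; you have spelled out that deduction carefully, including the identification of $S$ with $\{\mathfrak Q_1,\dotsc,\mathfrak Q_r\}$. Your observation that the displayed formula should read $\mathfrak Q_i^k x \subseteq \Gamma_0$ rather than $\subseteq \Gamma$ is also correct --- this is a typo in the corollary as stated, as one sees by substituting $\Gamma_0$ for $\Lambda$ in Theorem~\ref{thm:Pdec}.
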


\begin{proof}
  Follows from Theorem~\ref{thm:Pdec} and Proposition~\ref{prop:Pover2}.
\end{proof}

This gives us a simple recursive algorithm for computing all $\mathfrak P$-overorders for a given prime ideal $\mathfrak P$ of $\Lambda$.

\begin{algorithm}\label{alg:Pover}
  Let $\mathfrak P$ be a prime ideal of $\Lambda$. The following steps return all $\mathfrak P$-overorders of $\Lambda$.
  \begin{enumerate}
    \item
      Determine the $\Lambda/\mathfrak P$-module $(\mathfrak P : \mathfrak P)/\Lambda$. If $(\mathfrak P : \mathfrak P)/\Lambda = \{ 0 \}$, return $\{ \Lambda \}$.
      Determine the set of minimal overorders $\Gamma_1,\dotsc,\Gamma_r$ of $\Lambda$ with conductor $\mathfrak P$ as described in Remark~\ref{rem:nomeataxe}.
    \item
      For each $1 \leq i \leq r$ do the following:
      \begin{enumerate}
        \item\label{step:onlytwo}
          Determine the set of prime ideals $\mathfrak Q_1,\mathfrak Q_2$ of $\Gamma_i$ lying over $\mathfrak P$ (with possibly $\mathfrak Q_1 = \mathfrak Q_2$; see Proposition~\ref{prop:commin2}).
        \item
          Let $S_j$ be the set of all $\mathfrak Q_j$-overorders of $\Gamma_i$, $1 \leq j \leq 2$, determined
          with Algorithm~\ref{alg:Pover}.
          Compute the set $T_i$ of all $\Gamma_1 + \Gamma_2$, where $(\Gamma_1,\Gamma_2) \in S_1 \times S_2$.
      \end{enumerate}
    \item
      Return $\sum_{1 \leq i \leq r} \Gamma_i$ where $(\Gamma_1,\dotsc,\Gamma_r) \in T_1 \times \dotsb \times T_r$.
  \end{enumerate}
\end{algorithm}

\begin{remark}
  Correctness of Algorithm~\ref{alg:Pover} follows from Corollary~\ref{cor:Pover} and termination from the finiteness of the 
  number of overorders.
  Note that during the recursion, the number of minimal overorders $\Gamma$, for which there are two prime ideals $\Gamma$ lying over $\mathfrak P$ is bounded by $\lceil \log_2(\dim_K(A)) \rceil$. 
  Any ideal $\mathfrak Q$ of $\Gamma$ lying over $\mathfrak P$ clearly lies over $\mathfrak p$.
  In particular, the number of prime ideals $\mathfrak Q$ lying over $\mathfrak P$ is bounded by $\dim_K(A)$.
\end{remark}

In order to use this for the $\mathfrak p$-overorder computation, we will make use of the following result, which states that the poset of $\mathfrak p$-overorders of $\Lambda$
is the cartesian product of the poset of $\mathfrak P_i$-overorders, where $\mathfrak P_1,\dotsc,\mathfrak P_r$ are prime ideals of $\Lambda$ lying over $\mathfrak p$.

\begin{corollary}
  Denote by $\mathfrak p$ a non-zero prime ideal of $R$ and by $\mathfrak P_1,\dotsc,\mathfrak P_r$ the prime ideals of $\Lambda$ lying over $\mathfrak p$.
  Let $\Lambda \subseteq \Gamma$ be an $\mathfrak p$-overorder.
  Then there are unique $\mathfrak P_i$-overorders $\Gamma_{i}$ of $\Lambda$, $i \in \{1,\dotsc,r\}$ such that 
  \[ \Gamma = \Gamma_{1} + \Gamma_{2} + \dotsb + \Gamma_{r}. \]
\end{corollary}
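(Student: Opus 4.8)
\emph{Proof plan.} This is an immediate consequence of Theorem~\ref{thm:Pdec}: the plan is to show that every $\mathfrak p$-overorder $\Gamma$ of $\Lambda$ is automatically an $S$-overorder, where $S = \{\mathfrak P_1,\dotsc,\mathfrak P_r\}$ is the set of all prime ideals of $\Lambda$ lying over $\mathfrak p$, and then to invoke that theorem directly.

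By Lemma~\ref{lem:primary}~(ii), $\Gamma$ is an $S$-overorder precisely when $\Ass_{\Lambda}(\Gamma/\Lambda) \subseteq S$, so the key step is to verify this inclusion. Since $[\Gamma : \Lambda]$ is a $\mathfrak p$-power, the only prime ideal of $R$ dividing $\ord(\Gamma/\Lambda)$ is $\mathfrak p$, hence by Lemma~\ref{lem:dedtor} the finite $R$-module $\Gamma/\Lambda$ equals its $\mathfrak p$-primary part; as it is finitely generated this gives $\mathfrak p^k(\Gamma/\Lambda) = 0$ for some $k$, i.e. $\mathfrak p^k\Lambda \subseteq \Ann_{\Lambda}(\Gamma/\Lambda)$. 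Now any $\mathfrak Q \in \Ass_{\Lambda}(\Gamma/\Lambda)$ contains $\Ann_{\Lambda}(\Gamma/\Lambda)$, hence contains $\mathfrak p^k\Lambda$, and primality of $\mathfrak Q$ forces $\mathfrak p\Lambda \subseteq \mathfrak Q$, that is $\mathfrak p \subseteq \mathfrak Q \cap R$. Since non-zero prime ideals of the Dedekind domain $R$ are maximal, this yields $\mathfrak Q \cap R = \mathfrak p$, so $\mathfrak Q$ lies over $\mathfrak p$ and therefore $\mathfrak Q \in S$. Thus $\Ass_{\Lambda}(\Gamma/\Lambda) \subseteq S$ and $\Gamma$ is an $S$-overorder.

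Applying Theorem~\ref{thm:Pdec} to the $S$-overorder $\Gamma$ then produces unique $\mathfrak P_i$-overorders $\Gamma_i = \{ x \in \Gamma \mid \mathfrak P_i^k x \subseteq \Lambda \text{ for some } k\}$ of $\Lambda$ with $\Gamma = \Gamma_1 + \Gamma_2 + \dotsb + \Gamma_r$, and both existence and uniqueness are part of that statement; this is exactly the claim. I expect no real obstacle here, as the substance is carried entirely by Theorem~\ref{thm:Pdec} and Lemma~\ref{lem:primary}; the only point that genuinely requires an argument is the passage from ``$\mathfrak p$-power index'' to ``all associated primes of $\Gamma/\Lambda$ lie over $\mathfrak p$'', which is the short annihilator computation above, using the structure of finitely generated torsion modules over Dedekind domains (Lemma~\ref{lem:dedtor}).
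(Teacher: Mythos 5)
Your proposal is correct and follows the same route as the paper: reduce to Theorem~\ref{thm:Pdec} by observing that a $\mathfrak p$-overorder is exactly an $S$-overorder for $S = \{\mathfrak P_1,\dotsc,\mathfrak P_r\}$. The paper states this equivalence as a bare fact, whereas you supply the (short) annihilator argument that proves the direction needed, so your write-up is somewhat more detailed but not a different approach.
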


\begin{proof}
  Follows at once from Theorem~\ref{thm:Pdec} and the fact that an overorder $\Gamma$ of $\Lambda$ is a $\mathfrak p$-overorder if and only if it is an $\{\mathfrak P_1,\dotsc,\mathfrak P_r\}$-overorder.
\end{proof}

\begin{algorithm}\label{alg:pover3}
  Let $\mathfrak p$ be a prime ideal of $R$. The following steps return all $\mathfrak p$-overorders of $\Lambda$.
  \begin{enumerate}
    \item
      Determine the prime ideals $\mathfrak P_1,\dotsc,\mathfrak P_r$ of $\Lambda$ lying over $\mathfrak p$.
    \item
      For each $1 \leq i \leq r$ determine the set $S_i$ of $\mathfrak P_i$-overorders of $\Lambda$ using Algorithm~\ref{alg:Pover}.
    \item
      Return $\sum_{1 \leq i \leq r} \Gamma_i$ where $(\Gamma_1,\dotsc,\Gamma_r) \in S_1 \times \dotsb \times S_r$.
  \end{enumerate}
\end{algorithm}

This directly translates into an algorithm for computing all overorders of $\Lambda$, similar to Algorithm~\ref{alg:allover}.

\subsection{Gorenstein and Bass orders}\label{subsec:gor}

We end this section by considering two classes of orders that are particularly simple when computing overorders.

\begin{definition}\label{def:gorenstein}
  Let $\Lambda$ be an order and $\mathfrak P$ a maximal ideal of $\Lambda$.
  We call $\Lambda$ a \textit{Gorenstein order at $\mathfrak P$}, if $\Lambda/\mathfrak P \cong (\Lambda : \mathfrak P)/\Lambda$ as $\Lambda/\mathfrak P$-modules.
  An order $\Lambda$ is a \textit{Gorenstein order} if $\Lambda$ is Gorenstein at all maximal ideals.
\end{definition}

The significance of Gorenstein orders in the context of overorders comes from the following basic observation.

\begin{prop}
  Let $\Lambda$ be an order which is Gorenstein at $\mathfrak P$ and which satisfies $\Lambda \neq (\mathfrak P : \mathfrak P)$. Then
  $\Gamma = (\mathfrak P : \mathfrak P)$ is the unique minimal overorder of $\Lambda$ with
  conductor $\mathfrak P = (\Lambda:\Gamma)$.
\end{prop}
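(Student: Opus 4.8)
The plan is to show two things: first, that $\Gamma = (\mathfrak P : \mathfrak P)$ is genuinely an overorder of $\Lambda$ with conductor exactly $\mathfrak P$; and second, that any minimal overorder of $\Lambda$ with conductor $\mathfrak P$ must coincide with it. The Gorenstein hypothesis enters precisely in the second part, where it forces a dimension count to collapse.

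First I would observe that $(\mathfrak P : \mathfrak P)$ is always a ring containing $\Lambda$, and by the Cayley--Hamilton argument of Lemma~\ref{lem:awesome} it is contained in $\bar\Lambda$; since $\Lambda$ contains a $K$-basis of $A$, so does $(\mathfrak P : \mathfrak P)$, hence it is an order. By construction $\mathfrak P$ is an ideal of $(\mathfrak P : \mathfrak P)$ contained in $\Lambda$, so $\mathfrak P \subseteq (\Lambda : \Gamma)$; as $\Gamma \neq \Lambda$ the conductor is a proper ideal of $\Lambda$, and since $\mathfrak P$ is maximal we get $(\Lambda : \Gamma) = \mathfrak P$. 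So $\Gamma$ is an overorder of $\Lambda$ with conductor $\mathfrak P$; it remains to check it is minimal and that no other minimal overorder has conductor $\mathfrak P$.

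Now let $\Lambda \subsetneq \Delta$ be \emph{any} minimal overorder of $\Lambda$ with conductor $\mathfrak P = (\Lambda : \Delta)$. By Proposition~\ref{prop:commin}~(ii) we have $\Delta \subseteq (\mathfrak P : \mathfrak P) = \Gamma$. I would then compare the two quotients $\Delta/\mathfrak P$ and $\Gamma/\mathfrak P$ as $\Lambda/\mathfrak P$-vector spaces: the inclusion $\Lambda \subseteq \Delta \subseteq \Gamma$ gives $\Lambda/\mathfrak P \subseteq \Delta/\mathfrak P \subseteq \Gamma/\mathfrak P$. By the Gorenstein hypothesis, $\Gamma/\Lambda = (\mathfrak P : \mathfrak P)/\Lambda \cong \Lambda/\mathfrak P$ as $\Lambda/\mathfrak P$-modules, so $\dim_{\Lambda/\mathfrak P}(\Gamma/\Lambda) = 1$. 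On the other hand $\Delta \supsetneq \Lambda$ forces $\dim_{\Lambda/\mathfrak P}(\Delta/\Lambda) \geq 1$. Combining these with $\Lambda \subseteq \Delta \subseteq \Gamma$ gives $1 \le \dim_{\Lambda/\mathfrak P}(\Delta/\Lambda) \le \dim_{\Lambda/\mathfrak P}(\Gamma/\Lambda) = 1$, hence $\Delta/\Lambda = \Gamma/\Lambda$ and therefore $\Delta = \Gamma$. This simultaneously shows $\Gamma$ is minimal (no proper intermediate order with conductor $\mathfrak P$ can sit strictly between $\Lambda$ and $\Gamma$, since any such order would itself contain a minimal overorder, which must equal $\Gamma$) and that it is the unique one with that conductor.

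\textbf{Main obstacle.} The only subtle point is making sure the dimension bookkeeping is over the right field and that ``$\Delta/\mathfrak P$'' makes sense, i.e. that $\mathfrak P$ really is an ideal of $\Delta$ — this is exactly guaranteed by $\mathfrak P = (\Lambda : \Delta)$ together with $\Delta \subseteq (\mathfrak P : \mathfrak P)$. One also needs that every minimal overorder with conductor $\mathfrak P$ really does have $\mathfrak P$ as its conductor in the primary sense used elsewhere, but here the statement already fixes $\mathfrak P = (\Lambda:\Gamma)$, so no primary-decomposition machinery is needed. Everything else is the routine observation that a one-dimensional $\Lambda/\mathfrak P$-vector space sandwiched between two spaces differing by dimension one leaves no room, so I do not expect any real difficulty beyond this.
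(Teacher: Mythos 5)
Your argument is correct and is essentially the paper's proof: establish $\dim_{\Lambda/\mathfrak P}(\Gamma/\Lambda)=1$ from the Gorenstein hypothesis to get minimality, then invoke Proposition~\ref{prop:commin}~(ii) for uniqueness. One small imprecision: the Gorenstein definition gives $(\Lambda:\mathfrak P)/\Lambda \cong \Lambda/\mathfrak P$, not directly $(\mathfrak P:\mathfrak P)/\Lambda \cong \Lambda/\mathfrak P$; you still need the (easy) intermediate observation that $(\mathfrak P:\mathfrak P)/\Lambda$ is a non-zero subspace of the one-dimensional $(\Lambda:\mathfrak P)/\Lambda$, hence equal to it --- the paper spells this step out explicitly.
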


\begin{proof}
  By assumption, the $\Lambda/\mathfrak P$-vector space $\Gamma/\Lambda$ is non-zero and it is a subspace of $(\Lambda : \mathfrak P)/\Lambda$, which has $\Lambda/\mathfrak P$-dimension $1$ as $\Lambda$ is Gorenstein at $\mathfrak P$. Therefore $\Gamma/\Lambda = (\Lambda : \mathfrak P)/\Lambda$ has dimension $1$ as $\Lambda/\mathfrak P$-vector space. This proves that $\Gamma$ is minimal. As every minimal $\mathfrak P$-overorder must be contained in $\Gamma$ by Proposition~\ref{prop:commin}, we get that $\Gamma$ is also unique.
\end{proof}

\begin{remark}
  \hfill
  \begin{enumerate}
\item
Note that in our setting of orders in étale
algebras, by~\cite[Theorem (6.3)]{Bass1963} this definition agrees with the ordinary definition of Gorenstein
rings. 
  The advantage of Definition~\ref{def:gorenstein} is that it immediately translates into an efficient algorithm for testing if an order is Gorenstein at a maximal ideal $\mathfrak P$. This happens if and only if $(\Lambda : \mathfrak P)/\Lambda$ is one-dimensional as a $\Lambda/\mathfrak P$-vector space.
\item
  While computing $\mathfrak P$-primary overorders using Algorithm~\ref{alg:Pover}, the algorithm will automatically recognize Gorenstein orders, since as a first step it will compute the $\Lambda/\mathfrak P$-vector space $(\mathfrak P : \mathfrak P)/\Lambda$.
  Thus checking if an intermediate order is Gorenstein does not add additional overhead to the overall algorithm.
  \end{enumerate}
\end{remark}

While the knowledge of the Gorenstein property of an order alone does not give
any improvement, its full potential is revealed in connection with Bass orders.

\begin{definition}\label{def:bass}
  Let $\Lambda$ be an order and $\mathfrak P$ a maximal ideal of $\Lambda$.
  We say that $\Lambda$ is a \textit{Bass order at $\mathfrak P$}, if $\bar \Lambda/\mathfrak P \bar \Lambda$ has $\Lambda/\mathfrak P$-dimension at most $2$, where $\bar \Lambda$ is the maximal order.
  We call $\Lambda$ a \textit{Bass order}, if $\Lambda$ is Bass at all its maximal ideals.
\end{definition}

  By~\cite[1.1 Proposition, 2.3 Theorem]{Greither1982} this definition coincides with the ordinary definition of Bass orders: an order $\Lambda$ is a Bass order if and only if all overorders $\Gamma$ of $\Lambda$ are Gorenstein.
  Notice that in particular, this implies that every overorder of a Bass order is again Bass.
  The advantage of Definition~\ref{def:bass} via the dimension of $\bar \Lambda/ \mathfrak P \bar \Lambda$ is that, similar to Gorenstein orders, it leads immediately to an
  algorithm for testing if an order is Bass at a maximal ideal $\mathfrak P$: it suffices to compute one extension of the maximal ideal $\mathfrak P$ to the maximal order $\bar \Lambda$.
  
  \begin{remark}\label{rem:binary}
  By~\cite[2.3 Theorem]{Greither1982} a Bass order has \textit{binary branching}: 
  For any prime ideal $\mathfrak P$
  of $\Lambda$ there are at most two prime ideals of $\bar \Lambda$ lying over $\mathfrak P$.
  Thus, during the recursion, the case $\mathfrak Q_1 \neq \mathfrak Q_2$ will happen at most once.
  \end{remark}
  
  \begin{corollary}
    Let $\Lambda$ be an order which is a Bass order at the maximal ideal $\mathfrak P$. Then the following hold:
    \begin{enumerate}
        \item If $\Lambda \neq (\mathfrak P : \mathfrak P)$, then $(\mathfrak P : \mathfrak P)$ is the unique minimal $\mathfrak P$-overorder of $\Lambda$.
        \item
        Either there is only one prime ideal $\mathfrak Q$ of $\Gamma = (\mathfrak P : \mathfrak P)$ lying over $\mathfrak P$, or
        there are two prime ideals $\mathfrak Q_1$, $\mathfrak Q_2$ of $\Gamma$ lying over $\mathfrak P$ and
        for $\mathfrak Q = \mathfrak Q_1, \mathfrak Q_2$ the poset of $\mathfrak Q$-overorders of $\Gamma$ is totally ordered and of the form
        \[ \Gamma \subsetneq (\mathfrak Q : \mathfrak Q) = (\mathfrak Q^{(1)} : \mathfrak Q^{(1)}) \subsetneq (\mathfrak Q^{(2)} : \mathfrak Q^{(2)})\subsetneq \dotsb \subsetneq (\mathfrak Q^{(r)} : \mathfrak Q^{(r)}),\]
        where $\mathfrak Q^{(i)}$ is the unique prime ideal of $(\mathfrak Q^{(i - 1)} : \mathfrak Q^{(i-1)})$ lying over $\mathfrak Q^{(i -1)}$, $2 \leq i \leq r$.
    \end{enumerate}
  \end{corollary}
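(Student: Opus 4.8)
The plan is to deduce everything from Proposition~\ref{prop:commin2}, the Proposition above characterizing the minimal overorder of a Gorenstein order, and the binary branching of Bass orders (Remark~\ref{rem:binary}), organized into a recursion in the spirit of Algorithm~\ref{alg:Pover}. Two auxiliary facts will be used throughout. First, if $\Lambda$ is Bass at $\mathfrak P$, then it is Gorenstein at $\mathfrak P$, and more generally every overorder $\Gamma$ of $\Lambda$ is Bass (hence Gorenstein) at every prime $\mathfrak Q$ of $\Gamma$ lying over $\mathfrak P$; this follows from~\cite{Greither1982}, or directly from $\bar\Gamma = \bar\Lambda$ together with the chain of inequalities $\dim_{\Gamma/\mathfrak Q}(\bar\Lambda/\mathfrak Q\bar\Lambda) \le \dim_{\Lambda/\mathfrak P}(\bar\Lambda/\mathfrak Q\bar\Lambda) \le \dim_{\Lambda/\mathfrak P}(\bar\Lambda/\mathfrak P\bar\Lambda) \le 2$, using $\Lambda/\mathfrak P \subseteq \Gamma/\mathfrak Q$ and $\mathfrak P\bar\Lambda \subseteq \mathfrak Q\bar\Lambda$. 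Second, a minimal $\mathfrak P$-overorder is necessarily a minimal ring extension (any intermediate ring would again be a $\mathfrak P$-overorder), so by Proposition~\ref{prop:commin}(i) its conductor is a prime ideal and hence equal to $\mathfrak P$; conversely, a minimal overorder with conductor $\mathfrak P$ is a minimal $\mathfrak P$-overorder.

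Part (i) is then immediate: by the first fact $\Lambda$ is Gorenstein at $\mathfrak P$, so since $\Lambda \neq (\mathfrak P : \mathfrak P)$ the Gorenstein Proposition above shows that $(\mathfrak P : \mathfrak P)$ is the unique minimal overorder of $\Lambda$ with conductor $\mathfrak P$, which by the second fact is precisely the unique minimal $\mathfrak P$-overorder.

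For part (ii) we may assume $\Lambda \subsetneq \Gamma$, since otherwise $\mathfrak P$ is the only prime of $\Gamma = \Lambda$ lying over $\mathfrak P$ and we are in the first case of the dichotomy. By part (i), $\Lambda \subsetneq \Gamma$ is then a minimal extension with conductor $\mathfrak P$, so Proposition~\ref{prop:commin2} gives the dichotomy: there are at most two primes of $\Gamma$ over $\mathfrak P$. In the second case fix $\mathfrak Q \in \{\mathfrak Q_1, \mathfrak Q_2\}$ and construct the chain recursively: set $\Gamma^{(0)} = \Gamma$ and $\mathfrak Q^{(1)} = \mathfrak Q$; having obtained $\Gamma^{(i-1)}$ and a prime $\mathfrak Q^{(i)}$ of $\Gamma^{(i-1)}$ lying over $\mathfrak Q^{(i-1)}$ (hence over $\mathfrak P$), put $\Gamma^{(i)} = (\mathfrak Q^{(i)} : \mathfrak Q^{(i)})$. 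If $\Gamma^{(i)} = \Gamma^{(i-1)}$ the process stops; otherwise the first fact and part (i) applied to $\Gamma^{(i-1)}$ show that $\Gamma^{(i-1)} \subsetneq \Gamma^{(i)}$ is a minimal extension with conductor $\mathfrak Q^{(i)}$, and the key claim is that $\Gamma^{(i)}$ has a \emph{unique} prime $\mathfrak Q^{(i+1)}$ lying over $\mathfrak Q^{(i)}$. Indeed, if $\Gamma^{(i)}$ had two such primes, then together with a prime of $\Gamma^{(i)}$ lying over the other prime $\mathfrak Q_2$ of $\Gamma$ over $\mathfrak P$ one would obtain three distinct primes of $\Gamma^{(i)}$ over $\mathfrak P$; passing to $\bar\Lambda$, which is also the maximal order of $\Gamma^{(i)}$, this produces at least three primes of $\bar\Lambda$ over $\mathfrak P$, contradicting the binary branching of the Bass order $\Lambda$ (Remark~\ref{rem:binary}). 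Since the overorders of $\Gamma$ are finite in number (Proposition~\ref{lem:index}(iii)), the construction terminates with a chain $\Gamma^{(0)} \subsetneq \Gamma^{(1)} \subsetneq \dotsb \subsetneq \Gamma^{(r)}$.

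It remains to verify that this chain is exactly the poset of $\mathfrak Q$-overorders of $\Gamma$. I would show by induction on $i$ that every $\mathfrak Q$-overorder $\Gamma'$ of $\Gamma$ with $\Gamma' \supsetneq \Gamma^{(i-1)}$ is a $\mathfrak Q^{(i)}$-overorder of $\Gamma^{(i-1)}$, and therefore, by part (i) applied to $\Gamma^{(i-1)}$, contains $\Gamma^{(i)} = (\mathfrak Q^{(i)} : \mathfrak Q^{(i)})$. The base case $i = 1$ is just the hypothesis on $\Gamma'$; for the inductive step one applies Proposition~\ref{prop:Pover2} to the tower $\Gamma^{(i-2)} \subseteq \Gamma^{(i-1)} \subseteq \Gamma'$, where the uniqueness of $\mathfrak Q^{(i)}$ over $\mathfrak Q^{(i-1)}$ forces the set $S$ occurring there to be the singleton $\{\mathfrak Q^{(i)}\}$. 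Walking any $\mathfrak Q$-overorder of $\Gamma$ up this finite chain then shows that it coincides with some $\Gamma^{(j)}$, so the poset of $\mathfrak Q$-overorders is totally ordered and of the stated form. The one genuine obstacle---and the sole place where the Bass hypothesis is used beyond the Gorenstein property---is the uniqueness of $\mathfrak Q^{(i+1)}$ over $\mathfrak Q^{(i)}$, that is, the fact that after the single admissible split into $\mathfrak Q_1, \mathfrak Q_2$ no further branching can occur; the remainder is bookkeeping with Propositions~\ref{prop:commin}, \ref{prop:commin2}, and~\ref{prop:Pover2}.
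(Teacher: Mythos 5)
Your proof is correct and takes essentially the same route as the paper's: part (i) follows from the Gorenstein proposition once you note that Bass implies Gorenstein, and part (ii) reduces the total ordering to the fact (derived from binary branching via the Lying-over theorem) that no further splitting of primes can occur past the first branch. The paper's version of part (ii) is terser---it shows directly that any $\mathfrak Q_i$-overorder $\Gamma'$ of $\Gamma$ containing $(\mathfrak Q_i:\mathfrak Q_i)$ has a unique prime over $\mathfrak Q_i$, arguing via the unique prime of $\bar\Lambda$ over $\mathfrak Q_i$ rather than your three-primes-over-$\mathfrak P$ count, and leaves the recursive assembly of the chain implicit. Your explicit construction of the chain $\Gamma^{(i)}$ together with the induction via Proposition~\ref{prop:Pover2} fills in detail the paper omits, so if anything your write-up is more complete; the key ingredients and their deployment are the same.
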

  
  \begin{proof}
    (i): Clear since $\Lambda$ is Gorenstein at $\mathfrak P$.
    (ii): Assume that there is more than one prime ideal of $\Gamma$ lying over $\mathfrak P$.
    As $\Gamma \subseteq \bar \Gamma$ is integral, by the Lying-over Theorem there are two prime ideals of $\bar \Gamma$ lying over $\mathfrak Q_1$ and $\mathfrak Q_2$ respectively. From Remark~\ref{rem:binary}, it follows that there is a unique prime ideal of $\bar \Gamma$ lying over $\mathfrak Q_i$.
    We now prove the claim by showing that any $\mathfrak Q_i$-overorder $\Gamma'$ of $\Gamma$ with $(\mathfrak Q_i : \mathfrak Q_i) \subseteq \Gamma'$ has only prime ideal lying over $\mathfrak Q_i$.
    Assume on the contrary that $\Gamma'$ has at least two prime ideals $\mathfrak M_1, \mathfrak M_2$ lying over $\mathfrak Q_i$
    Since $\Gamma' \subseteq \bar \Gamma$ is integral, by the Lying-over Theorem there are two prime ideals of $\bar \Gamma$ lying over $\mathfrak M_1$ and $\mathfrak M_2$ respectively. Since $\mathfrak M_i$ is lying over $\mathfrak Q_i$, this is a contradiction.
  \end{proof}
  

Thus for an order $\Lambda$ which is Bass at $\mathfrak P$, the poset of $\mathfrak P$ has a very simple form and can be easily computed as described in the following algorithm.
Note that in this case, the algorithm visits every $\mathfrak P$-overorder of $\Lambda$ exactly once.

\begin{algorithm}\label{alg:PoverBass}
  Assume that $\Lambda$ is a Bass order and $\mathfrak P$ a non-zero prime ideal of $\Lambda$. 
  The following steps return all $\mathfrak P$-primary overorders.
  \begin{enumerate}
    \item
      Let $\Gamma = (\mathfrak P : \mathfrak P)/\Lambda$. If $\Gamma = \{ 0 \}$, return $\{ \Lambda \}$.
    \item
      Determine the prime ideals $\mathfrak Q_1, \mathfrak Q_2$ of $\Gamma$ lying over $\mathfrak P$ (with possibly $\mathfrak Q_1 = \mathfrak Q_2$).
    \item
      Return the set of all $\Gamma_1 + \Gamma_2$, where $\Gamma_i$ is a $\mathfrak Q_i$-overorder of $\Gamma$ for $1 \leq i \leq 2$.
  \end{enumerate}
\end{algorithm}

  The overall strategy for computing $\mathfrak P$-primary overorders of an order $\Lambda$ is now clear.
  We apply Algorithm~\ref{alg:Pover} and check in Step~(ii)~(b) if the overorder $\Gamma_i$ is Bass at a prime ideal $\mathfrak Q_j$. In this case, we use Algorithm~\ref{alg:PoverBass} to determine the set of $\mathfrak Q_i$-overorders of $\Gamma_i$.

\section{Examples}\label{sec:examples}

The algorithms of this paper have been implemented in \textsc{Hecke}~\cite{Fieker2017} in case the base ring $R$ is $\Z$ and
the $A$ is a semisimple $\Q$-algebra.
In the following, the timings were obtained on an Intel Xeon CPU E5-2643.

\subsection{Computation of overorders}

\begin{example}
Let $f = x^3 - 1000x^2 - 1000x - 1000 \in \Z[x]$ and $\Lambda = \Z[x]/(f)$ (see also \cite[Example 7.2]{Marseglia2018})
which is a $\Z$-order of the number field $A = \Q[x]/(f)$ of index $1000$ in the the maximal order $\bar \Lambda$.
Moreover from $\bar \Lambda/\Lambda \cong \Z/10\Z \times \Z/100\Z$ we conclude that there are $112$ subgroups of $\bar \Lambda/\Lambda$.
Among those, only $16$ define overorders of $\Lambda$.
\end{example}
\begin{example}
Let $G = Q_8$ be the quaternion group with 8 elements and consider the group ring $\Z[G]$, which is a non-maximal order of group algebra $\Q[G]$ of dimension 8.
While the group algebra $\Q[G]$ decomposes as $\Q \times \Q \times \Q \times \Q \times H$, where $H$ is the quaternion algebra ramified only at $2$ over $\Q$, the group ring $\Z[G]$ does not contain any of the $2^5$ central idempotents. In particular, it is indecomposable. The index of $\Z[G]$ in any maximal order is $512 = 2^9$. The algorithm computes all the 113 overorders of $\Z[G]$.
\end{example}

\begin{example}\label{example:split}
Consider the irreducible polynomial 
\[ f = x^4-1680x^3-25920x^2-1175040x+25214976 \in \Q[x],\]
the étale $\Q$-algebra $A = \Q[x]/(f\cdot (f - 1))$
and the $\Z$-order $\Lambda = \Z[x]/(f\cdot (f-1))$.
The order $\Lambda$ has index $23887872$ in the maximal order $\bar \Lambda$.
Computing directly the 30,420 overorders using the algorithm for orders in étale $\Q$-algebras takes 280 seconds.
On the other hand, as $(f, f - 1) \cap \Z = \Z$, the order $\Lambda$ is decomposable and we have $\Z[x]/(f \cdot (f - 1)) \cong \Z[x]/(f) \times \Z[x]/(f - 1)$.
Using this decomposition (see Section~\ref{sec:split}) the computation can be performed much quicker within 32 seconds.
\end{example}

\begin{example}\label{example:big}
For $k \in \Z_{\geq 3}$ we define $f_k = x^4 - 5^k(x^3 + x^2 + x + 1)$ and $\Lambda_k = \Z[x]/(f)$.
Note that $f_k$ is irreducible since it is irreducible modulo $2$.
We have used the various algorithms of the previous sections to compute the overorders of $\Lambda$.
The results of the computations are displayed in Table~\ref{tab:comp} and should be read as follows:
\begin{description}[style=multiline,leftmargin=1.8cm,font=\normalfont]
\item[$\lvert \bar \Lambda_k/\Lambda_k \rvert$] The cardinality of $\bar \Lambda_k/\Lambda_k$.
\item[$\#\Gamma$] The number of overorders of $\Lambda_k$.
\item[\#sub] The number of subgroups of $\bar \Lambda_k/\Lambda_k$.
\item[$e_1$] The number of $\Lambda_k$-submodules of $\bar \Lambda_k$ containing $\Lambda_k$, which are not overorders.
\item[$e_2$] The number of $\Lambda_k$-submodules of $\bar \Lambda_k$ containing $\Lambda_k$, which are submodules for every proper suborder, but which are not overorders.
\item[$t_1$] Runtime of computing the overorders of $\Lambda_k$ by computing $\Lambda_k$-submodules of $\bar \Lambda_k$ (Algorithm~\ref{alg:generic}) in seconds.
\item[$t_2$] Runtime of computing the overorders of $\Lambda_k$ by computing successively minimal overorders (Algorithm~\ref{alg:intermediate}) in seconds.
\end{description} 

The - indicates that the computation did not finish.
As expected, the approach via the subgroup enumeration of $\bar \Lambda_k/\Lambda_k$ is in general hopeless,
since the number of subgroups is just too large.
On the other hand, it is also clear that for large examples it is not sufficient to just compute $\Lambda_k$-submodules of $\bar \Lambda_k$.
It really is necessary to traverse the posets of overorders by computing minimal overorders one at a time.
This dramatically reduces the number of useless objects that have to be considered ($e_1$ versus $e_2$). Moreover,
it keeps the order of the groups bounded for which we have to compute stable subgroups.
While we did not provide any runtime estimates, it seems that the algorithm grows linearly in the number of overorders.
This is also supported by the observation that the number $e_2$ of useless objects appears to grow linearly in the number of overorders.
Since the output of the algorithm is a list of bases for all overorders, this is in fact as good as one could hope for.

\begin{table}[ht]
\centering

 \begin{tabular}{c|crrrrrrr}
  $k$ & $\lvert \bar \Lambda_k/\Lambda_k \rvert$ & $\# \Gamma$ & \#sub & $e_1$ & $e_2$ & $t_1$ & $t_2$ \\
    \hline\hline
    2 & $5^2$     & 3         & 8        & 0 & 0 & 0.0009 & 0.0004 \\
    3 & $5^3 \cdot 13$     & 8         & 14       & 5 & 5 & 0.0014 & 0.0011 \\
    4 & $5^6$     & 27        & 732      & 75 & 52 & 0.1876 & 0.0172 \\
    5 & $5^6$     & 17        & 732      & 85 & 35 & 0.1887 & 0.0056 \\
    6 & $5^8$     & 42        & 3,844     & 197 & 77 & 3.0076 & 0.0170 \\
    7 & $5^9$     & 45        & 5,400     & 625 & 150 & 17.0612 & 0.0183 \\
    8 & $5^{12}$  & 240       & 203,193   & - & 964 & - & 0.3433 \\
    9 & $5^{12}$  & 193       & 203,193   & - & 445 & - & 0.1265 \\
   10 & $5^{14}$  & 438       & -        & - & 927 & - & 0.4663 \\
   11 & $5^{15}$  & 441       & -        & - & 1,365 & - & 0.4663 \\
   12 & $5^{18}$  & 2,349      & -        & - & 7,325 & - & 3.0383 \\
   13 & $5^{18}$  & 1,714      & -        & - & 4,510 & - & 1.7554 \\
   14 & $5^{20} \cdot 7$  & 7,522      & -        & - & 9,125 & - & 4.0745 \\
   15 & $5^{21}$  & 3,637      & -        & - & 11,755 & - & 4.0219 \\
   16 & $5^{24}$  & 16,819     & -        & - & 64,955 & - & 25.0070 \\
   17 & $5^{24}$  & 13,810     & -        & - & 37,625 & - & 15.5382 \\
   18 & $5^{26}$  & 29,736     & -        & - & 75,596 & - & 36.2542 \\
   19 & $5^{27}$  & 27,358     & -        & - & 90,120 & - & 35.7604 \\
   20 & $5^{30}$  & 129,020    & -        & - & 428,229 & - & 216.5781 
  \end{tabular}
  \caption{Overorders of $\Lambda_k$}
   \label{tab:comp}
\end{table}
\end{example}
\begin{example}\label{example3}
Consider the order $\Lambda = \Z[x]/(f)$ defined by the polynomial
\begin{align*} f ={} x^5+46627x^4+26241066x^3+2331020454x^2+200947680677x+143628091723623. \end{align*}
The order $\Lambda$ is maximal at all primes except $2$ and $29$, and the primary decomposition of the conductor $(\Lambda : \bar \Lambda) \subseteq \Lambda$
has support $\{\mathfrak P_{2}, \mathfrak P_{29}\}$, where $\mathfrak P_p$ is a prime ideal of $\Lambda$ lying over $p \in \Z$.
For the different prime ideals $\mathfrak P$, the results of computing the $\mathfrak P$-overorders using Algorithm~\ref{alg:Pover} are shown in Table~\ref{tab:comp2}.
We list the number of $\mathfrak P$-overorders, the number $e_2$ of useless objects (as in Example~\ref{example:big}) constructed and the runtime $t$ in seconds.
  
\begin{table}[ht]
\centering

 \begin{tabular}{c|crr}
  $\mathfrak P$ & $\# \Gamma$ & $e_2$ & $t$ \\
    \hline\hline
    $\mathfrak P_{2}$  & 4,027 & 0      & 11.5030 \\
    $\mathfrak P_{29}$ & 1,777 & 870 &  8.1709 \\
  \end{tabular}
  \caption{$\mathfrak P$-overorders of $\Lambda$ from Example~\ref{example3}}
  \label{tab:comp2}
  \end{table}

For the prime ideals lying over $2$, the computation of the unnecessary non-orders is avoided by using Remark~\ref{rem:ptwo}.
Without using Remark~\ref{rem:ptwo}, for $\mathfrak P_2$ the algorithm takes 13.3107 seconds and computes 5779 many objects which are not orders.
Finally the computation of all 7,155,979 orders, that is, the final recombination of the $\mathfrak P$-overorders is the most time consuming part; it takes 277 seconds.
\end{example}

\subsection{Computation of suborders}\label{sec:suborder}

We now consider the closely related problem of computing suborders of a fixed order $\Gamma$.
The idea is to reduce to the problem of computing overorders.

\begin{lemma}\label{lem:suborder}
  Assume that $\mathfrak A$ is a full rank $R$-submodule of $\Gamma$ with $\mathfrak A \cdot \mathfrak A \subseteq \mathfrak A$.
  Then $R + \mathfrak A$ is an order of $A$ that is contained in $\Gamma$.
\end{lemma}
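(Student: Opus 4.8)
The plan is to show that $R + \mathfrak{A}$ is closed under multiplication, contains $1$, is a finitely generated projective $R$-module, and contains a $K$-basis of $A$; the last two properties together make it an order of $A$, and the inclusion in $\Gamma$ is then immediate.

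First I would check the ring structure. The set $R + \mathfrak{A}$ clearly contains $1 \in R$. For closure under multiplication, take $r_1 + a_1$ and $r_2 + a_2$ with $r_i \in R$ and $a_i \in \mathfrak{A}$. Their product is $r_1 r_2 + (r_1 a_2 + r_2 a_1 + a_1 a_2)$. Here $r_1 r_2 \in R$, while $r_1 a_2$ and $r_2 a_1$ lie in $\mathfrak{A}$ because $\mathfrak{A}$ is an $R$-submodule, and $a_1 a_2 \in \mathfrak{A}$ by the hypothesis $\mathfrak{A} \cdot \mathfrak{A} \subseteq \mathfrak{A}$. Hence the product again lies in $R + \mathfrak{A}$, so $R + \mathfrak{A}$ is a subring of $A$.

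Next I would verify that $R + \mathfrak{A}$ is an $R$-order of $A$. Since $\mathfrak{A}$ is a full rank $R$-submodule of $\Gamma$ and $\Gamma$ is itself an order of $A$ (hence a finitely generated $R$-module spanning $A$ over $K$), $\mathfrak{A}$ already contains a $K$-basis of $A$; therefore so does $R + \mathfrak{A}$, which contains $\mathfrak{A}$. For finite generation and projectivity over the Dedekind domain $R$: the module $R + \mathfrak{A}$ is an $R$-submodule of $\Gamma$, and $\Gamma$ is a finitely generated $R$-module; since $R$ is noetherian, every submodule of a finitely generated module is finitely generated, and a finitely generated torsion-free module over a Dedekind domain is projective (here torsion-freeness is inherited from $A$, which is a $K$-vector space). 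This shows $R + \mathfrak{A}$ is a finitely generated projective $R$-module, i.e.\ an $R$-order, and it is an order \emph{of} $A$ by the spanning property just established. Finally, $\mathfrak{A} \subseteq \Gamma$ and $R \subseteq \Gamma$ (as $\Gamma$ is an $R$-algebra containing $1$), so $R + \mathfrak{A} \subseteq \Gamma$.

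I do not expect any genuine obstacle here; the statement is essentially a bookkeeping lemma. The only point requiring a moment's care is the full-rank/$K$-basis step — one must make sure that ``full rank $R$-submodule of $\Gamma$'' really does force $K\mathfrak{A} = A$, which it does precisely because $\Gamma$ spans $A$ and full rank means $\mathfrak{A}$ has the same $R$-rank as $\Gamma$. Everything else is routine closure and noetherian/Dedekind module theory.
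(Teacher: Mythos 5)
Your proof is correct and follows the same approach as the paper: check multiplicative closure, presence of $1$, and the $K$-spanning property via full rank. The paper's own proof is a two-sentence version of this; you simply fill in the routine details (the explicit product calculation, and the fact that submodules of a finitely generated module over the noetherian Dedekind domain $R$ are finitely generated and, being torsion-free, projective), which the paper leaves implicit.
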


\begin{proof}
  Since $\mathfrak A$ has full rank, $R + \mathfrak A$ contains a $K$-basis of $A$.
  Furthermore, by assumption $R + \mathfrak A$ is multiplicatively closed and contains $1$.
\end{proof}

Now assume that we want to find all suborders $\Lambda$ of $\Gamma$ such that the index ideal $[\Gamma : \Lambda]$ is equal to some ideal $\mathfrak a$ of $R$.
Since $\mathfrak a \Gamma \subseteq \Lambda$, Lemma~\ref{lem:suborder} shows that $\Lambda_0 = R + \mathfrak a \Gamma$ is a suborder of $\Lambda$, that is, $\Lambda$ is an intermediate order of $\Lambda_0 \subseteq \Gamma$. Thus we can
apply Algorithm~\ref{alg:intorders} to determine all suborders of $\Gamma$ with index $\mathfrak a$.
Note that using the same idea we can find all suborders $\Lambda$ of $\Gamma$ such that the left conductor ideal
$\{ x \in \Gamma \mid x \Gamma \subseteq \Lambda \}$ is equal to some fixed right ideal of $\Gamma$ (and similar for
the right conductor ideal).

\begin{example}
Consider the polynomial $f = x^5 - x + 1$ and $\Lambda = \Z[x]/(f)$ the corresponding order, which is in fact maximal of discriminant $19 \cdot 151$.
We want to find the smallest prime ideal $\mathfrak P$ of $\Lambda$ (with respect to $\# (\mathfrak P \cap \Z)$) which is not the conductor ideal of a suborder.
Let $\Gamma \subseteq \Lambda$ be a suborder with $(\Gamma : \Lambda)$ a prime ideal of of $\Lambda$. Then $\Z + (\Gamma : \Lambda)$ is a suborder of $\Lambda$ contained in $\Gamma$ (see Section~\ref{sec:suborder}).
Computing the overorders of $\Z + \mathfrak P$ and their conductors for all prime ideals $\mathfrak P$ of $\Lambda$ with $\Lambda \cap \Z = p\Z$ for $p < 17$ shows that all of these prime ideals appear as conductors.
For $p = 17$, there are three prime ideals $\mathfrak P_1 = \langle 17, a^3 + 3a^2 - 5a - 6\rangle$, $\mathfrak P_2 = \langle 17, a + 8\rangle$ and $\mathfrak P_3 = \langle 17, a + 6 \rangle$, where $f(a) = 0$.
Then $\mathfrak P_2$ and $\mathfrak P_3$ are not conductor ideals.
This is in agreement with the characterization of conductor ideals of Furtwängler~\cite{Furtwaengler1919}, which implies that all prime ideals $\mathfrak P$ of degree one, that is, prime ideals $\mathfrak P$ with $\Lambda/\mathfrak P$ of prime cardinality, appear as conductor ideals.

\end{example}

\bibliography{overorders}
\bibliographystyle{amsalpha}

\end{document}